%% file: main_arxiv.tex
\documentclass{article} % For LaTeX2e
\usepackage{amssymb}
\usepackage[a4paper, total={6in, 8in}]{geometry}
\usepackage{bm}
\usepackage{natbib}

\input{math_commands.tex}

\usepackage{hyperref}
\usepackage{url}
\usepackage{mathrsfs}
\usepackage{enumitem}
\usepackage{booktabs}
\usepackage{graphicx}
\usepackage{float}
\usepackage{mathtools}
\usepackage[bottom]{footmisc}
\usepackage{multirow}
\usepackage{amsthm}
\theoremstyle{plain}          % en-tete gras, corps en italique
\newtheorem{theorem}{Theorem}
\newtheorem{lemma}[theorem]{Lemma}
\newtheorem{proposition}[theorem]{Proposition}
\newtheorem{corollary}[theorem]{Corollary}

\theoremstyle{definition}     % en-tete gras, corps droit
\newtheorem{definition}[theorem]{Definition}

\newtheorem{assumption}[theorem]{Assumption}

\theoremstyle{remark}         % en-tete italique, corps droit
\newtheorem{remark}[theorem]{Remark}

\newcommand{\norm}[1]{\lVert#1\rVert}
\newcommand{\ip}[2]{\langle #1, #2\rangle}
\newcommand{\Cost}{\mathrm{Cost}}

\title{Convex Optimization Is Free When Accuracy Is Expensive}

\author{Arthur Paing\\
\'Ecole polytechnique\\
Palaiseau, France\\
\texttt{arthur.paing@polytechnique.edu} \\
\and
Arthur Jacot \\
Courant Institute, NYU \\
New York, USA \\
\texttt{arthur.jacot@nyu.edu}
}

\begin{document}

\maketitle

\begin{abstract}
This paper studies convex optimization when the gradient cannot be evaluated exactly, but only
approximated by a hierarchy of algorithms whose compute grows like $\delta^{-\gamma}$ in the
accuracy $\delta$. When $\gamma>2$, falling into the Harder-Than-Monte-Carlo (HTMC) regime, the price of accuracy outruns the variance reduction that Monte Carlo would buy and we show that minimizing a loss function costs no more, up to a factor depending only on $\gamma$, than
a single evaluation of its gradient at the accuracy the problem demands. A randomized multilevel oracle replaces the deterministic approximation of accuracy $\delta$ by an unbiased estimator of it, whose variance $\sigma^2$ becomes a second, independently priced dial: the cost of one call drops from $\delta^{-\gamma}$ to $\delta^{2-\gamma}\sigma^{-2}$. Plain inexact gradient descent driven by that oracle reaches loss
$\varepsilon$ at expected compute $\Theta(\varepsilon^{-\gamma})$ in the convex
case, against $\Theta(\varepsilon^{-(\gamma+1)})$ for the same method run at a fixed
accuracy: randomization buys a full power of $\varepsilon$. Under $\mu$-strong convexity the
exponent halves, to $\varepsilon^{-\gamma/2}$, because the iterates settle at a
noise floor and the bias budget relaxes accordingly. Both bounds are independent of the step
size, and hence of the smoothness constant, and we show that the cost is a functional of the underlying
gradient flow rather than of any discretization of it.

\end{abstract}

\section{Introduction}
Many optimization problems have a gradient that cannot be
evaluated exactly, only approximated at a cost that grows with the accuracy required: discretized
PDEs, nested simulations, and neural networks trained to approximate a score or a drift all behave
this way. We study convex optimization under such an oracle, in the regime where the cost grows fast
enough so that minimizing
the function costs no more than evaluating its gradient once, with the appropriate oracle. While convex optimization and exact gradient descent have been thoroughly studied, inexact gradient descent in the HTMC regime calls for an adaptation of the existing theory. Randomized multilevel estimators for biased oracles have been studied by
\citet{huwangchenhe2024}, who obtain the same cost exponents in the regime that corresponds
to $\gamma>2$, under assumptions bearing on a hierarchy of approximating \emph{functions}.
Accelerated methods under a jointly biased and noisy oracle are analyzed by
\citet{dvurechensky2016}, and the choice of an inexactness schedule when accuracy has a
price by \citet{vandesselglineur2023}. In both, the oracle exposes a single accuracy knob.
What is new here is an oracle with two, a bias and a variance bought at unrelated
exponents, and the consequence that the iteration count leaves the bill altogether.
Appendix~\ref{app:related} compares the settings in detail.

\section{Setup}

Throughout the rest of this paper, $\mathscr{L}:\R^d\to\R$ is our convex and $\beta$-smooth loss function with a minimizer $y^\ast$, and we
write $\mathscr{L}^\ast=\mathscr{L}(y^\ast)$, $f=\nabla \mathscr{L}$ and $d_0=\norm{y_0-y^\ast}$. In the strongly convex
statements $\mathscr{L}$ is moreover $\mu$-strongly convex with $0<\mu\le\beta$, and $\kappa=\beta/\mu$.

The gradient $f$ is not available exactly. What is available is a sequence of algorithms of
increasing accuracy and increasing compute.

\subsection{The approximate gradient oracle}

\begin{assumption}[Dyadic hierarchy]\label{ass:dyadic}
There is a family of algorithms $A_k$ approximating $f$ within $2^{-k}$, at a
compute growing exponentially in $k$:
\[
\norm{A_k-f}_\infty\le2^{-k},
\qquad
\Cost(A_k)\le c^\gamma2^{\gamma k}.
\]
\end{assumption}

Nothing that follows uses more than Assumption~\ref{ass:dyadic}, so every bound
applies verbatim to any hierarchy of that shape, with $c$ the prefactor of the
hierarchy at hand.

As it stands the assumption offers a single dial: running $A_k$ buys accuracy
$\delta=2^{-k}$ at cost $(c/\delta)^\gamma$. Randomizing which $A_k$ is run turns
that one dial into two, a bias and a variance, priced at unrelated exponents.

What it buys is shown in Table~\ref{tab:bounds}:

\begin{table}[h]
\begin{center}
\setlength{\tabcolsep}{6pt}
\begin{tabular}{llll}
\hline
& \multicolumn{1}{c}{\boldmath $\gamma$} & \multicolumn{1}{c}{\bf CONVEX}
  & \multicolumn{1}{c}{\bf STRONGLY CONVEX}\\
\hline
{\bf Deterministic}
 & Any $\gamma$    & $d_0^{\gamma+2}\,\beta\,\varepsilon^{-(\gamma+1)}$
                   & $(\mu\varepsilon)^{-\frac\gamma2}\,\kappa$\\
\hline
\multirow{3}{*}{\bf GD}
 & ETMC $\gamma<2$ & $d_0^2\,\beta^{1-\frac\gamma2}\,\varepsilon^{-(1+\frac\gamma2)}$
                   & $(\mu\varepsilon)^{-\frac\gamma2}\,\kappa^{1-\frac\gamma2}$\\
 & MC $\gamma=2$   & $d_0^2\,\varepsilon^{-2}\log^2\frac{\beta d_0^2}\varepsilon$
                   & $(\mu\varepsilon)^{-1}\log^2\kappa$\\
 & {\boldmath\bf HTMC $\gamma>2$}
                   & {\boldmath $d_0^\gamma\,\varepsilon^{-\gamma}$}
                   & {\boldmath $(\mu\varepsilon)^{-\frac\gamma2}$}\\
\hline
\multirow{3}{*}{\bf Accelerated}
 & {\boldmath\bf ETMC $\gamma<2$}
                   & {\boldmath $d_0^{1+\frac\gamma2}\beta^{\frac{2-\gamma}4}
                     \varepsilon^{-\frac{2+3\gamma}4}$}
                   & {\boldmath $(\mu\varepsilon)^{-\frac\gamma2}\,\kappa^{\frac{2-\gamma}4}$}\\
 & MC $\gamma=2$   & $d_0^2\,\varepsilon^{-2}\log^2\frac{\beta d_0^2}\varepsilon$
                   & $(\mu\varepsilon)^{-1}\log^2\kappa$\\
 & HTMC $\gamma>2$ & $d_0^{2\gamma-2}\beta^{\frac{\gamma-2}2}
                     \varepsilon^{-\frac{3\gamma-2}2}$
                   & $(\mu\varepsilon)^{-\frac\gamma2}\,\kappa^{\frac{\gamma-2}4}$\\
\hline
\end{tabular}
\caption{Expected compute to reach loss $\varepsilon$, up to constants depending
only on $\gamma$ and with a factor $c^\gamma$ omitted from every entry. The strongly
convex entries carry a further factor
$1+\log(\mu d_0^2/\varepsilon)$. The first two rows run the same iteration and differ only in the oracle driving it, a single $A_k$ held at a fixed level or the randomized telescope of Proposition~\ref{prop:oracle} and the third runs the accelerated scheme on that telescope. Bold marks the smallest bound of each regime, which is
acceleration below the threshold and plain gradient descent above it. }
\label{tab:bounds}
\end{center}
\end{table}

\begin{remark}[Where the hierarchy comes from]\label{rem:norms}
Assumption~\ref{ass:dyadic} is a statement about the HTMC norm of
\citet{jacot2025norms}. Writing $C(f,\varepsilon)$ for the least number of nodes of a
binary circuit approximating $f$ to within $\varepsilon$ in $L^\infty$ over a bounded
hyper-rectangle,
\begin{equation}\label{eq:htmc}
\norm{f}_{M^\gamma}^\gamma=\max_{\varepsilon>0}\ \varepsilon^{\gamma}\,C(f,\varepsilon)
\end{equation}
is the smallest constant for which $C(f,\varepsilon)\le\norm{f}^\gamma_{M^\gamma}
\varepsilon^{-\gamma}$ holds at every accuracy, and reading it at $\varepsilon=2^{-k}$
returns the $A_k$. Assumption~\ref{ass:dyadic} is therefore equivalent to
$\norm{\nabla\mathscr{L}}_{M^\gamma}\le c$ (up to a factor $2$ on $c$ from the dyadic
rounding) which is Assumption~1 of \citet{jacot2026mlem}. Since the bounds carry $c^\gamma$, they are tightest at the smallest admissible $c$,
which is the norm itself.

\end{remark}

 The threshold has a meaning
there as well: $\gamma>2$, \emph{Harder than Monte Carlo}, is where
$\norm{\cdot}_{M^\gamma}$ becomes convex up to a constant
\citep[Thm.~4]{jacot2025norms}, and measured rates sit far inside it,
$\gamma\approx18$ for language models \citep{kaplan2020} and $8$ to $15$ for natural image denoising
\citep{henighan2020}.

\subsection{Upgrading to a weakly biased oracle through randomization}

\begin{proposition}[Upgrading the oracle]\label{prop:oracle}
Let Assumption~\ref{ass:dyadic} hold for some $\gamma>0$, fix $\delta=2^{-k_{\max}}$ and
$\sigma>0$, and let
\begin{equation}\label{eq:mlmc}
\tilde f_{\delta,\sigma}=\sum_{k=k_{\min}}^{k_{\max}}\frac{B_k}{p_k}\big(A_k-A_{k-1}\big),
\qquad
p_k=\min\Big\{C\,2^{-(1+\frac\gamma2)k},\,1\Big\},
\end{equation}
with $B_k\sim\mathrm{Bernoulli}(p_k)$ independent, drawn at each call. Its bias is at most
$\delta$ whatever the $p_k$, and $C$ may be chosen so that the variance is at most
$\sigma^2$, at expected compute
\[
\E\big[\Cost(\tilde f_{\delta,\sigma})\big]\ \asymp\
\begin{cases}
c^\gamma\,\delta^{2-\gamma}\,\sigma^{-2}, & \gamma>2\quad\text{(Harder-Than-Monte-Carlo)},\\[2pt]
c^2\,\sigma^{-2}\log^2\frac\sigma\delta, & \gamma=2\quad\text{(Monte-Carlo)},\\[2pt]
(c/\sigma)^\gamma, & \gamma<2\quad\text{(Easier-Than-Monte-Carlo)},
\end{cases}
\]
up to a factor depending only on $\gamma$, and no other choice of $p_k\in(0,1]$ does
better.
\end{proposition}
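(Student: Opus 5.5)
The plan is to treat \eqref{eq:mlmc} as a randomized telescope and reduce everything to a constrained minimization over the $p_k$. I will use the convention that the coarsest level is run deterministically, i.e.\ $p_k=1$ for $k\le k_{\min}$ with $A_{k_{\min}-1}:=0$, or equivalently that the base term $A_{k_{\min}-1}$ is computed once.

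\textbf{Bias.} Since $\E[B_k/p_k]=1$, linearity gives $\E\tilde f_{\delta,\sigma}=A_{k_{\max}}$ (plus the deterministic base term), whatever the $p_k$. Assumption~\ref{ass:dyadic} then gives $\norm{\E\tilde f_{\delta,\sigma}-f}_\infty\le 2^{-k_{\max}}=\delta$.

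\textbf{Variance and cost.} The summands are independent, so
\[
\mathrm{Var}=\sum_k\Big(\tfrac1{p_k}-1\Big)\norm{A_k-A_{k-1}}^2\le\sum_k\frac{b_k}{p_k},\qquad b_k:=9\cdot 2^{-2k},
\]
using $\norm{A_k-A_{k-1}}\le 2^{-k}+2^{-k+1}$. The expected cost is
\[
\sum_k p_k\,a_k,\qquad a_k:=\Cost(A_k)+\Cost(A_{k-1})\le 2c^\gamma 2^{\gamma k}.
\]
Minimizing $\sum_k p_ka_k$ subject to $\sum_k b_k/p_k\le\sigma^2$ is a Lagrange problem with solution $p_k\propto\sqrt{b_k/a_k}\asymp 2^{-(1+\gamma/2)k}$. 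This explains the shape of $p_k$ in \eqref{eq:mlmc}, with $C$ the multiplier. At the optimum, and by Cauchy--Schwarz as a lower bound for \emph{any} admissible $p_k$,
\[
\E[\Cost]\cdot\sigma^2\;\ge\;\Big(\sum_{k\in K}\sqrt{a_kb_k}\Big)^2\asymp c^\gamma\Big(\sum_{k\in K}2^{(\frac\gamma2-1)k}\Big)^2,
\]
where $K$ is the set of levels on which $p_k<1$. The matching-lower-bound claim (``no other choice does better'') should be read against this worst-case hierarchy, in which the increments and costs saturate the assumption's bounds.

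\textbf{Case analysis.} This is where the three regimes appear.
\begin{itemize}
\item For $\gamma>2$ the geometric sum is dominated by $k=k_{\max}$. This gives $c^\gamma\delta^{2-\gamma}\sigma^{-2}$, while the levels with $p_k=1$ contribute only a lower-order geometric cost.
\item For $\gamma=2$ every level contributes equally. The sum is the number of active levels, from the level where $p_k$ reaches $1$ (about $\log_2(c/\sigma)$, roughly where $2^{-k}\sim\sigma$) up to $k_{\max}$, giving the factor $\log^2(\sigma/\delta)$.
\item For $\gamma<2$ the sum is dominated by its lowest active level. Here the cap $p_k\le1$ binds: $C$ is tuned so that $p_k$ hits $1$ near $2^{-k}\asymp\sigma/c^{\gamma/2}\cdot$const. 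The deterministic levels below cost $\asymp c^\gamma 2^{\gamma k^*}\asymp(c/\sigma)^\gamma$, and the randomized tail contributes the same order.
\end{itemize}

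\textbf{Main obstacle.} The step I expect to be hardest is handling the cap $p_k\le1$ rigorously, in both the upper and lower bounds. For the lower bound I would split the levels into $K$ and its complement. Cauchy--Schwarz applies on $K$, and on the complement each level costs its full $a_k$. I would then show that the sum of the two contributions is minimized, up to $\gamma$-dependent constants, at the threshold $2^{-k}\asymp\sigma$. This matters for $\gamma<2$, and for the endpoint logarithm at $\gamma=2$, including the edge case $\sigma\lesssim\delta$, where everything becomes deterministic.
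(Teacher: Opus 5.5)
Your proposal is correct and follows the paper's proof essentially step for step. It uses the same telescoping argument for the bias, the same triangle-inequality bound $\norm{A_k-A_{k-1}}\le3\cdot2^{-k}$, and the same Lagrange optimum $p_k\propto\sqrt{b_k/a_k}\propto2^{-(1+\gamma/2)k}$ (your Cauchy--Schwarz form is equivalent). It also has the same clamped deterministic prefix, whose cut, optimized against a geometric sum of ratio $2^{\gamma/2-1}$, produces the three branches.

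Two small corrections. First, the clamp binds where $2^{-k}\asymp\sigma$, with no dependence on $c$, since the factors $c^{\pm\gamma/2}$ cancel in $p_k$; this is what your cost $(c/\sigma)^\gamma$ and your count $\log_2(\sigma/\delta)$ actually use, not $\sigma/c^{\gamma/2}$ or $\log_2(c/\sigma)$. Second, in the lower bound take $K=\{k:p_k\le1/2\}$ rather than $\{k:p_k<1\}$. Only then does $\sum_{k\in K}b_k/p_k\le2\sigma^2$ follow from the variance constraint $\sum_k(1/p_k-1)b_k\le\sigma^2$ of the saturating hierarchy; a level with $p_k$ close to $1$ contributes almost no variance.
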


\textit{Sketch of proof.} The expectation of \eqref{eq:mlmc} telescopes to
$A_{k_{\max}}$, since $A_{k_{\min}-1}=0$ and each increment is reweighted by $p_k^{-1}$,
so the bias is $\norm{A_{k_{\max}}-f}\le\delta$ whatever the $p_k$. Everything else
follows from the constraint $p_k\le1$. The optimal $p_k\propto2^{-(1+\gamma/2)k}$
decreases in $k$, so it binds on an initial segment, up to some accuracy $\delta_\ast$.
There the Bernoulli weights are all $1$ and the telescope collapses,
$\sum_{k\le k_\ast}(A_k-A_{k-1})=A_{k_\ast}$: a \emph{single deterministic evaluation} at
accuracy $\delta_\ast$, of compute $(c/\delta_\ast)^\gamma$ and no variance. Optimizing
over where to place the cut,
\begin{equation}\label{eq:split}
\Cost(\delta,\sigma)\ \asymp\ \min_{\delta\le\delta_\ast\le c}
\Big\{\underbrace{\Big(\frac{c}{\delta_\ast}\Big)^{\!\gamma}}_{\text{deterministic prefix}}
+\underbrace{\frac{c^\gamma}{\sigma^2}\,\Sigma^2}_{\text{randomized tail}}\Big\},
\qquad
\Sigma=\!\!\sum_{\delta\le2^{-k}\le\delta_\ast}\!\!2^{(\frac\gamma2-1)k},
\end{equation}
and whether the ratio $2^{\gamma/2-1}$ exceeds one is the entire story. For $\gamma>2$ the
sum is dominated by its finest term, $\Sigma\asymp\delta^{1-\gamma/2}$: the tail does not
depend on $\delta_\ast$ at all while the prefix decreases in it, so the minimum is
attained with no prefix. For $\gamma<2$ it is dominated by its coarsest term instead,
$\Sigma\asymp\delta_\ast^{1-\gamma/2}$ and both terms then move with $\delta_\ast$ in
opposite directions and balance at $\delta_\ast=\sigma$. At $\gamma=2$ the ratio is one
and $\Sigma$ merely counts its terms. Details in Appendix~\ref{app:oracle}. \qed

\paragraph{Optimality of the multilevel oracle} We now show that this method allows us to upgrade from a biased oracle to an oracle that is the least biased possible in the ETMC and HTMC regimes and almost so in the MC regime. The cost of a random estimator with rate $\gamma$ should be of the form 
\[
C(\delta,\sigma)\leq c^{\gamma}\delta^{-\gamma}h\left(\frac{\delta}{\sigma}\right)
\]
for some increasing function $h(r)$, which represents the discount on the cost we get from increasing the variance at a fixed bias, and the faster $h(r)$ goes to $0$ as $r\searrow0$, the stronger this discount. We know that it cannot decay faster than $r^{\gamma}$ (otherwise if we fixed $\sigma$ and decreased $\delta$ the cost would decay to $0$), and it also cannot decay faster than $r^{2}$ (otherwise we could choose a growing variance $n\sigma_{0}^{2}$ and average it $n$ times to obtain a variance of $\sigma_{0}^{2}$ at a cost $n\delta^{-\gamma}h(\frac{\delta}{\sqrt{n}\sigma})$ which vanishes as $n\to\infty$). This implies that the ``least biased oracle'' would have $h(r)=r^{\min\{\gamma,2\}}$ which would be unbiased in the ETMC and MC regimes, and weakly biased in the HTMC regime. Our multilevel oracle matches this optimality in the ETMC and HTMC regimes, but not quite in the MC regime where we obtain $h(r)=r^{2}\log^{2}\frac{1}{r}$ instead of $h(r)=r^{2}$.

\paragraph{What the three branches say.} The exponents $2-\gamma$ and $-2$ of the HTMC
branch are unrelated, so bias and variance are bought separately. Since $2-\gamma<0$ a
\emph{cruder} estimator is a cheaper one, and the whole art of what follows consists in
running as biased an oracle as the geometry permits. Below the threshold that separation
collapses: the tail of \eqref{eq:split} no longer involves $\delta$, so bias becomes free
and the estimator may be taken unbiased, at the price of a single deterministic evaluation
at the tolerated noise level. Multilevel unbiases the oracle there but no longer at the rate $\sigma^{-2}$ that cancels the iteration count, and the advantage over the baseline of Table~\ref{tab:bounds} comes
from computing at that noise level rather than at the accuracy the target demands while
above the threshold it is the telescope that produces it.
At the threshold itself the bias is free only up to a $\log^2(\sigma/\delta)$. This is the one feature the
surrounding literature does not have: oracles whose accuracy can be bought at a price
appear in \citet{vandesselglineur2023}, and biased oracles with noise in
\citet{dvurechensky2016}, but in both the accuracy is a single scalar, and
\citet{vandesselglineur2023} name random inexactness of this kind as an open direction.

\section{Multilevel stochastic gradient descent}
\label{gen_inst}

We can now run plain inexact gradient descent with this oracle,
\begin{equation}\label{eq:iter}
y_{t+1}=y_t-\eta\,\tilde f_{\delta,\sigma}(y_t),\qquad t=0,\dots,T-1,
\end{equation}
with independent draws across steps, and measure the total cost as $T$ times the expected
compute of one call.

\subsection{Optimization costs one gradient evaluation}

\begin{theorem}[Cost of optimization]\label{thm:main}
Let $\mathscr{L}$ be convex and $\beta$-smooth under Assumption~\ref{ass:dyadic} with
rate $\gamma>0$. For every $\varepsilon>0$, and with a step size
\begin{equation}\label{eq:eta}
\eta\ \le\ \min\Big\{\frac{1}{4\beta},\ \frac{d_0^2}{\varepsilon}\Big\}\ \text{ when }\gamma>2,
\qquad\qquad
\eta\ =\ \frac{1}{4\beta}\ \text{ when }\gamma\le2,
\end{equation}
there are hyperparameters $T,\delta,\sigma$, none of which depends on $\gamma$, such that
the iteration \eqref{eq:iter} satisfies the following.
\begin{enumerate}[label=(\alph*),leftmargin=2.2em,itemsep=4pt]
\item \textbf{Convex.} With $T=\lceil\frac{4d_0^2}{\eta\varepsilon}\rceil$,
$\delta=\frac{\varepsilon}{16d_0}$ and $\sigma^2=\frac{\varepsilon}{4\eta}$, the
averaged iterate $\bar y_T=\frac1T\sum_{t<T}y_t$ satisfies
$\E[\mathscr{L}(\bar y_T)-\mathscr{L}^\ast]\le\varepsilon$ at expected cost
\[
\E[\Cost]\ \le\ \Lambda_\gamma\cdot
\begin{cases}
\big(c\,d_0/\varepsilon\big)^{\gamma}, & \gamma>2,\\[2pt]
c^2d_0^2\,\varepsilon^{-2}\log^2\frac{\beta d_0^2}\varepsilon, & \gamma=2,\\[2pt]
c^\gamma d_0^2\,\beta^{1-\frac\gamma2}\,\varepsilon^{-(1+\frac\gamma2)}, & \gamma<2 .
\end{cases}
\]
\item \textbf{Strongly convex.} If $\mathscr{L}$ is moreover $\mu$-strongly convex and
$\varepsilon\le\mu d_0^2$\footnote{Part~(b) is stated under $\varepsilon\le\mu d_0^2$ because that is exactly where its bound
improves on part~(a). Above that accuracy the convex bound is the smaller of the two, and
even a strongly convex objective is governed by $\varepsilon^{-\gamma}$ rather than
$\varepsilon^{-\gamma/2}$: which of the two binds is a property of the accuracy asked for,
not of the objective. Strong convexity also relates loss and distance by
$\varepsilon\asymp\mu\Delta^2$, so above the threshold the bound reads as a guarantee on
the iterate: one reaches $\E\norm{\hat y_T-y^\ast}^2\le\Delta^2$ at cost
$\widetilde O\big((c/(\mu\Delta))^{\gamma}\big)$.}, then with \linebreak
$T=\lceil\frac{4}{\eta\mu}\log\frac{256\mu d_0^2}{\varepsilon}\rceil$,
$\delta=\frac{\sqrt{\mu\varepsilon}}{8}$ and $\sigma^2=\frac{\varepsilon}{8\eta}$, the
geometrically weighted average \linebreak $\hat y_T\propto\sum_{t<T}(1-\eta\mu)^{-t}y_t$ satisfies
$\E[\mathscr{L}(\hat y_T)-\mathscr{L}^\ast]\le\varepsilon$ at expected cost
\[
\E[\Cost]\ \le\ \Lambda'_\gamma\Big(1+\log\frac{\mu d_0^2}\varepsilon\Big)\cdot
\begin{cases}
\big(c/\sqrt{\mu\varepsilon}\big)^{\gamma}, & \gamma>2,\\[2pt]
c^2(\mu\varepsilon)^{-1}\log^2\kappa, & \gamma=2,\\[2pt]
c^\gamma(\mu\varepsilon)^{-\frac\gamma2}\,\kappa^{1-\frac\gamma2}, & \gamma<2 .
\end{cases}
\]
\end{enumerate}
Here $\Lambda_\gamma$ and $\Lambda'_\gamma$ depend only on $\gamma$, and the three branches
do not meet at the threshold (Appendix~\ref{app:etmc}).\\
By contrast, running
\eqref{eq:iter} with the deterministic algorithm $A_{k_{\max}}$ costs
$\Theta\big(\beta d_0^2\varepsilon^{-1}\cdot(cd_0/\varepsilon)^\gamma\big)
=\Theta(\varepsilon^{-(\gamma+1)})$ in the convex case.

\end{theorem}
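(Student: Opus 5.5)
The plan is to combine a standard one-step analysis of biased, noisy gradient descent with the oracle cost of Proposition~\ref{prop:oracle}. The total compute is then just $T$ times the cost of one call at the prescribed $(\delta,\sigma)$. Write $g_t=\tilde f_{\delta,\sigma}(y_t)=f(y_t)+b_t+\xi_t$, where $\norm{b_t}\le\delta$ is the bias and $\xi_t$ is conditionally mean zero with $\E\norm{\xi_t}^2\le\sigma^2$, independent across steps.

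First I will expand $\norm{y_{t+1}-y^\ast}^2$. This gives
\[
\norm{y_{t+1}-y^\ast}^2=\norm{y_t-y^\ast}^2-2\eta\ip{f(y_t)+b_t+\xi_t}{y_t-y^\ast}+\eta^2\norm{g_t}^2 .
\]
I then take conditional expectations and bound the pieces as follows.
\begin{itemize}
\item The cross term in $\xi_t$ vanishes in expectation.
\item Convexity gives $\ip{f(y_t)}{y_t-y^\ast}\ge\mathscr{L}(y_t)-\mathscr{L}^\ast$.
\item Cocoercivity gives $\norm{f(y_t)}^2\le2\beta(\mathscr{L}(y_t)-\mathscr{L}^\ast)$, and together with $\eta\le1/(4\beta)$ this absorbs $\eta^2\norm{f}^2$ into half of the descent term.
\item The remaining quadratic terms are at most $3\eta^2(\delta^2+\sigma^2)$.
\item The bias cross term is bounded by $2\eta\delta\norm{y_t-y^\ast}$.
\end{itemize}
To control $\norm{y_t-y^\ast}$ I will show by induction that $\E\norm{y_t-y^\ast}^2\le 4d_0^2$ on the horizon $t\le T$ (or use $2\eta\delta\norm{y_t-y^\ast}\le\eta\delta(d_0+\norm{y_t-y^\ast}^2/d_0)$ and a Gronwall-type argument). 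This uses that $T\eta\delta\asymp d_0\cdot d_0^2/\varepsilon\cdot\varepsilon/d_0$ stays of order $d_0$, which is where $\delta=\varepsilon/(16d_0)$ enters.

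For part~(a) I sum the one-step inequality over $t<T$ and apply Jensen to $\bar y_T$. This gives
\[
\E[\mathscr{L}(\bar y_T)]-\mathscr{L}^\ast\lesssim \frac{d_0^2}{\eta T}+\delta d_0+\eta\sigma^2+\eta\delta^2 .
\]
With the stated $T,\delta,\sigma$ each term is at most $\varepsilon/4$. The condition $\eta\le d_0^2/\varepsilon$ ensures $\eta\delta^2\le\varepsilon$ and $T\ge1$.

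The cost is $T\cdot\E\Cost(\tilde f_{\delta,\sigma})$, with one regime at a time.
\begin{itemize}
\item \textbf{HTMC.} This is the key cancellation. We have $T\asymp d_0^2/(\eta\varepsilon)$ and $\sigma^{-2}=4\eta/\varepsilon$, so $T\sigma^{-2}\asymp d_0^2/\varepsilon^2$ and $\eta$ cancels. Multiplying by $c^\gamma\delta^{2-\gamma}$ with $\delta\asymp\varepsilon/d_0$ gives $(cd_0/\varepsilon)^\gamma$. The ceiling in $T$ matters only if $\eta\varepsilon>4d_0^2$, which the step-size condition excludes.
\item \textbf{$\gamma<2$.} Here $\eta=1/(4\beta)$ is fixed and $T(c/\sigma)^\gamma\asymp (\beta d_0^2/\varepsilon)(c^2\beta/\varepsilon)^{\gamma/2}$, which matches the stated branch.
\item \textbf{$\gamma=2$.} The factor $\log^2(\sigma/\delta)=\log^2\sqrt{\beta d_0^2/\varepsilon}$ gives the stated bound up to constants.
\end{itemize}
The deterministic comparison replaces the oracle cost by $(c/\delta)^\gamma$ with $\sigma=0$, and multiplies by $T\asymp\beta d_0^2/\varepsilon$. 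Its lower bound follows from needing $\delta\lesssim\varepsilon/d_0$ (a constant bias vector shifts the fixed point) and $T\gtrsim\beta d_0^2/\varepsilon$ (the worst-case GD rate).

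For part~(b), strong convexity adds $-\eta\mu\norm{y_t-y^\ast}^2$ to the one-step bound. Young's inequality on the bias term gives $2\eta\delta\norm{y_t-y^\ast}\le\frac{\eta\mu}2\norm{y_t-y^\ast}^2+2\eta\delta^2/\mu$. I then weight step $t$ by $(1-\eta\mu/2)^{-t}$ (tuning the constant to match the stated $(1-\eta\mu)^{-t}$), telescope, and use Jensen on $\hat y_T$. The result is
\[
\E\big[\mathscr{L}(\hat y_T)-\mathscr{L}^\ast\big]\lesssim\mu d_0^2e^{-\eta\mu T/4}+\delta^2/\mu+\eta\sigma^2 ,
\]
and each term is at most $\varepsilon/3$ with the stated choices. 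The cost now uses $T\sigma^{-2}\asymp\log(\mu d_0^2/\varepsilon)/(\mu\varepsilon)$, again independent of $\eta$. Multiplying by $c^\gamma\delta^{2-\gamma}$ with $\delta\asymp\sqrt{\mu\varepsilon}$ gives $(c/\sqrt{\mu\varepsilon})^\gamma$ times the log. The lower branches follow with $\eta=1/(4\beta)$, where $\sigma/\delta\asymp\sqrt\kappa$ produces the $\log^2\kappa$.

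The main obstacle is the convex bias term $\delta\,\E\norm{y_t-y^\ast}$. Without strong convexity the iterates are not confined, and a crude bound would force a worse $\delta$. I will first try the induction $\E\norm{y_t-y^\ast}^2\le(d_0+2\eta t\delta)^2+2\eta^2t\sigma^2$. With the chosen constants this is $\le 4d_0^2$ on the horizon $T$, so the bias contributes at most $2\delta d_0\le\varepsilon/8$.
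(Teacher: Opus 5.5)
Your part (a) is the paper's argument: the same one-step inequality, the same confinement $\E\norm{y_t-y^\ast}^2\le4d_0^2$ by induction, summation and Jensen, and the cancellation of $\eta$ between $T\propto\eta^{-1}$ and $\sigma^{-2}\propto\eta$.

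Part (b) takes a genuinely different route. The paper telescopes the recursion with its full $(1-\eta\mu)$ contraction and keeps the bias linear. The bias therefore enters the final bound as $2\delta\bar a_w$, with $\bar a_w$ the weighted mean of $a_t=(\E\norm{y_t-y^\ast}^2)^{1/2}$. Showing $\bar a_w<2\sqrt{\varepsilon/\mu}$ then needs a separate noise-floor lemma ($a_t\le\rho^{t/2}d_0+V$, the only place Young's inequality is used) and a split of the weights at $T/2$.

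You apply Young inside the telescoped recursion itself, so the bias contributes $2\delta^2/\mu$ directly. This needs no noise floor and no estimate of $\bar a_w$, gives smaller constants, and makes it immediate why $\delta\asymp\sqrt{\mu\varepsilon}$ suffices: the bias enters the loss quadratically.

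What the paper's route buys is that its telescoping weights are exactly the prescribed ones. Yours are not. After absorbing $\frac{\eta\mu}2a_t^2$ the contraction is $1-\frac{\eta\mu}2$. Weighting by $(1-\eta\mu)^{-(t+1)}$ instead leaves a factor $\frac{1-\eta\mu/2}{1-\eta\mu}>1$ in front of $(1-\eta\mu)^{-t}a_t^2$ at every step, so the sum no longer telescopes and ``tuning the constant'' does not work as such.

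The repair is to pay for the absorption out of the loss term. Split the $-\eta(\mathscr{L}(y_t)-\mathscr{L}^\ast)$ of your one-step bound into two halves. Bound one half by $-\frac{\eta\mu}4\norm{y_t-y^\ast}^2$ using $\mathscr{L}-\mathscr{L}^\ast\ge\frac\mu2\norm{\cdot-y^\ast}^2$, and use $2\eta\delta a_t\le\frac{\eta\mu}4a_t^2+\frac{4\eta\delta^2}\mu$. This gives
\[
a_{t+1}^2\le(1-\eta\mu)\,a_t^2-\frac\eta2\,\E\big[\mathscr{L}(y_t)-\mathscr{L}^\ast\big]+\frac{4\eta\delta^2}{\mu}+\eta^2\tilde\sigma^2,
\qquad\tilde\sigma^2=\sigma^2+2\delta^2 .
\]
Weighting by $(1-\eta\mu)^{-(t+1)}$ now telescopes and yields $\E[\mathscr{L}(\hat y_T)-\mathscr{L}^\ast]\le\frac{2d_0^2}{\eta W_T}+\frac{8\delta^2}\mu+2\eta\tilde\sigma^2$. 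With the stated $\delta,\sigma,T$ this is at most $\frac\varepsilon8+\frac\varepsilon4+\frac\varepsilon{64}$ plus a negligible contraction term, for exactly the average $\hat y_T$ of the statement.

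Since the theorem fixes the constants, the bookkeeping in (a) also needs tightening.
\begin{itemize}
\item With your coefficient $3\eta^2(\delta^2+\sigma^2)$, the noise term after dividing by $\eta T$ is $3\eta\sigma^2=\frac{3\varepsilon}4$. Together with the horizon term $\frac\varepsilon4$ this already exhausts $\varepsilon$. Use instead the conditional decomposition $\E[\norm{g_t}^2\mid y_t]=\norm{f(y_t)+b_t}^2+\E[\norm{\xi_t}^2\mid y_t]$, which puts $\sigma^2$ at coefficient one and gives the $\tilde\sigma^2$ above.
\item Your explicit bound $(d_0+2\eta t\delta)^2+2\eta^2t\sigma^2$ reaches $\frac{17}4d_0^2$ at $t=T$, not $4d_0^2$. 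This is harmless, since a radius slightly above $2d_0$ still leaves room. The paper's strong induction, which inserts $a_s\le2d_0$ into the bias and sums, closes at $3d_0^2+\frac{d_0^2}{32}$.
\item The bound $a_t\le2d_0$ makes the bias term $2\delta\bar a\le4\delta d_0=\frac\varepsilon4$, not $2\delta d_0$.
\end{itemize}
With these corrections the loss bound is $\frac\varepsilon4+\frac\varepsilon4+\frac\varepsilon4+\frac\varepsilon{128}<\varepsilon$, as in the paper.
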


\textit{Sketch of proof.}
We follow the classical route. Write $a_t^2=\E\norm{y_t-y^\ast}^2$ and expand
$\norm{y_{t+1}-y^\ast}^2=\norm{y_t-y^\ast-\eta\tilde f_{\delta,\sigma}(y_t)}^2$,
conditioning on $y_t$ so that the centered part of the oracle drops out. The three
remaining terms are handled by $\mu$-strong convexity, with $\mu=0$ in the merely
convex case, by Cauchy--Schwarz for the bias, and by
$\norm{\nabla\mathscr{L}}^2\le2\beta(\mathscr{L}-\mathscr{L}^\ast)$ for the
smoothness. With $\eta\le\frac1{4\beta}$ this yields the master recursion
\begin{equation}\label{eq:master}
a_{t+1}^2\ \le\ (1-\eta\mu)\,a_t^2-\eta\,\E\big[\mathscr{L}(y_t)-\mathscr{L}^\ast\big]
+2\eta\delta\,a_t+\eta^2\tilde\sigma^2,
\qquad \tilde\sigma^2=\sigma^2+2\delta^2 .
\end{equation}
Both cases start from \eqref{eq:master} and differ only in how it is summed.

\paragraph{Convex case.} Set $\mu=0$ and sum over $t<T$: the squared distances
telescope to at most $d_0^2$, while a separate induction (Appendix~\ref{app:main})
shows that the iterates never leave the ball $a_t\le2d_0$, which is what controls
the bias term. Dividing by $\eta T$ and applying Jensen to $\bar y_T$ leaves three
terms,
\[
\E\big[\mathscr{L}(\bar y_T)-\mathscr{L}^\ast\big]\ \le\
\underbrace{\frac{d_0^2}{\eta T}}_{\text{horizon}}+\underbrace{4\delta d_0}_{\text{bias}}
+\underbrace{\eta\tilde\sigma^2}_{\text{noise}},
\]
and the announced $T,\delta,\sigma$ are the split of $\varepsilon$ between them.
The cost is $T$ times the per-call compute of Proposition~\ref{prop:oracle}, and above the threshold the two powers of $\eta$ cancel: $T\propto\eta^{-1}$ against $\sigma^{-2}\propto\eta$.

\paragraph{Strongly convex case.} Weighting \eqref{eq:master} by $(1-\eta\mu)^{-(t+1)}$
concentrates the weights on the phase where the iterates have settled within a \emph{noise
floor} $V\asymp\delta/\mu+\sqrt{\eta\tilde\sigma^2/\mu}$ of the optimum. The bias is then
measured against $V$ and not $d_0$, which relaxes its budget to $\sqrt{\mu\varepsilon}$ and
halves the exponent. Appendix~\ref{app:main} gives the details.

The same proof gives the other two rows of Table~\ref{tab:bounds}: only the per-call
compute of Proposition~\ref{prop:oracle} changes, and below the threshold it no longer
cancels against the horizon.

\medskip
\begin{corollary}[The step size]\label{cor:eta}
Since $T\propto\eta^{-1}$ and the tolerable variance $\sigma^2\propto\eta^{-1}$, the total
compute carries $T\sigma^{-s}\propto\eta^{s/2-1}$: \textbf{the step size cancels if and
only if $s=2$}, that is, if and only if $\gamma>2$. Below the threshold the exponent is
negative, so one takes $\eta$ maximal and the smoothness constant returns.
\end{corollary}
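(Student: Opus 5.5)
We prove Corollary~\ref{cor:eta} by composing the iteration count and tolerable variance from Theorem~\ref{thm:main}(a) with the per-call cost of Proposition~\ref{prop:oracle}, then reading off the exponent of $\eta$. The tolerable variance is $\sigma^2=\varepsilon/(4\eta)\propto\eta^{-1}$. Part~(b) behaves the same way, since $T\propto(\eta\mu)^{-1}\log(\cdot)$ and $\sigma^2=\varepsilon/(8\eta)$. At fixed $\varepsilon,d_0,\mu$ we therefore have $T\propto\eta^{-1}$ and $\sigma\propto\eta^{-1/2}$. The bias level $\delta$ does not involve $\eta$ in either part, so any factor of $\delta$ in the per-call cost is a constant for this purpose.

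The next step is to record, for each branch of Proposition~\ref{prop:oracle}, the exponent $s$ with which the per-call compute scales in $\sigma$, i.e.\ $\E[\Cost]\propto\sigma^{-s}$ at fixed $\delta$.
\begin{itemize}
\item For $\gamma>2$ the cost is $c^\gamma\delta^{2-\gamma}\sigma^{-2}$, so $s=2$.
\item For $\gamma<2$ the cost is $(c/\sigma)^\gamma$, so $s=\gamma<2$.
\item For $\gamma=2$ the cost is $\sigma^{-2}\log^2(\sigma/\delta)$. This is $s=2$ up to a logarithmic correction that still depends on $\eta$, so $\eta$ does not cancel exactly.
\end{itemize}
Multiplying gives a total compute
\[
T\,\sigma^{-s}\ \propto\ \eta^{-1}\,\eta^{s/2}=\eta^{s/2-1}.
\]
This is independent of $\eta$ exactly when $s=2$. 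That happens exactly in the HTMC branch, with the MC case excluded because of its $\log^2$ factor in $\eta$. This proves the ``if and only if''.

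For $\gamma<2$ the exponent $s/2-1=\gamma/2-1$ is negative, so the compute decreases in $\eta$ and we take $\eta$ as large as the theorem permits, namely $\eta=1/(4\beta)$. Substituting gives $\eta^{\gamma/2-1}=(4\beta)^{1-\gamma/2}$, which reproduces the factor $\beta^{1-\gamma/2}$ in the ETMC branch of Theorem~\ref{thm:main}. This is the sense in which ``the smoothness constant returns''. For $\gamma>2$ the upper restriction $\eta\le d_0^2/\varepsilon$ in \eqref{eq:eta} only guarantees $T\ge 4$, so it does not reintroduce $\beta$.

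The main obstacle is bookkeeping rather than analysis. The ceiling in $T=\lceil\cdot\rceil$ must not break the proportionality, which we handle by bounding $\lceil x\rceil\le 2x$ for $x\ge1$. We also need the regime of Proposition~\ref{prop:oracle} that applies to be the same for all admissible $\eta$. In the HTMC branch this requires $\sigma\ge\delta$, i.e.\ the randomized tail actually lies below the cut. We check this from $\sigma^2=\varepsilon/(4\eta)\ge\varepsilon^2/(4d_0^2)$, using $\eta\le d_0^2/\varepsilon$, compared with $\delta^2=\varepsilon^2/(256 d_0^2)$.
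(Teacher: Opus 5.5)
Your proposal is correct and follows the paper's own argument. You combine $T\propto\eta^{-1}$ and $\sigma^2\propto\eta^{-1}$ from Theorem~\ref{thm:main} with the per-call exponent $s=\min(2,\gamma)$ of Proposition~\ref{prop:oracle}, exclude $\gamma=2$ because the $\log^2(\sigma/\delta)$ factor still depends on $\eta$, and take $\eta=\frac1{4\beta}$ below the threshold to recover $\beta^{1-\gamma/2}$; your extra bookkeeping is a sound refinement the paper leaves implicit, namely the ceiling and the check $\sigma\ge8\delta$, which keeps the HTMC branch in its non-degenerate, two-sided regime.
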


\begin{center}
\setlength{\tabcolsep}{6pt}\begin{tabular}{lccc}
\multicolumn{1}{c}{\bf REGIME} &\multicolumn{1}{c}{\bf $s$}
&\multicolumn{1}{c}{\bf FACTOR IN $\eta$} &\multicolumn{1}{c}{\bf $\eta$ USED}
\\ \hline
ETMC, $\gamma<2$ & $\gamma$ & $\eta^{\gamma/2-1}$ & $\frac1{4\beta}$, maximal\\
MC, $\gamma=2$   & $2$      & $\log^2(1/\eta)$    & $\frac1{4\beta}$\\
HTMC, $\gamma>2$ & $2$      & none                & free\\
\end{tabular}
\end{center}

The limits in Table~\ref{tab:bounds} are also
consistent at both ends:
\begin{itemize}[leftmargin=1.5em,itemsep=2pt,topsep=3pt]
\item \emph{as $\gamma\to0$}, the prefix swallows the hierarchy and the bounds
reduce to $\beta d_0^2/\varepsilon$ and $\kappa\log(1/\varepsilon)$, the
complexities of exact gradient descent
\item \emph{as $\gamma\to2$}, the convex exponent $1+\frac\gamma2$ meets $\gamma$
from the other side. The bounds themselves do not meet: the $\log^2$ at the
threshold is what the constant $C_\gamma\asymp(\gamma-2)^{-2}$ of
Proposition~\ref{prop:oracle} becomes at finite accuracy.
\end{itemize}

\noindent
The two regimes call for qualitatively different methods. Below the threshold the compute
grows as $\eta$ shrinks, so one wants the largest stable step: a classical stochastic
gradient descent. Above it the cancellation is exact rather than an artifact of the
constants (halving the step doubles the number of iterations and halves the price of
each) so $\beta$, which enters only through $\eta\le\frac1{4\beta}$, is absent from
those bounds: \textbf{the cost is a functional of the underlying gradient flow rather than
of any discretization of it}. 

There is then no reason to take $\eta$ maximal, as one would
with an exact gradient: a smaller step is free. The same phenomenon occurs for the
multilevel Euler--Maruyama method, whose $\eta\searrow0$ limit is a Poisson jump process
\citep{jacot2026mlem}. Nothing prevents taking that limit here either: the iteration
becomes a flow, $\delta$ and $\sigma$ become functions of time, and scheduling them
removes the logarithm. 

A consequence of the above, is that in the HTMC regime our complexity bounds are independent of the smoothness parameter $\beta$. Actually, the smoothness assumption might be unnecessary in this regime\footnote{Proving this would require describing the convergence of our algorithm to a multilevel Poisson jump process as $\eta\searrow 0$, and adapting the proofs to subgradients.}. This means that in the HTMC regime, our method can be applied without changes to LASSO problems \cite{tibshirani1996_LASSO} or other non-smooth losses, in contrast to the setting of exact oracles where special optimization algorithms \cite{efron2004_LARS} are needed to solve such problems.

\bigskip
\begin{proposition}[Time-varying accuracy]\label{prop:schedule}
Let $\mathscr{L}$ be $\mu$-strongly convex and $\beta$-smooth under Assumption~\ref{ass:dyadic}, and
let $\varepsilon>0$. Consider the small-step limit of \eqref{eq:iter}, run over a flow time
$T_c\ge\frac2\mu\log\frac{\mu d_0^2}{\varepsilon}$ with the time-varying accuracies
\[
\delta(t)=\sqrt{\frac{\mu\varepsilon}{8}}\ e^{\mu(T_c-t)/8},
\qquad
\eta\,\sigma^2(t)=\frac\varepsilon4\ e^{\mu(T_c-t)/4} .
\]
Then the weighted average $\hat y\propto\int_0^{T_c}e^{\mu t/2}y_t\,dt$ satisfies
$\E[F(\hat y)-F^\ast]\le\varepsilon$ at expected compute
\[
\E[\Cost]\ \le\ \Lambda''_\gamma\Big(\frac{c}{\sqrt{\mu\varepsilon}}\Big)^{\gamma},
\]
with no logarithmic factor and independently of the horizon $T_c$.
\end{proposition}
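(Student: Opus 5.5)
The plan is to redo the strongly convex branch of Theorem~\ref{thm:main} in continuous time, now with $\delta$ and $\sigma$ depending on $t$, and to check that the per-unit-time cost integrates to a geometric series rather than to a sum of $T$ equal terms. We start from the master recursion \eqref{eq:master} with time-dependent $\delta_t,\sigma_t$ and pass to the small-step limit with $\eta\sigma^2(t)$ held fixed. Writing $a(t)^2=\E\norm{y_t-y^\ast}^2$ and $e(t)=\E[\mathscr{L}(y_t)-\mathscr{L}^\ast]$, we obtain the differential inequality
\[
\tfrac{d}{dt}a^2\ \le\ -\mu a^2-e(t)+2\delta(t)\,a+\eta\tilde\sigma^2(t),
\]
where $\tilde\sigma^2=\sigma^2+2\delta^2$ and the $\eta\delta^2$ part vanishes in the limit.

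\emph{Step 1: handle the bias term.} Use Young's inequality in the form $2\delta a\le\frac\mu2a^2+\frac{2}{\mu}\delta^2$. This leaves
\[
\tfrac{d}{dt}a^2\le-\tfrac\mu2a^2-e+\tfrac2\mu\delta^2+\eta\sigma^2 .
\]
This is why the weight is $e^{\mu t/2}$. Multiply by $e^{\mu t/2}$ and integrate over $[0,T_c]$. Convexity and Jensen then give
\[
\E[\mathscr{L}(\hat y)-\mathscr{L}^\ast]\le \frac{d_0^2+\int_0^{T_c}e^{\mu t/2}\big(\tfrac2\mu\delta^2+\eta\sigma^2\big)dt}{\int_0^{T_c}e^{\mu t/2}dt}.
\]
The denominator is $\asymp\frac2\mu e^{\mu T_c/2}$. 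The $d_0^2$ term contributes at most $\frac\mu2d_0^2e^{-\mu T_c/2}\lesssim\varepsilon$ by the lower bound on $T_c$. Under the schedule, the source terms are $\frac{\varepsilon}{4}e^{\mu(T_c-t)/4}$ each, so the integrand is $\propto\varepsilon\,e^{\mu t/4+\mu T_c/4}$. Its integral is $\lesssim\frac{4}{\mu}\varepsilon\,e^{\mu T_c/2}$, and after division this gives a constant times $\varepsilon$. The constants $8$ and $4$ are tuned so the total is at most $\varepsilon$; we would verify them, or rescale if needed.

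\emph{Step 2: cost.} In the limit, the cost per unit flow time is $\eta^{-1}$ calls times the per-call compute of Proposition~\ref{prop:oracle}, namely $c^\gamma\delta^{2-\gamma}\sigma^{-2}$ for $\gamma>2$. It equals $c^\gamma\delta(t)^{2-\gamma}/(\eta\sigma^2(t))$, which is indeed $\eta$-free. Substituting gives
\[
c^\gamma(\mu\varepsilon)^{\frac{2-\gamma}2}\varepsilon^{-1}\,e^{\mu(T_c-t)\left[\frac{2-\gamma}{8}-\frac14\right]}
=c^\gamma(\mu\varepsilon)^{\frac{2-\gamma}2}\varepsilon^{-1}e^{-\gamma\mu(T_c-t)/8}.
\]
Integrating over $t\in[0,T_c]$ yields at most $\frac{8}{\gamma\mu}$ times the prefactor, uniformly in $T_c$. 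The result is $\frac{8}{\gamma}c^\gamma(\mu\varepsilon)^{-\gamma/2}$, with no logarithm. Since the exponent $-\gamma/8$ is negative for every $\gamma>0$, the same computation also works in the other regimes, with their per-call costs. The statement, however, is the HTMC bound, which I will prove; below the threshold one also caps $\delta$ by $c$ via the deterministic prefix.

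\emph{Main obstacle.} The delicate step is justifying the small-step limit. The cost per unit time must be exactly $\eta^{-1}$ times the per-call cost, even though $\sigma^2(t)\propto\eta^{-1}$ blows up. The discrete recursion must also pass to the ODE inequality uniformly. I would first avoid the limit altogether: run the discrete scheme with $\delta_t,\sigma_t$ piecewise constant on steps, and sum \eqref{eq:master} with weights $(1-\eta\mu/2)^{-t}$. This replaces the integrals by geometric sums whose constants converge as $\eta\to0$, and the bound then holds for all small $\eta$. A secondary point is that $\delta(t)$ is not dyadic. Rounding $k_{\max}$ changes the costs by a factor $2^{\gamma}$, which is absorbed into $\Lambda''_\gamma$.
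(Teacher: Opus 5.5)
Your architecture is the paper's: absorb the bias by Young's inequality, weight by $e^{\mu t/2}$, integrate, apply Jensen, and note that the compute per unit of flow time, $C_\gamma c^\gamma\delta(t)^{2-\gamma}/(\eta\sigma^2(t))$, decays like $e^{-\gamma\mu(T_c-t)/8}$. It therefore integrates to at most $\frac{8}{\gamma\mu}$ times its prefactor, whatever $T_c$ is. That cost half is exactly the paper's Step~4 and is fine.

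The accuracy half does not close with the stated schedule and horizon, and the constants $8$ and $4$ are not the cause. Your differential inequality carries $-e(t)$ because you imported it from \eqref{eq:master}. There, half of $-2\eta\,\E[\mathscr{L}(y_t)-\mathscr{L}^\ast]$ was spent absorbing $\eta^2\norm{f(y_t)}^2\le2\beta\eta^2(\mathscr{L}(y_t)-\mathscr{L}^\ast)$. With that coefficient, write $x=e^{\mu T_c/2}$ and $W=\int_0^{T_c}e^{\mu t/2}dt=\frac2\mu(x-1)$. Your own bound then evaluates exactly to
\[
\E[\mathscr{L}(\hat y)-\mathscr{L}^\ast]\ \le\ \frac{\mu d_0^2}{2(x-1)}+\varepsilon\,\frac{\sqrt x}{1+\sqrt x}.
\]
At the smallest admissible horizon $x=\mu d_0^2/\varepsilon$ this exceeds $\varepsilon$ whenever $\mu d_0^2>\varepsilon$, and it tends to $\frac32\varepsilon$. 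It drops below $\varepsilon$ only once $T_c\ge\frac4\mu\log\big(1+\frac{\mu d_0^2}{2\varepsilon}\big)$. So ``rescaling'' would prove a different statement.

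The missing observation is that the absorbed term is $O(\eta)$ per unit of flow time and vanishes in the limit, exactly like the $\eta\delta^2$ you did discard. The correct coefficient is therefore $-2e(t)$, which the paper gets directly from It\^o's formula. The left-hand side becomes $2\E[\mathscr{L}(\hat y)-\mathscr{L}^\ast]$ against the same right-hand side, which is below $2\varepsilon$ as soon as $x\ge\max\{2,\mu d_0^2/\varepsilon\}$. Keep $W$ exact when you do this: the lower bound $W\ge\frac1\mu e^{\mu T_c/2}$ doubles the injection term to $2\varepsilon$, and the bound again fails to close.

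The same correction rescues your discrete fallback. Sum the one-step inequality with the coefficient $-2\eta(1-2\beta\eta)$ from the proof of Lemma~\ref{lem:onestep}, not the $-\eta$ of the master recursion. The continuous bound then sits at about $0.8\,\varepsilon$ at worst, and that slack absorbs the discretization errors for small $\eta$. That route is more careful than the paper's, which posits the limiting equation rather than deriving it.

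Finally, drop the remark that the computation carries over below the threshold. There the compute per unit flow time is $\eta^{-1}(c/\sigma)^\gamma=c^\gamma\eta^{\frac\gamma2-1}(\eta\sigma^2)^{-\gamma/2}$, which diverges as $\eta\to0$ at fixed $\eta\sigma^2$ (Corollary~\ref{cor:eta}). At $\gamma=2$ the factor $\log^2(\sigma/\delta)$ diverges instead. The small-step limit is free only for $\gamma>2$, which is the only case the paper's proof covers.
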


Schedules of this kind are studied by \citet{vandesselglineur2023} for a deterministic
oracle whose cost grows as $\delta^{-r}$, ours included, with the optimal sequence of
accuracies in closed form. What is scheduled here is the \emph{pair}
$(\delta(t),\sigma(t))$ of a randomized oracle, and the gain is of another nature: not a
better constant, but the disappearance of the logarithm.

An error committed early is contracted away before the
trajectory ends and need not be bought at full price, so the compute rate decays
geometrically backwards in time and only the last stretch is paid for. The horizon is still $\log$-long, but the cost is not: the compute integral converges
whatever the value of $T_c$. The proof
(Appendix~\ref{app:schedule}) is carried out in continuous time only, which is why
Theorem~\ref{thm:main}(b) still carries the logarithm.

\subsection{The cost exponent is optimal}

The bounds above are attained and they are also unimprovable. We read this in the
oracle model attached to Assumption~\ref{ass:dyadic}: an algorithm queries a point
and a level, receives $A_k(y)$, pays $c^\gamma2^{\gamma k}$ for it, and
computation between queries is free.

\begin{theorem}[Optimality]\label{thm:lower}
Let $\gamma,c,d_0>0$. Any algorithm that returns $\hat y$ with
$\E[\mathscr{L}(\hat y)-\mathscr{L}^\ast]\le\varepsilon$ on every instance of that
model with $\norm{y_0-y^\ast}\le d_0$ has, on some such instance,
$\E[\Cost]\ge\frac34 8^{-\gamma}(cd_0/\varepsilon)^{\gamma}$ when
$\varepsilon\le\beta d_0^2/8$ and, if it is required to succeed on $\mu$-strongly
convex instances with $\varepsilon\le\mu d_0^2/8$, then
$\E[\Cost]\ge\frac34(c/\sqrt{8\mu\varepsilon})^{\gamma}$.
\end{theorem}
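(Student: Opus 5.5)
The plan is a two-point (Le Cam style) indistinguishability argument in the oracle model. We build two instances whose oracles agree at every level $k$ with $2^{-k}$ above a threshold $\delta_0$, so any query that can tell them apart must be made at a level with $2^{-k}\lesssim\delta_0$. Such a query costs at least $c^\gamma\delta_0^{-\gamma}$. The target bounds then follow by taking $\delta_0\asymp\varepsilon/d_0$ in the convex case and $\delta_0\asymp\sqrt{\mu\varepsilon}$ in the strongly convex case; the constants $8^{-\gamma}$ and $\sqrt{8}$ indicate exactly such a choice.

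Concretely, in the convex case I take one-dimensional linear-then-quadratic (Huber-type) losses $\mathscr{L}_\pm$ with gradients $f_\pm(y)=\pm\delta_0$ on most of the region of interest. Their minimizers sit at $y^\ast_\pm=y_0\pm d_0$, smoothed near the minimizer so that they are $\beta$-smooth; the condition $\varepsilon\le\beta d_0^2/8$ should guarantee this smoothing fits inside the ball. For all levels with $2^{-k}\ge\delta_0$, I set $A_k^{\pm}\equiv 0$, which is admissible because $\norm{f_\pm}_\infty\le\delta_0\le2^{-k}$. I then check that any $\hat y$ has $\mathscr{L}_+(\hat y)-\mathscr{L}_+^\ast+\mathscr{L}_-(\hat y)-\mathscr{L}_-^\ast\ge\delta_0 d_0$, since the two linear pieces point opposite ways over a segment of length $2d_0$. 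With $\delta_0=8\varepsilon/d_0$ this sum exceeds $8\varepsilon$, hence exceeds $2\varepsilon$ with room for the smoothing losses. The strongly convex case is analogous with $\mathscr{L}_\pm(y)=\frac\mu2(y\mp r)^2$ and $r\asymp\sqrt{8\varepsilon/\mu}$. Their gradients differ by at most $2\mu r$ on a bounded window, so we take $\delta_0\asymp\mu r\asymp\sqrt{8\mu\varepsilon}$. We need $\varepsilon\le\mu d_0^2/8$ so that $r\le d_0$ and both minimizers lie within $d_0$ of $y_0=0$. The coarse oracles are set to $A_k=\mu y$, the common part, which is within $\mu r\le2^{-k}$ of both $f_\pm$.

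The probabilistic step goes as follows. Let $E$ be the event that the algorithm ever queries a level with $2^{-k}<\delta_0$. On $E^c$ the transcripts, and hence the law of $\hat y$, coincide under both instances. On $E^c$ the output satisfies $\mathscr{L}_+(\hat y)-\mathscr{L}_+^\ast+\mathscr{L}_-(\hat y)-\mathscr{L}_-^\ast\ge 8\varepsilon$, so at least one of the two excess losses is $\ge4\varepsilon$. Since $\E[\cdot]\le\varepsilon$ on both instances, Markov applied to the average gives $\Pr(E^c)\cdot 8\varepsilon\le 2\varepsilon$, i.e. $\Pr(E^c)\le\frac14$. The event $E$ coincides on both instances up to its first fine query, so $\Pr(E)\ge\frac34$ under either. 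On $E$ the cost is at least $c^\gamma 2^{\gamma k}\ge c^\gamma\delta_0^{-\gamma}$, which gives $\E[\Cost]\ge\frac34(c/\delta_0)^\gamma$. This produces the factor $\frac34$ in the statement.

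The main obstacle is the bookkeeping of the instances themselves. The construction must be genuinely admissible for all levels: the fine levels can simply be exact, but the instances must be convex, $\beta$-smooth (respectively $\mu$-strongly convex), and have $\norm{y_0-y^\ast}\le d_0$. The gradients must also be uniformly within $\delta_0$ of a common function on the whole domain, not just near $y_0$. This is delicate for quadratics, whose gradients are unbounded. I would handle it by working on the bounded hyper-rectangle implicit in Remark~\ref{rem:norms}, or by using Huber-type modifications outside a ball of radius $2d_0$, which do not affect the minimizers. Checking that the constants land exactly at $8^{-\gamma}$ and $\sqrt{8}$ is then a matter of tuning the separation $r$ against the smoothing radius.
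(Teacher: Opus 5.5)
Your argument is correct. Its probabilistic core is exactly the paper's: coupling the two runs through shared randomness, deriving $\Pr(E^c)\le\frac14$ from the separation of the two gaps, and charging the first fine query, which occurs at the same round and level in both runs. Where you differ is the hard pair.

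The paper takes an arbitrary base instance $(\mathscr{L}_0,\mathcal{A}^0)$, adjoins one coordinate $v$, and uses $\mathscr{L}_0(z)+\frac\lambda2v^2\pm\delta v$. It flattens the new direction to $\lambda=8\varepsilon/d_0^2$ in the convex case and takes $\lambda=\mu$ in the strongly convex case. You drop the base and work in one dimension. In the convex case you use a Huber pair of curvature $\beta$ with slope capped at $\delta_0$. Your strongly convex pair is the paper's $v$-coordinate with $\lambda=\mu$, $\delta=\mu r=\sqrt{8\mu\varepsilon}$.

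Your convex construction lands on the exact constants. By convexity and symmetry, the gap sum of the Huber pair is minimized at the midpoint, where it equals $2\delta_0d_0-\delta_0^2/\beta$. This is at least $\delta_0d_0=8\varepsilon$ precisely when the smoothing radius $\delta_0/\beta$ is at most $d_0$, which is the hypothesis $\varepsilon\le\beta d_0^2/8$ (the paper's $\lambda\le\beta$). Since $\norm{f_\pm}_\infty\le\delta_0$, every coarse level can be the zero map, so your instances have prefactor $c$ itself. Any query with $2^{-k}<\delta_0$ then costs more than $(c/\delta_0)^\gamma$. You need none of the paper's bookkeeping: the shift to $A^0_{k+1}$, the threshold $2^{-k_\ast}<2\delta$, and the doubled prefactor $2c$, whose factors of $2$ cancel only at the end.

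What the paper's route buys is legitimacy of the price. Your instances have gradients computable exactly at $O(1)$ cost, so the bound rests on the model charging $c^\gamma2^{\gamma k}$ for something free. That is admissible under Definition~\ref{def:model} as written, hence a proof of the theorem as stated, but it is precisely the case the paper deliberately avoids. Building the pair on top of any base, in particular one whose gradient is intrinsically of rate $\gamma$, shows the hardness survives on genuinely expensive instances. You can recover that stronger form by taking the product with a base started at its minimizer, at which point you inherit the paper's level bookkeeping.

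Two small corrections.
\begin{itemize}
\item In the strongly convex case nothing is delicate. $f_\pm=\mu y\mp\mu r$ differ from $\mu y$ by exactly $\mu r$ everywhere, not just on a window, so $A_k=\mu y$ is admissible globally. The Huber modification outside a ball that you mention would destroy global $\mu$-strong convexity and must not be used there.
\item With $f_+=+\delta_0$, the minimizer of $\mathscr{L}_+$ lies at $y_0-d_0$, not $y_0+d_0$.
\end{itemize}
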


\looseness=-1
Writing $\mathcal{I}_\gamma(d_0)$ for the instances of rate $\gamma$ started within $d_0$
of a minimizer, Theorems~\ref{thm:main} and~\ref{thm:lower} together read
{\setlength{\abovedisplayskip}{4pt}\setlength{\belowdisplayskip}{4pt}%
\setlength{\abovedisplayshortskip}{4pt}\setlength{\belowdisplayshortskip}{4pt}
\[
\min_{\text{algorithms}}\ \max_{\mathcal{I}_\gamma(d_0)}\ \E[\Cost]
\ \asymp\ \Big(\frac{c\,d_0}{\varepsilon}\Big)^{\gamma} ,
\]}
up to a factor depending only on $\gamma$, and $(c/\sqrt{\mu\varepsilon})^\gamma$ under
strong convexity, up to the logarithm. The statement is about the worst instance of the class and not
about every instance: many are easier, and the design of Section~\ref{sec:lsq} is one of
them. In the convex case randomization
moves the exponent: from the
$\varepsilon^{-(\gamma+1)}$ of the deterministic baseline to $\varepsilon^{-\gamma}$, and
Theorem~\ref{thm:lower} says that this full power of $\varepsilon$ is all there was to
take. In the strongly convex case it moves nothing: the baseline already runs at
$\gamma/2$, and $\gamma/2$ is optimal. What randomization buys there is the conditioning,
the baseline paying a factor $\kappa$ that we do not. The proof restricts the iteration in no way, so accelerated methods are covered as
well.

\looseness=-1
\textit{Sketch of proof.} Take any $\mathscr{L}_0$ in the class and flatten one direction:
$\mathscr{L}(s,z)=\mathscr{L}_0(z)+\frac\lambda2s^2$, then tilt,
$\mathscr{L}_\pm=\mathscr{L}\pm\delta s$. The two copies share a hierarchy at every level
coarser than $\delta$, so no algorithm separates them there and since $\mathscr{L}$ is
$\lambda$-flat along $s$, their optimality gaps sum to at least $\delta^2/\lambda$ at
every point, so they admit no common $\varepsilon$-minimizer. The algorithm must
therefore query below that resolution, at a cost of $(c/\delta)^\gamma$. Taking
$\lambda=8\varepsilon/d_0^2$ gives the convex bound, $\lambda=\mu$ the strongly convex one.

\begin{remark}[The exponent is also forced structurally]\label{rem:legendre}
Theorem~\ref{thm:lower} fixes an adversarial hierarchy. Under strong convexity the
exponent is forced by the framework alone, with no construction at all. Consider the Legendre transform
$\nabla\mathscr{L}^\ast(z)=\arg\min_y\{\mathscr{L}(y)-\ip zy\}$, a solver for
$\mathscr{L}$ \emph{is} an approximation scheme for $\nabla\mathscr{L}^\ast$, whose
scaling law is the solver's own cost profile. A method beating the exponent would
hand $\nabla\mathscr{L}^\ast$ a cheaper hierarchy, and conjugating back (the
class is stable and $\mathscr{L}^{\ast\ast}=\mathscr{L}$) would hand
$\nabla\mathscr{L}$ one cheaper than its own circuit complexity permits. At the
intrinsic rate
$\gamma_{\min}(f)=\inf\{\gamma:\norm{f}_{M^\gamma}<\infty\}$, optimizing therefore
costs exactly what evaluating the gradient once costs. Proofs are in
Appendix~\ref{app:lower}.
\end{remark}

\subsection{A word on acceleration}
\label{sec:nesterov}

\looseness=-1
In smooth convex optimization, acceleration trades iterations for precision: it reaches
accuracy $\varepsilon$ in $\varepsilon^{-1/2}$ steps instead of $\varepsilon^{-1}$, and
since each step still costs one gradient evaluation, the trade is free. Under
Assumption~\ref{ass:dyadic} it is not, because accuracy has a price: a sharper gradient is
a more expensive one, and whether the trade still pays depends on $\gamma$. Driving the
standard accelerated scheme with the same oracle and balancing its three error channels
against the cost model prices the trade exactly (Appendix~\ref{app:acc}).

\begin{proposition}[Convex]\label{prop:acc-cvx}
Accelerating multiplies the bound of Theorem~\ref{thm:main}(a) by $T^{\gamma-2}$ above the
threshold and by $T^{(\gamma-2)/2}$ below it, where $T\asymp d_0\sqrt{\beta/\varepsilon}$
is the accelerated horizon.
\end{proposition}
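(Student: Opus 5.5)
We will drive the standard accelerated scheme (Nesterov's method with step $\eta=\frac1{4\beta}$ and momentum $\frac{t-1}{t+2}$, or the equivalent estimate-sequence form) with the randomized oracle $\tilde f_{\delta,\sigma}$ of Proposition~\ref{prop:oracle}. We then redo the budget split of Theorem~\ref{thm:main}(a) with the error decomposition known for accelerated methods under a biased and noisy oracle, in the spirit of \citet{dvurechensky2016}.

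\textbf{Step 1: error decomposition.} The first step is a master bound analogous to the three-term bound in the proof sketch of Theorem~\ref{thm:main}, namely
\[
\E\big[\mathscr{L}(x_T)-\mathscr{L}^\ast\big]\ \lesssim\ \frac{\beta d_0^2}{T^2}+T\,\delta\,d_0+\frac{T\,\sigma^2}{\beta}.
\]
To get it, we run the estimate-sequence Lyapunov argument with weights $\alpha_t\propto t$, conditioning on the past so that the centered part of the oracle drops out of the cross term. The bias enters through Cauchy--Schwarz against the distance of the auxiliary sequence to $y^\ast$. As in Appendix~\ref{app:main}, we need a separate induction showing this distance stays $O(d_0)$ in expectation. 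The variance enters through the $\eta^2\norm{\cdot}^2$ term weighted by $\alpha_t^2$; summed against the normalizer $\sum\alpha_t\asymp T^2$, this gives the $T\sigma^2/\beta$ accumulation. The key feature to record is that bias and noise \emph{accumulate} linearly in $T$, rather than averaging out as they do for plain gradient descent.

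\textbf{Step 2: splitting the budget.} We split $\varepsilon$ into three equal parts. The horizon term gives $T\asymp d_0\sqrt{\beta/\varepsilon}$. The bias term gives $\delta\asymp\varepsilon/(Td_0)$, which is $T$ times smaller than the $\delta\asymp\varepsilon/d_0$ used in Theorem~\ref{thm:main}(a). The noise term gives $\sigma^2\asymp\beta\varepsilon/T$; writing $\eta=\frac1{4\beta}$, this is $\sigma^2\asymp\varepsilon/(\eta T)$, also $T$ times smaller than the gradient-descent choice $\sigma^2=\varepsilon/(4\eta)$.

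\textbf{Step 3: cost comparison.} We multiply $T$ by the per-call compute of Proposition~\ref{prop:oracle} and compare with the Theorem~\ref{thm:main}(a) bound, which is $T_{\rm GD}\cdot\Cost(\delta_{\rm GD},\sigma_{\rm GD})$ with $T_{\rm GD}\asymp\beta d_0^2/\varepsilon\asymp T^2$.
\begin{itemize}
\item \emph{Above the threshold,} the per-call cost $c^\gamma\delta^{2-\gamma}\sigma^{-2}$ scales as $T^{\gamma-2}\cdot T$ relative to gradient descent. Combined with the horizon ratio $T/T^2$, this gives the overall factor $T^{\gamma-2}$.
\item \emph{Below the threshold,} the per-call cost $(c/\sigma)^\gamma$ scales by $T^{\gamma/2}$. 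Multiplying by the horizon ratio $T^{-1}$ gives $T^{\gamma/2-1}=T^{(\gamma-2)/2}$.
\end{itemize}
We will check that these factors reproduce the accelerated rows of Table~\ref{tab:bounds}. For instance, in the HTMC case, $(cd_0/\varepsilon)^\gamma\,(d_0\sqrt{\beta/\varepsilon})^{\gamma-2}$ matches $d_0^{2\gamma-2}\beta^{(\gamma-2)/2}\varepsilon^{-(3\gamma-2)/2}$.

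\textbf{Main obstacle.} The hard part is Step 1: obtaining the sharp linear-in-$T$ accumulation of both bias and variance for the \emph{stochastic} accelerated iteration, with a constant independent of $\gamma$. This needs an a priori bound on the auxiliary sequence's distance to $y^\ast$. I would first try the induction used for Theorem~\ref{thm:main}, run on the Lyapunov function $A_t(\mathscr{L}(x_t)-\mathscr{L}^\ast)+\frac12\norm{z_t-y^\ast}^2$. If it does not close, the fallback is to absorb the bias by Young's inequality, at the cost of constants.

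A further point to verify is that the optimal split in Step 2 is still an equal split. Below the threshold one cannot trade a larger $\eta$ for cost, since $\eta=\frac1{4\beta}$ is already maximal, so the stated $T$ is the right horizon. Above the threshold, shrinking $\eta$ no longer cancels the cost: the accelerated horizon scales as $\eta^{-1/2}$ while $\sigma^2\propto 1/(\eta T)$. The largest step is therefore again optimal, and $T\asymp d_0\sqrt{\beta/\varepsilon}$ is the right horizon in both regimes.
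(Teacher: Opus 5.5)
Your Steps 2 and 3 are the paper's argument. The budgets $T\asymp d_0\sqrt{\beta/\varepsilon}$, $\delta\asymp\varepsilon/(Td_0)=\varepsilon^{3/2}/(d_0^2\sqrt\beta)$ and $\sigma^2\asymp\varepsilon^{3/2}\sqrt\beta/d_0$ are exactly the ones the paper balances. Reading the multiplier as ``$\delta$ and $\sigma^2$ each $T$ times smaller, horizon $T$ times shorter'' is tidier bookkeeping than the paper's collection of exponents of $\varepsilon$, $d_0$ and $\beta$.

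You differ from the paper in Step~1. The paper does not derive the three-channel bound. It turns the gradient bias into a Devolder--Glineur--Nesterov $(\delta_0,\beta)$-oracle with $\delta_0=2\delta D=4\delta d_0$ on a ball of diameter $2d_0$ (Lemma~\ref{lem:convert}(a)). It then quotes Theorem~3.4 of Dvurechensky--Gasnikov, whose noise channel is $\sigma d_0/\sqrt T$ rather than your $T\sigma^2/\beta$. The two channels prescribe the same $\sigma^2$ at the balanced horizon, since $\beta\varepsilon/T=\varepsilon^2T/d_0^2$ exactly when $T^2=\beta d_0^2/\varepsilon$. Yours is the one the fixed-step iteration actually satisfies; the $\sigma d_0/\sqrt T$ rate needs $\sigma$-dependent steps. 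Your unconstrained route also replaces the bounded feasible set by the a priori radius bound. That bound closes as in Lemma~\ref{lem:radius}, because under these budgets both accumulated channels contribute $O(d_0^2)$ to the Lyapunov function.

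One caution: the mechanism you describe in Step~1 does not produce a $T\delta d_0$ term.
\begin{itemize}
\item In the estimate-sequence argument all oracle error enters through $-a_{t+1}\langle e_t,v_{t+1}-y^\ast\rangle$, with $e_t=b_t+\xi_t$.
\item The noise piece against $v_t-y^\ast$ has zero mean.
\item The bias piece satisfies $-a_{t+1}\langle b_t,v_t-y^\ast\rangle\le a_{t+1}\delta\|v_t-y^\ast\|$. It sums to $A_T\cdot O(\delta d_0)$, which is a \emph{non-accumulating} $O(\delta d_0)$ after dividing by $A_T$.
\item The piece along $v_{t+1}-v_t$ is absorbed by Young into $a_{t+1}^2\|e_t\|^2$, which accumulates only to $T(\sigma^2+\delta^2)/\beta$.
\end{itemize}
So the argument you sketch yields
\[
\frac{\beta d_0^2}{T^2}+\delta d_0+\frac{T(\sigma^2+\delta^2)}{\beta},
\]
the phenomenon of d'Aspremont and of Schmidt--Le Roux--Bach.

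Your displayed bound follows from this one, since $\delta\le\varepsilon/d_0\le\beta d_0$ in the relevant range. The multipliers you derive are therefore correct as statements about that three-channel bound, which is exactly what the paper's proof shows as well. You should, however, present the $T\delta d_0$ term as a deliberate relaxation, or import it through the $(\delta_0,L)$-conversion as the paper does.

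Be aware of what happens if you balance the sharper bound instead. It keeps $\delta\asymp\varepsilon/d_0$ and gives $c^\gamma T\delta^{2-\gamma}\sigma^{-2}\asymp(cd_0/\varepsilon)^\gamma$, a multiplier of $1$ above the threshold. Below the threshold nothing changes, because $\delta$ does not enter the cost there. The factor $T^{\gamma-2}$ thus reflects which bound is balanced, not anything your Step~1 forces.
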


\begin{proposition}[Strongly convex]\label{prop:acc-sc}
Accelerating multiplies the bound of Theorem~\ref{thm:main}(b) by $\kappa^{\frac{\gamma-2}4}$,
in all three regimes.
\end{proposition}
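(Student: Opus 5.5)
The statement to prove is Proposition~\ref{prop:acc-sc}: accelerating in the strongly convex case multiplies the bound of Theorem~\ref{thm:main}(b) by $\kappa^{\frac{\gamma-2}{4}}$ in all three regimes. I would run the standard strongly convex accelerated scheme (constant momentum $\frac{1-\sqrt{\eta\mu}}{1+\sqrt{\eta\mu}}$, $\eta=\frac1{4\beta}$) driven by the oracle $\tilde f_{\delta,\sigma}$ of Proposition~\ref{prop:oracle}. The argument uses an inexact-oracle Lyapunov analysis in the style of \citet{dvurechensky2016}. Take $\Phi_t=\mathscr{L}(x_t)-\mathscr{L}^\ast+\frac\mu2\norm{v_t-y^\ast}^2$. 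Conditioning on the past so that the centered noise drops out, I would aim for a recursion of the form
\[
\E\Phi_{t+1}\le\Big(1-\sqrt{\eta\mu}\Big)\E\Phi_t+C_1\,\delta\sqrt{\E\Phi_t/\mu}+C_2\,\eta\,\tilde\sigma^2 .
\]
This is the accelerated analogue of \eqref{eq:master}. The bias enters through Cauchy--Schwarz against $\norm{v_t-y^\ast}\lesssim\sqrt{\Phi_t/\mu}$, and the noise enters through the smoothness step, which costs $\eta\tilde\sigma^2$ per iteration.

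Next I would sum the recursion with geometric weights $(1-\sqrt{\eta\mu})^{-t}$, exactly as in the proof of Theorem~\ref{thm:main}(b). This gives a horizon $T\asymp\sqrt\kappa\log\frac{\mu d_0^2}{\varepsilon}$ and a noise floor $\Phi_\infty$ solving
\[
\Phi_\infty\asymp\frac{\delta\sqrt{\Phi_\infty/\mu}+\eta\tilde\sigma^2}{\sqrt{\eta\mu}} .
\]
Requiring $\Phi_\infty\le\varepsilon$ leads to the budgets $\delta\asymp\sqrt{\eta\mu}\sqrt{\mu\varepsilon}=\kappa^{-1/2}\sqrt{\mu\varepsilon}$ (up to constants) and $\sigma^2\asymp\varepsilon\sqrt{\mu/\eta}$, i.e.\ $\eta\sigma^2\asymp\varepsilon\,\kappa^{-1/2}$. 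Compared with plain gradient descent, the bias budget shrinks by a factor $\kappa^{-1/2}$, the variance budget shrinks by $\kappa^{-1/2}$ relative to $\varepsilon/\eta$, and $T$ shrinks by $\kappa^{-1/2}$ relative to $\kappa\log$.

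The final step multiplies $T$ by the per-call compute of Proposition~\ref{prop:oracle} in each regime.
\begin{itemize}
\item For $\gamma>2$, $T\delta^{2-\gamma}\sigma^{-2}$ scales as $\sqrt\kappa\cdot\kappa^{\frac{\gamma-2}{2}}(\mu\varepsilon)^{\frac{2-\gamma}2}\cdot\eta\kappa^{1/2}\varepsilon^{-1}$. With $\eta=1/(4\beta)$ and $\eta/\varepsilon=\kappa^{-1}(\mu\varepsilon)^{-1}$ up to constants, this equals $(\mu\varepsilon)^{-\gamma/2}\kappa^{\frac{\gamma-2}{2}}$. That exceeds the claimed $\kappa^{\frac{\gamma-2}4}$, so the budget split has to be rebalanced.
\item The fix is to not put all of the error on one channel. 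Optimize jointly over $\delta$ and the free parameter $\eta\le\frac1{4\beta}$. This is possible because in the accelerated scheme the step does not cancel, so a smaller $\eta$ is no longer free and trades horizon for both budgets. I would minimize $T(\eta)\,\delta(\eta)^{2-\gamma}\sigma(\eta)^{-2}$ over $\eta$, expecting the optimum at an intermediate $\eta\asymp\beta^{-1/2}\mu^{-1/2}$. The ``effective condition number'' is then $\sqrt\kappa$, which is what yields exactly $\kappa^{\frac{\gamma-2}4}$.
\item For $\gamma<2$ and $\gamma=2$, the same bookkeeping with costs $\sigma^{-\gamma}$ and $\sigma^{-2}\log^2$ compares $\kappa^{1-\gamma/2}$ against $\kappa^{\frac{2-\gamma}{4}}$. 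Their ratio is again $\kappa^{\frac{\gamma-2}4}$, matching Table~\ref{tab:bounds}.
\end{itemize}

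The main obstacle is the Lyapunov recursion itself. Accelerated methods accumulate bias, and the standard analyses give error growth proportional to $T\delta$ rather than a contracted floor. To keep the bias term proportional to $\sqrt{\Phi_t/\mu}$, I would need to bound $\norm{v_t-y^\ast}$ uniformly in expectation by an induction analogous to the ``iterates stay in a ball'' argument of Appendix~\ref{app:main}. I would try that induction first, using the strong-convexity contraction to absorb the bias as long as $\delta\le\sqrt{\eta}\,\mu\,d_0$. The second delicate point is the choice of $\eta$ in the final optimization, where I must check that the optimizer respects $\eta\le\frac1{4\beta}$ in every regime.
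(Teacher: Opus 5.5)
There is a genuine gap: your argument does not reach $\kappa^{\frac{\gamma-2}4}$ in the HTMC regime, and the repair you propose cannot work. In your recursion the bias enters as $C_1\delta\sqrt{\Phi_t/\mu}$ per step, against a contraction of only $\sqrt{\eta\mu}$. The bias floor is therefore $\delta^2/(\eta\mu^2)\asymp\kappa\,\delta^2/\mu$, which forces $\delta^2\asymp\mu\varepsilon/\kappa$ and the multiplier $\kappa^{\frac{\gamma-2}2}$, as you found.

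Tuning $\eta$ does not help. With your own budgets at a general step, $T\asymp(\eta\mu)^{-1/2}$, $\delta\asymp\sqrt{\eta\mu}\sqrt{\mu\varepsilon}$ and $\sigma^2\asymp\varepsilon\sqrt{\mu/\eta}$. Horizon and noise cancel, $T\sigma^{-2}\asymp(\mu\varepsilon)^{-1}$, so the HTMC cost is $T\delta^{2-\gamma}\sigma^{-2}\asymp(\eta\mu)^{\frac{2-\gamma}2}(\mu\varepsilon)^{-\frac\gamma2}$. This is strictly decreasing in $\eta$ when $\gamma>2$, so there is no interior optimum. The best admissible choice is $\eta=\frac1{4\beta}$, which returns $\kappa^{\frac{\gamma-2}2}$. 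Your candidate $\eta\asymp(\beta\mu)^{-1/2}$ equals $4\sqrt\kappa$ times $\frac1{4\beta}$, where the smoothness step of the analysis (which needs $\eta\lesssim1/\beta$) is no longer available.

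The obstacle you flag is not the real one either: $\norm{v_t-y^\ast}^2\le2\Phi_t/\mu$ holds by the definition of $\Phi_t$, with no induction. What is wrong is the size of the bias coefficient. Your $\gamma<2$ and $\gamma=2$ branches are fine. There the bias budget enters the per-call price at most inside a logarithm, and your variance budget $\sigma^2\asymp\mu\varepsilon\sqrt\kappa$ (at $\eta=\frac1{4\beta}$) is the right one.

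The missing ingredient is a bias floor of order $\sqrt\kappa\,\delta^2/\mu$ rather than $\kappa\,\delta^2/\mu$. The paper obtains it as follows.
\begin{itemize}
\item By Lemma~\ref{lem:convert}(b), under strong convexity the mean oracle is a $(\delta_0,2\beta)$-oracle on all of $\R^d$ with $\delta_0\le2\delta^2/\mu$, quadratic in $\delta$ and with no diameter.
\item This goes into the three-channel bound (\ref{eq:dg34}) of \citet{dvurechensky2016}, $\beta R^2/N^2+N\delta_0+\sigma R/\sqrt N$.
\item The method restarts every $N\asymp\sqrt\kappa$ steps, each stage halving the expected squared distance, so the bias accumulated within a stage is only $N\delta^2/\mu\asymp\sqrt\kappa\,\delta^2/\mu$.
\item Both budgets bind at the last stage, where $R^2\asymp\varepsilon/\mu$. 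This gives $\delta^2\asymp\mu\varepsilon/\sqrt\kappa$, $\sigma^2\asymp\mu\varepsilon\sqrt\kappa$ and $T\asymp\sqrt\kappa\log\frac{\mu d_0^2}{\varepsilon}$, whose product with $\delta^{2-\gamma}\sigma^{-2}$ is exactly $\kappa^{\frac{\gamma-2}4}(\mu\varepsilon)^{-\gamma/2}\log\frac{\mu d_0^2}{\varepsilon}$.
\end{itemize}

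If you would rather keep a Lyapunov route, note that the bias coefficient should carry a factor $\sqrt{\eta\mu}$. The $v$-update moves by $\frac{\sqrt{\eta\mu}}{\mu}g_t$, so Cauchy--Schwarz gives $\sqrt{\eta\mu}\,\delta\norm{u_t}$. Here $u_t=(1-\sqrt{\eta\mu})(v_t-y^\ast)+\sqrt{\eta\mu}(w_t-y^\ast)$, with $w_t$ the query point, and $\norm{u_t}\le\sqrt{2\Phi_t/\mu}$. Combining the descent step with the $v$-update, the remaining first-order bias term is $\beta\eta^2\ip{\nabla\mathscr{L}(w_t)}{b}$. Young's inequality absorbs it into the slack $-\frac\eta2(1-\beta\eta)\norm{\nabla\mathscr{L}(w_t)}^2$ at a price of $O(\eta\delta^2)$. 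The floor then drops to $O(\delta^2/\mu)$, which gives an HTMC bound with no power of $\kappa$ at all. That is a different statement from the proposition, not a derivation of its multiplier.
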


\looseness=-1
Every exponent carries the sign of $\gamma-2$, and Theorem~\ref{thm:lower} says why. That
bound restricts the iteration in no way, so it binds accelerated methods too: nothing in
the model goes below $\gamma$. Above the threshold Theorem~\ref{thm:main} already sits
there, so acceleration has nothing to win and strictly loses; at the threshold the
multipliers are $1$ and it is exactly neutral. Only strictly below, where the plain
exponent $1+\frac\gamma2$ exceeds the optimal $\gamma$, is there room and acceleration
takes part of it without closing the gap. \textbf{Acceleration pays if and only if
$\gamma<2$: its threshold is the HTMC threshold itself.} It buys fewer but sharper, hence
more expensive, steps, which is the wrong currency wherever Assumption~\ref{ass:dyadic}
charges for accuracy rather than for steps.

\section{Least squares with truncated features}
\label{sec:lsq}

Assumption~\ref{ass:dyadic} posits a family of approximations whose bias and cost trade off at a fixed
rate. We instantiate it on least squares with truncated features, where that rate is
available in closed form from the geometry of the design. The assumption can then be
measured rather than posited, and the separation Theorem~\ref{thm:main} predicts between the two
exponents can be checked. The instance is chosen for what it lets us measure and not for
being a regime where the method competes with dedicated least-squares solvers.

\subsection{The design and its rate}

Let $A\in\R^{m\times n}$ with $n\gg m$, let $b\in\R^m$, and let
\[
L(X)=\tfrac12\norm{AX-b}^2,
\qquad
\nabla L(X)=A^\top r(X),\qquad r(X)=AX-b .
\]

Order the columns by decreasing norm and assume the power law
$\norm{A_i}\asymp i^{-1/\varphi}$ with $0<\varphi<2$. The level-$k$ algorithm solves the
problem restricted to the first $k$ columns and reports its gradient,
\[
A_k(X)=A_{\le k}^\top\big(A_{\le k}X-b\big),
\]
where $A_{\le k}$ is $A$ with its last $n-k$ columns zeroed. It reads $k$ columns and
nothing else, carries no state between calls, and costs $\Theta(mk)$.

\medskip
\begin{proposition}\label{prop:lsq}
Fix $R>0$. On the ball $\norm{X-X^\ast}\le R$ the family $(A_k)_k$ satisfies
Assumption~\ref{ass:dyadic} with
\[
\gamma=\frac{2\varphi}{2-\varphi},
\qquad
c^\gamma\asymp m\,M^\gamma,
\qquad
M=\norm{A}\big(2R+\norm{X^\ast}\big)+\norm{AX^\ast-b} .
\]
The same holds for $A_{\le k}^\top r(X)$, the first $k$ coordinates of the true gradient,
with the smaller constant $M=\norm{A}R+\norm{AX^\ast-b}$. In particular the problem is
HTMC precisely when $\varphi>1$.
\end{proposition}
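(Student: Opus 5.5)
The plan is to bound the error of the level-$k$ algorithm by an explicit algebraic identity, estimate the tail of the column spectrum, and then re-index the column count dyadically to match Assumption~\ref{ass:dyadic}. Throughout, write $A_{>k}=A-A_{\le k}$ for the discarded columns and $r(X)=AX-b$.

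\textbf{Step 1: error identity.} I will start from the identity
\[
A_k(X)-\nabla L(X)=A_{\le k}^\top\big(A_{\le k}X-b\big)-A^\top r(X)=-A_{\le k}^\top A_{>k}X-A_{>k}^\top r(X).
\]
It follows by substituting $A_{\le k}X=AX-A_{>k}X$ in the first term and splitting $A^\top=A_{\le k}^\top+A_{>k}^\top$ in the second. On the ball $\norm{X-X^\ast}\le R$ we have $\norm{X}\le R+\norm{X^\ast}$ and $\norm{r(X)}\le\norm{A}R+\norm{AX^\ast-b}$. Hence
\[
\norm{A_k(X)-\nabla L(X)}\le\norm{A_{>k}}\Big(\norm{A}\big(R+\norm{X^\ast}\big)+\norm{A}R+\norm{AX^\ast-b}\Big)=\norm{A_{>k}}\,M .
\]
For the truncated true gradient $A_{\le k}^\top r(X)$ the cross term is absent. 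The error is just $-A_{>k}^\top r(X)$, which gives the smaller constant $\norm{A}R+\norm{AX^\ast-b}$ directly.

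\textbf{Step 2: tail estimate.} I will bound the operator norm of the tail by its Frobenius norm:
\[
\norm{A_{>k}}\le\Big(\sum_{i>k}\norm{A_i}^2\Big)^{1/2}\asymp\Big(\sum_{i>k}i^{-2/\varphi}\Big)^{1/2}\asymp k^{\frac12-\frac1\varphi}=k^{-1/\gamma},\qquad \gamma=\frac{2\varphi}{2-\varphi}.
\]
The sum converges precisely because $\varphi<2$. So the level-$k$ error is at most $M k^{-1/\gamma}$, up to a constant depending only on $\varphi$.

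\textbf{Step 3: dyadic re-indexing.} To meet accuracy $2^{-j}$ I will take $k_j=\lceil (C M2^{j})^{\gamma}\rceil$ columns, capped at $n$; at $k=n$ the error is zero. The cost is then $\Theta(mk_j)\lesssim m M^\gamma 2^{\gamma j}$. This is Assumption~\ref{ass:dyadic} with $c^\gamma\asymp mM^\gamma$, where the rounding of the ceiling is absorbed into the constant as long as $CM2^j\gtrsim1$. For coarser levels, the choice $k=0$ already has error at most $M$, because $\norm{A}\ge\norm{A_{>0}}$. Finally, the problem is HTMC exactly when $\gamma>2$. Since $\gamma>2$ is equivalent to $2\varphi>4-2\varphi$, this holds precisely when $\varphi>1$.

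\textbf{Main obstacle.} The delicate point is that the statement asserts the rate $\gamma$ rather than merely an upper bound. The Frobenius bound only gives the upper estimate. To argue that the exponent is not better in general, I would exhibit orthogonal columns, for which $\norm{A_{>k}}=\norm{A_{k+1}}\asymp k^{-1/\varphi}$. That case would give a strictly better exponent, so I expect the proposition is meant as an upper bound (``satisfies Assumption~\ref{ass:dyadic}''). For the upper bound, the Frobenius estimate suffices, and I will state it that way.
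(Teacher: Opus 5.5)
Your proof is correct and follows the paper's argument. You use the same split of $\nabla L-A_k$ into the discarded tail $A_{>k}^\top r(X)$ and the cross term $A_{\le k}^\top A_{>k}X$, dropping the latter for $A_{\le k}^\top r$, the same Frobenius bound $\norm{A_{>k}}\lesssim k^{-1/\gamma}$, and the same inversion $k\asymp(M/\delta)^\gamma$ at cost $\Theta(mk)$; your dyadic bookkeeping is simply more explicit. As you suspected, the paper also proves only the upper bound, but your orthogonal-columns aside cannot arise when $n\gg m$: $A_{>k}$ has rank at most $m$, so $\norm{A_{>k}}^2\ge\frac1m\sum_{i>k}\norm{A_i}^2$, and for $k\ll n$ the Frobenius rate of $\norm{A_{>k}}$ is in fact attained up to an $m$-dependent factor.
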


The proof is in Appendix~\ref{app:protocol}. In our experiments we use both the truncated residual approximation $A_K(x)$ and the exact residual approximation $A_{\leq k}^T r(X)$.

\medskip
The threshold now reads off the design: $\varphi=1$ is $\norm{A_i}\asymp1/i$, so the HTMC
regime is the one in which feature importance decays \emph{more slowly than harmonically},
and no finite prefix of the design carries most of the signal.

\begin{remark}[\textbf{The rate is read off the other truncation.}]\label{rem:reading}

The algorithm above is the
natural reading of Assumption~\ref{ass:dyadic} here and, carrying nothing from one call to the next, the only one available on an arbitrary design. It is the one the experiments run but measuring $\gamma$, however, requires using $A_{\le k}^\top r(X)$ (Figure~\ref{fig:forms}, right)\footnote{
Proposition~\ref{prop:lsq} covers both with the same rate, but only the second attains the bound at every level, the cross term of the first being not a power law but a window
$(k,k_{\max}]$ that closes, so that no exponent survives a fit through it
(Appendix~\ref{app:residual}).}.

\end{remark}

\subsection{Experiment}

\subsubsection{Protocol}

Columns are drawn Gaussian, normalized, then
rescaled so that $\norm{A_i}=i^{-1/\varphi}$ exactly. The target $X^\ast$ is drawn flat, so the discarded features carry real energy,
and $b=AX^\ast$, so the full problem interpolates. Every run starts at the origin, whose
loss $L_0=L(0)=\tfrac12\norm b^2$ is the unit in which target losses are reported below.

We consider
\begin{center}
$\varphi=1.2$, hence $\gamma=3$;\quad $m=2048$,\quad levels $\{1,8,64,512\}$,\quad
$n=131072$.
\end{center}
The level ladder is geometric with ratio $2^\gamma=8$: each level keeps eight times as many
features as the one below it, costs eight times as much, and halves its bias. That is the
normalization of Assumption~\ref{ass:dyadic}, and it is what lets the probabilities
$p_k\propto2^{-(1+\gamma/2)k}$ apply as written. Two requirements fix the rest:
 $ \bm k_{\max}\ll m$ to protect the
\emph{loss floor} of a level and  $\bm k_{\max}\ll n$ to protect the
\emph{bias} of a level, which is the size of the tail it discards. The complete sizing and design of the experiment is detailed in Appendix~\ref{app:protocol}

\subsubsection{Results}

The baseline is inexact gradient descent at a fixed level: the same oracle and the same
optimizer, which is the comparison Theorem~\ref{thm:main} makes. It picks its truncation
level from a ladder finer than the oracle's, which is confined to the levels its
probabilities presuppose, so the comparison does not favour the multilevel method by confining its rival. Both methods are
tuned over the parameters they have left, the compute reported for a target loss is the
smallest at which that method reached it, and every figure is a median over ten seeds, with the median in bold, and a band showing the interquartile range.

\begin{figure}
\begin{minipage}[t]{0.49\textwidth}
\vspace{0pt}
\centering
\includegraphics[width=\textwidth]{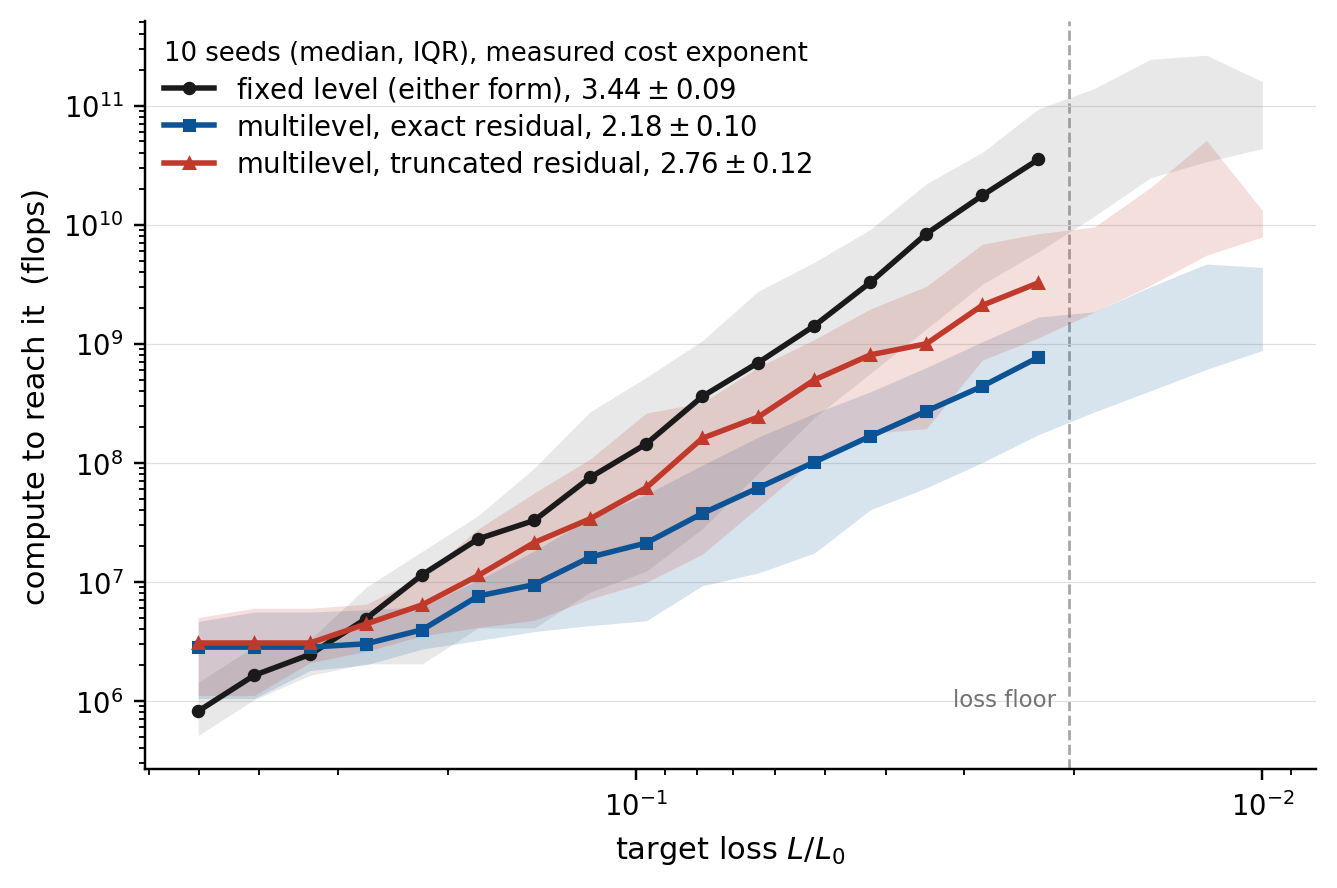}
\end{minipage}\hfill
\begin{minipage}[t]{0.49\textwidth}
\vspace{0pt}
\centering
\includegraphics[width=\textwidth]{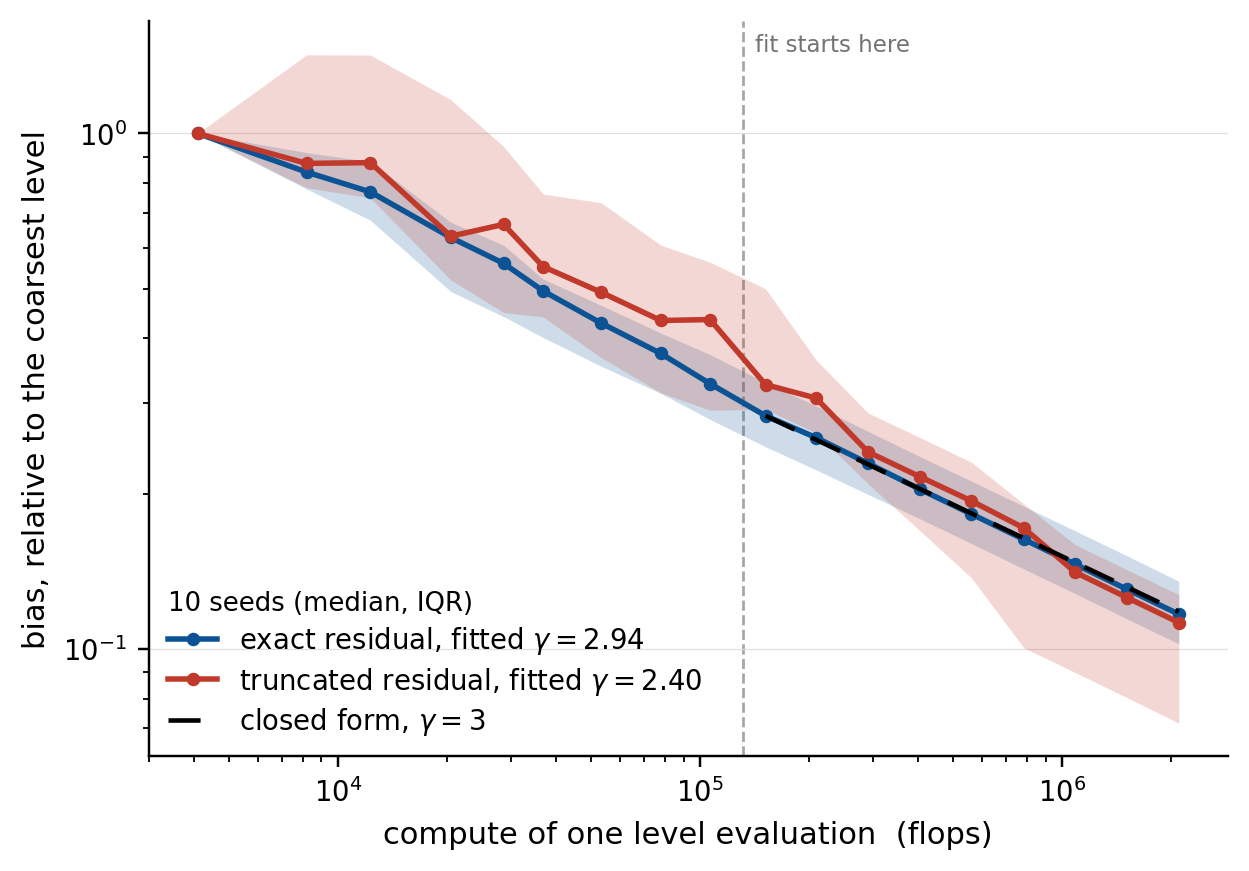}
\end{minipage}
\caption{\textbf{Left:} Compute to reach a target loss, as a fraction of $L_0$, both methods driven by
the algorithm of Proposition~\ref{prop:lsq}. The dashed line is
the loss floor of the truncated problem, which here coincides with
$\varepsilon=\mu d_0^2$ and past it the strongly convex column of Theorem~\ref{thm:main} becomes the binding one.  \textbf{Right:} bias of
each level against the compute of evaluating it, for both truncations, with each seed
normalised by its own coarsest bias, against the closed-form slope.
}
\label{fig:forms}
\end{figure}

Figure~\ref{fig:forms} is the comparison the assumption delivers: the multilevel exponent is the smaller on every
seed, $2.76\pm0.12$ against $3.44\pm0.09$, and the advantage reaches $10.9\times$ at the
tightest target both methods reach. It also shows what happens if we consider the implementation with the exact residual from Remark~\ref{rem:reading} on which $\gamma$ can be measured exactly, and with which the same run is cheaper: $2.18\pm0.10$, and $46\times$ at that same target.

On every seed the bias follows a power law with $R^2\ge0.999$ (Figure~\ref{fig:forms}, right), and
\[
\gamma_{\mathrm{meas}}=2.963\pm0.007
\qquad\text{against}\qquad
\frac{2\varphi}{2-\varphi}=3,
\]

the remaining $1.2\%$ closing as the tail of the design is lengthened
(Appendix~\ref{app:protocol}). Assumption~\ref{ass:dyadic} is measured here rather than posited, and its
exponent is the one the geometry predicts. The same fit on the truncated residual returns $2.04\pm0.26$, its per-seed values ranging
from $1.01$ to $2.99$ where the exact one stays between $2.92$ and $2.99$.

All three exponents run under the $\gamma+1$ and $\gamma$ the theorem allows them
(Appendix~\ref{app:protocol}). Neither advantage is there from the start: at loose targets
the fixed level is in fact \emph{cheaper}, as the telescope from the multilevel pays its overhead for nothing,
and the gap opens only as the target tightens.

Least squares was chosen for what it lets us check, not for competing. Truncation there
restricts to a subspace, so the loss gap of a level is the square of its gradient bias: a
solver minimizing the level-$k$ problem needs only $\varepsilon^{-\gamma/2}$, and on a
quadratic a Krylov method beats both curves above. What it buys instead is a $\gamma$ in
closed form, measurable rather than assumed.

\section{Conclusion}

In this work we showed that when the gradient of a convex objective is reachable only
through a hierarchy of approximations whose compute grows like $\delta^{-\gamma}$ in the accuracy $\delta$ and when $\gamma>2$, reaching loss $\varepsilon$ costs no more than a single gradient evaluation at the accuracy the problem demands, that is $\varepsilon^{-\gamma}$ rather than a cost of $\varepsilon^{-(\gamma+1)}$ for fixed-accuracy gradient descent. We accomplish this using a randomized multilevel oracle turning the deterministic biased hierarchy into a weakly biased estimator whose bias and variance are priced independently
and running plain inexact gradient descent. The resulting bound does not depend on the step size, and hence not on the smoothness constant, which makes it a functional of the underlying gradient flow rather than of any discretization of it. A lower bound in the same oracle model shows that the rate of $\varepsilon^{-\gamma}$ is optimal, and since the argument constrains the iteration in no way, it also binds accelerated methods. We used a simple least-squares instance whose rate is available in closed form to measure the cost assumption rather than posit it and implement the described algorithm. 

A remaining question is whether the advantages of this method survive in the non-convex setting. There is a great potential in using this method to speed up the training of DNNs, whose gradient evaluations are very costly. Given the existing evidence \citep{jacot2025norms} that DNNs themselves lie in the HTMC regime ($\gamma>2$), it seems likely that their gradient does as well.

\bibliography{iclr2027_conference}
\bibliographystyle{iclr2027_conference}

\newpage

\subsection*{AI use statement}

We used generative AI tools for writing assistance and for the implementation of the
numerical experiment. Prose was drafted by the authors and revised with AI assistance for
concision and clarity, and the \LaTeX{} preparation of tables and figures was likewise
AI-assisted. The code producing the experiment of Section~\ref{sec:lsq} was written with AI
assistance, function by function, and reviewed by the authors as it was written. We also
used AI tools to help locate related work, and checked every reference against its source.

We did not use generative AI for the research questions, the statements, or the proofs of
this paper. All AI-assisted work has been reviewed, and the correctness of the code does
not rest on that review alone: the unbiasedness of the estimator, the flop accounting, and
the scaling law the instance is meant to exhibit are each checked numerically against an
independent prediction, and those checks are reported in Appendix~\ref{app:protocol}. We
take responsibility for the final content of this work, including text, claims and
artifacts produced with the aid of generative AI.

\subsection*{Ethics statement}
This work is theoretical, and its experiment uses synthetic data generated by the procedure
described in Section~\ref{sec:lsq}. It involves no human subjects, no personal or sensitive
data, and releases no dataset. We are aware of no ethical concern specific to it beyond
those attaching to optimization methods in general.

\appendix

\section{Related work}
\label{app:related}

\paragraph{Multilevel estimators.}
Proposition~\ref{prop:oracle} is the randomized telescope of the multilevel Monte Carlo
literature \citep{heinrich2001,giles2008,giles2015}, in the randomized forms of
\citet{mcleish2011}, \citet{rheeglynn2015} and \citet{blanchetglynn2015}. What that
literature asks is when an \emph{unbiased} estimator with finite variance and finite
expected cost exists, and the answer is that it does when the increments decay faster
than the cost of a level grows. The split \eqref{eq:split} is that comparison, and the
condition is $\gamma<2$. The HTMC regime is its complement, and there no such estimator
exists: whichever level probabilities one chooses, either the variance or the expected
cost diverges. We therefore truncate the telescope deliberately. The bias this leaves is
not a defect to be removed but the quantity the optimization has to budget for, and
Theorem~\ref{thm:main} says that the budget is affordable. The same estimator drives the
multilevel Euler--Maruyama scheme of \citet{jacot2026mlem}, where simulating a diffusion
to accuracy $\varepsilon$ costs what one evaluation of the drift at that accuracy costs.
Our result is the optimization counterpart of that one.

\paragraph{Inexact and biased first-order methods.}
Optimization under an oracle that returns an approximate gradient is classical
\citep{daspremont2008,devolder2014,schmidt2011}, and so is stochastic optimization under
a biased oracle \citep{ajalloeianstich2020,hu2020bsgd,demidovich2023}. In all of it the
inexactness is given and the question is how it propagates through the method. Here it is
bought, and the question is what it costs. The two questions meet only through a cost
model, which is what Assumption~\ref{ass:dyadic} supplies. One consequence of the older
line is used directly in Section~\ref{sec:nesterov}: under an inexact oracle the bias of
an accelerated method accumulates over the horizon instead of staying bounded
\citep{devolder2014}, which is the middle term of \eqref{eq:acc-cvx}. What
Proposition~\ref{prop:acc-cvx} adds is its price.

\paragraph{Biased oracles with a cost model.}
The framework of \citet{huwangchenhe2024} is the closest to ours. 
The main difference is that this paper assumes random oracles in the first place, whereas we assume deterministic oracles and show how to upgrade them into random oracles. More precisely, \citet{huwangchenhe2024} assume a very specific structure of random oracles at different levels (or approximating the difference between two levels) inspired by practical examples. They then propose four multilevel estimators that combine these oracles, and compare these different methods. We do not make the same complex assumptions on how the different levels correlate (our levels are deterministic), but we do end up with relatively similar multilevel structures.

A second important difference is that the resulting estimators in \citet{huwangchenhe2024} always have a variance that can be halved by doubling the computational cost (or a finite variance, in which case this same variance/compute tradeoff arises implicitly when choosing the learning rate: halving the learning rate is equivalent to halving the variance at the cost of doubling the compute). In other terms, their estimators are either weakly biased (and therefore $\gamma>2$) or unbiased with $\gamma=2$. The ETMC regime ($\gamma<2$), where the compute for an unbiased estimator with variance $\sigma^2$ has cost $c^\gamma\sigma^{-\gamma}$, is not visible in their setting. This difference is reflected in the fact that in the strictly convex case, they never guarantee a convergence rate that is faster than $\epsilon^{-1}$, whereas we can guarantee faster rates in the ETMC regime. Similarly no rate faster than $2$ appears in the convex case, whereas we observe rates between $\frac{1}{2}$ and $2$. This also explains why they did not use any accelerated method as it would be useless in their setting.

A final, but less fundamental difference, is that they start from a bound on how much the loss values differ from their approximation, whereas we start from a bound on how much their gradient differ. There is no clean equivalence between these two assumptions, instead we have the two implications
\begin{enumerate}
    \item If $\left\Vert C_{\epsilon}-C\right\Vert _{\infty}\leq\epsilon$ and
$C,C_{k}$ are $\beta$-smooth, then $\left\Vert \nabla C_{\epsilon}-\nabla C\right\Vert _{\infty}\leq\sqrt{8\beta\epsilon}$.

    \item If $\left\Vert \nabla C_{\epsilon}-\nabla C\right\Vert _{\infty}\leq\delta$
over a domain of diameter $D$, then $\left\Vert (C_{\epsilon}+K)-C\right\Vert _{\infty}\lesssim D\delta$.
\end{enumerate}
This suggest that if $\epsilon$ and $\delta$ are the errors for the cost and gradient respectively, then the two match up to a power of two: $\epsilon \lesssim \delta \lesssim \sqrt{\epsilon}$. This further limits the comparability of their paper to ours.

\paragraph{Buying accuracy, and accelerating.}
\citet{vandesselglineur2023} study first-order methods whose oracle can be queried at any
accuracy $\delta$ for a cost growing as $\delta^{-r}$, and give the optimal sequence of
accuracies in closed form. Their oracle is deterministic and exposes one knob.
Proposition~\ref{prop:schedule} schedules a pair, and the gain is of another nature: not
a better constant but the disappearance of the logarithm. Random inexactness of this kind
is named there as an open direction. \citet{dvurechensky2016} analyse acceleration under
an oracle that is both biased and noisy, and the split \eqref{eq:acc-cvx} we take as
given is their Theorem~3.4. What Section~\ref{sec:nesterov} adds is again the cost model:
once each channel carries a price, balancing them against $\varepsilon$ makes the horizon
appear in the bill, and it appears with a positive exponent. Multilevel estimators have
also been used inside min-max and variational-inequality methods
\citep{alacaoglu2024}, there to control noise rather than to price accuracy.

\section{Proofs}

\subsection{Proof of Proposition~\ref{prop:oracle}}\label{app:oracle}
%=======================================================================

\subsubsection{A sufficient choice of probabilities $p_k$}
Let $B_k\sim\mathrm{Bernoulli}(p_k)$ be independent and set
\begin{equation}
\tilde f_{\delta,\sigma}=\sum_{k=k_{\min}}^{k_{\max}}\frac{B_k}{p_k}\big(A_k-A_{k-1}\big),
\qquad
p_k=\min\Big\{C\,2^{-(1+\frac\gamma2)k},\,1\Big\},
\end{equation}
for a constant $C$ to be chosen. Since $k_{\min}-1\le-\log_2c$, the algorithm
$A_{k_{\min}-1}$ has compute at most $c^\gamma2^{\gamma(k_{\min}-1)}\le1$ and we take it to
be the constant $0$ function.

\paragraph{Bias.} The sum telescopes in expectation,
\[
\E\big[\tilde f_{\delta,\sigma}\big]=\sum_{k=k_{\min}}^{k_{\max}}\big(A_k-A_{k-1}\big)
=A_{k_{\max}}-A_{k_{\min}-1}=A_{k_{\max}},
\]
so that $\norm{\E[\tilde f_{\delta,\sigma}]-f}\le2^{-k_{\max}}=\delta$, whatever the $p_k$.

\paragraph{Variance.} Since
$\norm{A_k-A_{k-1}}\le\norm{A_k-f}+\norm{f-A_{k-1}}\le2^{-k}+2^{-k+1}=3\cdot2^{-k}$ and the
summands of \eqref{eq:mlmc} are independent,
\[
\E\big\lVert\tilde f_{\delta,\sigma}-\E\tilde f_{\delta,\sigma}\big\rVert^2
=\sum_{k=k_{\min}}^{k_{\max}}\frac{1-p_k}{p_k}\norm{A_k-A_{k-1}}^2
\ \le\ 9\sum_{k=k_{\min}}^{k_{\max}}\frac{2^{-2k}}{p_k}
\ \le\ \frac9C\sum_{k=k_{\min}}^{k_{\max}}2^{(\frac\gamma2-1)k},
\]
where we used $p_k^{-1}\le C^{-1}2^{(1+\frac\gamma2)k}$, which holds in both branches of
the minimum in \eqref{eq:mlmc}.

\paragraph{Compute.} Using instead $p_k\le C2^{-(1+\frac\gamma2)k}$,
\[
\E\big[\Cost(\tilde f_{\delta,\sigma})\big]=\sum_{k=k_{\min}}^{k_{\max}}p_k\Cost(A_k)
\ \le\ c^\gamma\sum_{k=k_{\min}}^{k_{\max}}p_k2^{\gamma k}
\ \le\ C\,c^\gamma\sum_{k=k_{\min}}^{k_{\max}}2^{(\frac\gamma2-1)k}.
\]

\paragraph{The geometric sum.} Since $\gamma>2$ the ratio $2^{\frac\gamma2-1}$ exceeds $1$
and the sum is dominated by its top term,
\[
\sum_{k=k_{\min}}^{k_{\max}}2^{(\frac\gamma2-1)k}
\ \le\ \frac{2^{(\frac\gamma2-1)k_{\max}}}{1-2^{1-\frac\gamma2}}
\ =\ \frac{\delta^{1-\frac\gamma2}}{1-2^{1-\frac\gamma2}} .
\]

\paragraph{Conclusion.} Choosing
$C=\dfrac{9\,\delta^{1-\gamma/2}}{\big(1-2^{1-\gamma/2}\big)\sigma^2}$ makes the variance at
most $\sigma^2$, and then
\[
\E\big[\Cost(\tilde f_{\delta,\sigma})\big]
\ \le\ \frac{9}{\big(1-2^{1-\gamma/2}\big)^2}\,c^\gamma\,\delta^{2-\gamma}\,\sigma^{-2},
\]
which is the claim with $C_\gamma=9\big(1-2^{1-\gamma/2}\big)^{-2}$. \qed

\begin{remark}
The clamping $p_k\le1$ in \eqref{eq:mlmc} costs nothing: both bounds above use only
$p_k\le C2^{-(1+\gamma/2)k}$ and $p_k^{-1}\le C^{-1}2^{(1+\gamma/2)k}$, and each holds
whichever branch of the minimum is active. The clamp does become binding for $\gamma\le2$, where the sum is dominated by $k_{\min}$
rather than $k_{\max}$, and it is then the other two branches of
Proposition~\ref{prop:oracle} that it produces.

\end{remark}

\subsubsection{The optimal probabilities, and the other two branches}
\label{app:etmc}

The computation above fixed the shape $p_k\propto2^{-(1+\gamma/2)k}$ in advance. It is in
fact the optimal one, and seeing why is what produces the remaining two branches. Write
$V_k=\norm{A_k-A_{k-1}}_\infty^2\le9\cdot4^{-k}$ for the squared increment, bounded as
above, and $C_k\le2c^\gamma2^{\gamma k}$ for the compute of drawing it, two evaluations.
The bias is $\delta$ whatever the $p_k$, so only
\[
\Var\tilde f=\sum_k\Big(\frac1{p_k}-1\Big)V_k\le\sum_k\frac{V_k}{p_k},
\qquad
\E[\Cost]=\sum_kp_kC_k
\]
are at stake.

\paragraph{The unconstrained optimum.} Minimizing $\sum_kp_kC_k$ subject to
$\sum_kV_k/p_k\le\sigma^2$ over $p_k>0$ is solved by a Lagrange multiplier:
$p_k\propto\sqrt{V_k/C_k}$, and saturating the constraint gives
\begin{equation}\label{eq:giles}
p_k=\frac{1}{\sigma^2}\sqrt{\frac{V_k}{C_k}}\sum_j\sqrt{V_jC_j},
\qquad
\sum_kp_kC_k=\frac{1}{\sigma^2}\Big(\sum_k\sqrt{V_kC_k}\Big)^{\!2}.
\end{equation}
With the bounds above $\sqrt{V_kC_k}\asymp c^{\gamma/2}2^{(\frac\gamma2-1)k}$ and
$p_k\propto2^{-(1+\frac\gamma2)k}$, which is the shape used above. Note that $p_k$
decreases in $k$.

\paragraph{Where the clamp binds.} A probability cannot exceed one, and since $p_k$
decreases, $p_k\le1$ binds on an initial segment $k\le k_\ast$. There every $B_k$ equals
one and the increments telescope,
\[
\sum_{k\le k_\ast}(A_k-A_{k-1})=A_{k_\ast},
\]
a single deterministic evaluation at accuracy $\delta_\ast=2^{-k_\ast}$, of compute
$(c/\delta_\ast)^\gamma$ and of zero variance. Above $k_\ast$ the probabilities are
interior and \eqref{eq:giles} applies to the remaining levels alone. Optimizing over the
position of the cut gives \eqref{eq:split}, with
$\Sigma=\sum_{\delta\le2^{-k}\le\delta_\ast}2^{(\frac\gamma2-1)k}$, a geometric sum of
ratio $2^{\frac\gamma2-1}$. Everything now turns on that ratio.

\paragraph{$\gamma>2$.} The ratio exceeds one and the sum is dominated by its finest term,
$\Sigma\asymp\delta^{1-\gamma/2}$, which does not depend on $\delta_\ast$. The randomized
term of \eqref{eq:split} is therefore fixed while the prefix $(c/\delta_\ast)^\gamma$
decreases in $\delta_\ast$, so the optimum takes no prefix at all and returns the branch
proved above. The clamped prefix that the explicit $C$ of that proof produces is not this
optimal cut, but above the threshold its cost is dominated by the randomized term, which is
why the two agree.

\paragraph{$\gamma<2$.} The ratio is below one and the sum is dominated by its coarsest
term, $\Sigma\asymp\delta_\ast^{1-\gamma/2}$. Both terms of \eqref{eq:split} now move with
$\delta_\ast$ and in opposite directions, the prefix decreasing and the randomized term
increasing since $2-\gamma>0$. They balance at
\[
\Big(\frac{c}{\delta_\ast}\Big)^{\!\gamma}=\frac{c^\gamma}{\sigma^2}\delta_\ast^{2-\gamma}
\iff\delta_\ast=\sigma,
\]
and either term then equals $(c/\sigma)^\gamma$. The bias $\delta$ has left the bound:
below the threshold, accuracy is bought by computing deterministically down to $\sigma$ and
treating the remainder as noise, and the multilevel construction buys nothing.

\paragraph{$\gamma=2$.} The ratio is one and $\Sigma$ merely counts its terms,
$\Sigma=\log_2(\delta_\ast/\delta)$. Balancing as above still gives $\delta_\ast\asymp\sigma$
and $\Cost\asymp c^2\sigma^{-2}\log^2(\sigma/\delta)$. The logarithm is squared because
$\Sigma$ enters \eqref{eq:split} through its square.

\paragraph{Uniformity in $\gamma$.} The estimate $\Sigma\asymp\delta^{1-\gamma/2}$ of the
branch $\gamma>2$ hides a constant $(1-2^{1-\gamma/2})^{-1}$, of order $(\gamma-2)^{-1}$,
and is therefore not uniform as $\gamma$ approaches the threshold. Two bounds compete: that
one, and the crude $\Sigma\le(K+1)\,\delta^{1-\gamma/2}$ taking every term at the value of
the largest, with $K=\log_2(\delta_\ast/\delta)$. Neither dominates, and
\[
\Sigma\ \asymp\ \delta^{1-\gamma/2}\,
\min\big\{K,\ \big(1-2^{1-\gamma/2}\big)^{-1}\big\} ,
\]
the geometric entry binding when $(\gamma-2)K\gtrsim1$ and the counting one otherwise.
Setting $\gamma=2$ in the branch $\gamma>2$ therefore does not return the branch
$\gamma=2$: the former presumes the geometric entry, which asks
$(\gamma-2)\log_2(\sigma/\delta)\gg1$, and at the threshold the constant
$C_\gamma\asymp(\gamma-2)^{-2}$ is replaced by $\log^2(\sigma/\delta)$, finite where it is
not.

\paragraph{The exponent in $\sigma$.} At fixed $\delta$ the three branches read
$\sigma^{-2}$, $\sigma^{-2}\log^2$ and $\sigma^{-\gamma}$, so $s=\min(2,\gamma)$: variance
is bought either by randomizing, at $\sigma^{-2}$, or by computing deterministically to
accuracy $\sigma$, at $(c/\sigma)^\gamma$, and the estimator takes whichever is cheaper.
$\gamma=2$ is where the two mechanisms cost the same. \qed

\subsection{Proof of Theorem~\ref{thm:main}}\label{app:main}

Conditionally on the current iterate $y_t$, decompose the oracle into its exact, biased and
noisy parts,
\[
\tilde f_{\delta,\sigma}(y_t)=f(y_t)+b+\xi,
\qquad
b:=\E\big[\tilde f_{\delta,\sigma}(y_t)\mid y_t\big]-f(y_t),
\qquad
\xi:=\tilde f_{\delta,\sigma}(y_t)-\E\big[\tilde f_{\delta,\sigma}(y_t)\mid y_t\big],
\]
so that $b$ is deterministic given $y_t$ with $\norm b\le\delta$, while $\xi$ is centred with
$\E[\norm\xi^2\mid y_t]\le\sigma^2$. We write $a_t=\big(\E\norm{y_t-y^\ast}^2\big)^{1/2}$ and
$\tilde\sigma^2=\sigma^2+2\delta^2$.

\begin{lemma}[One step]\label{lem:onestep}
Let $y_{t+1}=y_t-\eta g$ with $g=f(y_t)+b+\xi$ as above and $\eta\le\frac1{4\beta}$. Then
\[
\E\big[\norm{y_{t+1}-y^\ast}^2\mid y_t\big]\ \le\
(1-\eta\mu)\norm{y_t-y^\ast}^2-\eta\big(\mathscr{L}(y_t)-\mathscr{L}^\ast\big)
+2\eta\delta\norm{y_t-y^\ast}+\eta^2\tilde\sigma^2 ,
\]
with $\mu=0$ in the merely convex case.
\end{lemma}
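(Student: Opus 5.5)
The plan is to expand the square, take the conditional expectation given $y_t$, and bound each resulting term by convexity, Cauchy--Schwarz and smoothness. Write $e_t=y_t-y^\ast$. From $y_{t+1}-y^\ast=e_t-\eta g$ we get
\[
\norm{y_{t+1}-y^\ast}^2=\norm{e_t}^2-2\eta\ip{g}{e_t}+\eta^2\norm g^2 .
\]
Given $y_t$, the vectors $e_t$, $f(y_t)$ and $b$ are deterministic and $\E[\xi\mid y_t]=0$. Hence the cross term has conditional expectation $-2\eta\ip{f(y_t)}{e_t}-2\eta\ip{b}{e_t}$. The quadratic term has conditional expectation $\norm{f(y_t)+b}^2+\E[\norm\xi^2\mid y_t]$, the mixed terms vanishing for the same reason.

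\textbf{Cross term.} For the gradient part I will use the first-order characterization of $\mu$-strong convexity at $y^\ast$:
\[
\mathscr{L}^\ast\ge\mathscr{L}(y_t)-\ip{f(y_t)}{e_t}+\tfrac\mu2\norm{e_t}^2 ,
\]
with $\mu=0$ in the convex case. This gives
\[
-2\eta\ip{f(y_t)}{e_t}\le-2\eta\big(\mathscr{L}(y_t)-\mathscr{L}^\ast\big)-\eta\mu\norm{e_t}^2 .
\]
For the bias part, Cauchy--Schwarz with $\norm b\le\delta$ gives $-2\eta\ip b{e_t}\le2\eta\delta\norm{e_t}$.

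\textbf{Quadratic term.} By $\norm{u+v}^2\le2\norm u^2+2\norm v^2$, $\norm b\le\delta$ and $\E[\norm\xi^2\mid y_t]\le\sigma^2$, its conditional expectation is at most
\[
2\norm{f(y_t)}^2+2\delta^2+\sigma^2=2\norm{f(y_t)}^2+\tilde\sigma^2 .
\]
For a convex $\beta$-smooth function, $\norm{\nabla\mathscr{L}(y)}^2\le2\beta\big(\mathscr{L}(y)-\mathscr{L}^\ast\big)$. Multiplying by $\eta^2$, the quadratic term therefore contributes at most $4\beta\eta^2\big(\mathscr{L}(y_t)-\mathscr{L}^\ast\big)+\eta^2\tilde\sigma^2$.

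\textbf{Assembly.} The one delicate step is the bookkeeping of the loss gap. The cross term supplies $-2\eta(\mathscr{L}-\mathscr{L}^\ast)$ and the quadratic term costs $4\beta\eta^2(\mathscr{L}-\mathscr{L}^\ast)$. Under $\eta\le\frac1{4\beta}$ we have $4\beta\eta^2\le\eta$, so the net coefficient is at most $-\eta$. This is exactly where the step-size restriction, and the factor $2$ in $\tilde\sigma^2$, come from. Summing the remaining contributions yields
\[
\E\big[\norm{y_{t+1}-y^\ast}^2\mid y_t\big]\le(1-\eta\mu)\norm{e_t}^2-\eta\big(\mathscr{L}(y_t)-\mathscr{L}^\ast\big)+2\eta\delta\norm{e_t}+\eta^2\tilde\sigma^2,
\]
which is the claimed inequality.
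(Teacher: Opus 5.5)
Your proposal is correct and matches the paper's proof essentially step for step. You expand the square and remove the noise cross terms by conditioning on $y_t$. You bound the gradient term by strong convexity, the bias term by Cauchy--Schwarz, and $\norm{f(y_t)+b}^2$ by $2\norm{f(y_t)}^2+2\delta^2$ together with $\norm{\nabla\mathscr{L}}^2\le2\beta(\mathscr{L}-\mathscr{L}^\ast)$. Finally, you absorb $4\beta\eta^2\le\eta$ using $\eta\le\frac1{4\beta}$, which leaves the net coefficient $-\eta$ on the loss gap.
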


\begin{proof}
Expanding and conditioning on $y_t$, so that $\E[\ip{f(y_t)+b}{\xi}\mid y_t]=0$,
\[
\E\big[\norm{y_{t+1}-y^\ast}^2\mid y_t\big]
=\norm{y_t-y^\ast}^2-2\eta\ip{f(y_t)+b}{y_t-y^\ast}
+\eta^2\Big(\norm{f(y_t)+b}^2+\E\big[\norm\xi^2\mid y_t\big]\Big).
\]
We bound the three remaining terms:
\begin{align*}
-2\eta\ip{f(y_t)}{y_t-y^\ast}
&\le-2\eta\big(\mathscr{L}(y_t)-\mathscr{L}^\ast\big)-\eta\mu\norm{y_t-y^\ast}^2, \\
-2\eta\ip{b}{y_t-y^\ast}
&\le 2\eta\delta\norm{y_t-y^\ast}, \\
\norm{f(y_t)+b}^2\ \le\ 2\norm{f(y_t)}^2+2\norm b^2
&\le 4\beta\big(\mathscr{L}(y_t)-\mathscr{L}^\ast\big)+2\delta^2,
\end{align*}
where the first line is $\mu$-strong convexity, the second is Cauchy--Schwarz, and the
third uses $\norm{f(y_t)}^2\le2\beta(\mathscr{L}(y_t)-\mathscr{L}^\ast))$, itself obtained by evaluating
$\beta$-smoothness at $z=y-\beta^{-1}\nabla \mathscr{L}(y_t)$. Together with
$\E[\norm\xi^2\mid y_t]\le\sigma^2$, the coefficient of $\mathscr{L}(y_t)-\mathscr{L}^\ast$ is
\[
-2\eta+4\beta\eta^2=-2\eta\big(1-2\beta\eta\big)\ \le\ -\eta
\qquad\text{whenever}\qquad \eta\le\tfrac1{4\beta}. \qedhere
\]
\end{proof}

Taking total expectations gives the \emph{master recursion}
\begin{equation}\label{eq:master-app}
a_{t+1}^2\ \le\ (1-\eta\mu)\,a_t^2-\eta\,\E\big[\mathscr{L}(y_t)-\mathscr{L}^\ast\big]
+2\eta\delta\,a_t+\eta^2\tilde\sigma^2 .
\end{equation}

\subsubsection{Convex case}

Set $\mu=0$ in \eqref{eq:master-app}, rearrange and sum over $t<T$; the differences
$a_t^2-a_{t+1}^2$ telescope to at most $d_0^2$, so after dividing by $\eta T$,
\begin{equation}\label{eq:cvx-master}
\frac1T\sum_{t<T}\E\big[\mathscr{L}(y_t)-\mathscr{L}^\ast\big]
\ \le\ \frac{d_0^2}{\eta T}+2\delta\,\bar a+\eta\tilde\sigma^2,
\qquad \bar a:=\frac1T\sum_{t<T}a_t ,
\end{equation}
and Jensen's inequality transfers the bound to $\mathscr{L}(\bar y_T)$. Choose
\begin{equation}\label{eq:cvx-params}
T=\Big\lceil\frac{4d_0^2}{\eta\varepsilon}\Big\rceil,
\qquad
\delta=\frac{\varepsilon}{16\,d_0},
\qquad
\sigma^2=\frac{\varepsilon}{4\eta},
\end{equation}
which give the substitution identities
\begin{equation}\label{eq:cvx-ids}
\eta T=\frac{4d_0^2}{\varepsilon},
\qquad
\frac{d_0^2}{\eta T}=\frac\varepsilon4,
\qquad
\eta\sigma^2=\frac\varepsilon4,
\qquad
\eta\delta T=\frac{d_0}{4} .
\end{equation}

\begin{lemma}[The iterates stay in a ball]\label{lem:radius}
Under \eqref{eq:cvx-params}, $a_t\le2d_0$ for every $t\le T$.
\end{lemma}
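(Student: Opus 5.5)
The plan is a strong induction on $t$ driven by the master recursion \eqref{eq:master-app} with $\mu=0$. First I would discard the progress term $-\eta\,\E[\mathscr{L}(y_t)-\mathscr{L}^\ast]$, which is nonpositive. What remains is the growth inequality
\[
a_{s+1}^2\ \le\ a_s^2+2\eta\delta\,a_s+\eta^2\tilde\sigma^2 ,
\]
and summing it from $s=0$ to $t-1$ gives, for every $t\le T$,
\[
a_t^2\ \le\ d_0^2+2\eta\delta\sum_{s<t}a_s+t\,\eta^2\big(\sigma^2+2\delta^2\big).
\]
The base case is $a_0=d_0\le2d_0$. For the inductive step, assume $a_s\le2d_0$ for all $s<t$. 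Then the bias sum is at most $4\eta\delta\,t\,d_0\le4\eta\delta T d_0$, and the right-hand side no longer involves the unknown $a_s$. It is then enough to show that it is at most $4d_0^2$.

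Next I would bound each of the three contributions using \eqref{eq:cvx-params}. Because of the ceiling, the identities \eqref{eq:cvx-ids} hold only up to one extra step, $\eta T\le 4d_0^2/\varepsilon+\eta$. To absorb this slack I need $\eta\le d_0^2/\varepsilon$.
\begin{itemize}
\item When $\gamma>2$, this is part of the step-size condition \eqref{eq:eta}.
\item When $\gamma\le2$ we have $\eta=1/(4\beta)$. Here I would use that the statement is trivial unless $\varepsilon<\mathscr{L}(y_0)-\mathscr{L}^\ast\le\beta d_0^2/2$ (by smoothness, since $\nabla\mathscr{L}(y^\ast)=0$). In the nontrivial case this gives $\eta\le d_0^2/(2\varepsilon)$.
\end{itemize}
So in all cases $\eta T\le5d_0^2/\varepsilon$ and $\eta\le d_0^2/\varepsilon$. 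With these bounds:
\begin{itemize}
\item \emph{Bias:} $4\eta\delta Td_0\le4\delta d_0\big(4d_0^2/\varepsilon+\eta\big)=d_0^2+\eta\varepsilon/4\le\frac54d_0^2$.
\item \emph{Noise:} $T\eta^2\sigma^2=\eta T\cdot\varepsilon/4\le\frac54d_0^2$.
\item \emph{Squared bias inside $\tilde\sigma^2$:} $2T\eta^2\delta^2=2\,\eta T\cdot\eta\,\varepsilon^2/(256d_0^2)\le\frac{10}{256}d_0^2$.
\end{itemize}
Adding $d_0^2$, the total is below $3.6\,d_0^2<4d_0^2$. This gives $a_t\le2d_0$ and closes the induction.

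The main obstacle is that the bias term is linear in the very quantity $a_s$ being controlled. A naive Grönwall-type treatment would let it compound over $T\sim\eta^{-1}$ steps. Strong induction with the a priori ceiling $2d_0$ avoids this, and it works only because $\delta$ was chosen so that $\eta\delta T\,d_0$ is a fixed fraction of $d_0^2$ (the identity $\eta\delta T=d_0/4$). The one delicate bookkeeping point is the ceiling in $T$: it must not push the bound past $4d_0^2$, which is exactly why the argument needs $\eta\lesssim d_0^2/\varepsilon$, either from \eqref{eq:eta} or from the trivial-regime reduction.
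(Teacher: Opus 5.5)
Your argument is the paper's own: strong induction on the master recursion with the loss term dropped, the a priori bound $a_s\le2d_0$ substituted into the bias sum, and the three remaining contributions bounded through \eqref{eq:cvx-params}. Your handling of the ceiling, $\eta T\le4d_0^2/\varepsilon+\eta$, is more careful than the paper's, which uses $\eta T=4d_0^2/\varepsilon$ as an exact identity. Your arithmetic is correct.

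The step that fails is the reduction for $\gamma\le2$. The inequality $\varepsilon\ge\mathscr{L}(y_0)-\mathscr{L}^\ast$ says nothing about $a_t$, so it does not make this lemma trivial. It does not even trivialize the loss guarantee, since $\bar y_T$ averages iterates other than $y_0$ once $T\ge2$. Worse, without a condition of the form $\eta\varepsilon\lesssim d_0^2$ the lemma is false. Take $\mathscr{L}=\frac\beta2\norm{\cdot-y^\ast}^2$, $\eta=\frac1{4\beta}$, and a bias of norm $\delta$ at $y_0$ pointing toward $y^\ast$. Then $\norm{\E y_1-y^\ast}=\frac34d_0+\eta\delta=\frac34d_0+\frac{\varepsilon}{64\beta d_0}$, and noise only increases $a_1$. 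This exceeds $2d_0$ as soon as $\varepsilon>80\beta d_0^2$, which is a regime where $T=1$.

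So $\eta\le d_0^2/\varepsilon$ has to be assumed, not derived from a triviality argument. This is what the paper does. Its proof invokes the second branch of \eqref{eq:eta}, and below the threshold it takes $\eta=\frac1{4\beta}$ only ``once $\varepsilon\le4\beta d_0^2$'', which is exactly $\eta\le d_0^2/\varepsilon$. State that hypothesis explicitly for $\gamma\le2$, and your proof is complete.
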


\begin{proof}
Strong induction. The case $t=0$ is clear. Assume $a_s\le2d_0$ for all $s\le t$. Dropping
the negative loss term from \eqref{eq:master-app} and summing from $s=0$ to $t\le T-1$,
\[
a_{t+1}^2\ \le\ d_0^2+2\eta\delta T\,(2d_0)+T\eta^2\sigma^2+2T\eta^2\delta^2 .
\]
By \eqref{eq:cvx-ids} the second term is $4(\eta\delta T)d_0=d_0^2$ and the third is
$(\eta T)(\eta\sigma^2)=d_0^2$. For the last, the second branch of \eqref{eq:eta} gives
$\eta\varepsilon\le d_0^2$ and hence
\[
2T\eta^2\delta^2=2(\eta\delta T)(\eta\delta)
=2\cdot\frac{d_0}{4}\cdot\frac{\eta\varepsilon}{16d_0}
=\frac{\eta\varepsilon}{32}
\ \le\ \frac{d_0^2}{32}.
\]
Altogether $a_{t+1}^2\le3d_0^2+\frac{d_0^2}{32}<4d_0^2$, closing the induction.
\end{proof}

\paragraph{Loss.} Inserting $\bar a\le2d_0$ and \eqref{eq:cvx-ids} into
\eqref{eq:cvx-master},
\[
\E\big[\mathscr{L}(\bar y_T)-\mathscr{L}^\ast\big]
\ \le\ \underbrace{\frac{d_0^2}{\eta T}}_{=\varepsilon/4}
+\underbrace{4\delta d_0}_{=\varepsilon/4}
+\underbrace{\eta\sigma^2}_{=\varepsilon/4}
+\underbrace{2\eta\delta^2}_{\le\varepsilon/128}
\ <\ \varepsilon,
\]
the last term being controlled, again by $\eta\varepsilon\le d_0^2$, through
$2\eta\delta^2=\frac{\eta\varepsilon^2}{128\,d_0^2}\le\frac{\varepsilon}{128}$.

\paragraph{Cost.} By Proposition~\ref{prop:oracle} and $\sigma^{-2}=4\eta/\varepsilon$,
\[
\E[\Cost]=T\cdot C_\gamma c^\gamma\delta^{2-\gamma}\sigma^{-2}
=\frac{4d_0^2}{\eta\varepsilon}\cdot C_\gamma c^\gamma
\Big(\frac{\varepsilon}{16d_0}\Big)^{2-\gamma}\cdot\frac{4\eta}{\varepsilon}
=C_\gamma c^\gamma\,\frac{16d_0^2}{\varepsilon^2}
\Big(\frac{\varepsilon}{16d_0}\Big)^{2-\gamma},
\]
where the step size has cancelled between $T\propto\eta^{-1}$ and
$\sigma^{-2}\propto\eta$. Simplifying the numerical factor,
\[
\frac{16d_0^2}{\varepsilon^2}\Big(\frac{\varepsilon}{16d_0}\Big)^{2-\gamma}
=16\,d_0^2\,\varepsilon^{-2}\cdot\varepsilon^{2-\gamma}(16d_0)^{\gamma-2}
=16^{\gamma-1}d_0^\gamma\varepsilon^{-\gamma},
\]
so that $\E[\Cost]\le\Lambda_\gamma(cd_0/\varepsilon)^\gamma$ with
$\Lambda_\gamma=16^{\gamma-1}C_\gamma$.

The other two branches of Proposition~\ref{prop:oracle} give the remaining cases with no
other change, $T$, $\delta$ and $\sigma$ being independent of $\gamma$ and only the
per-call price differing. With $(c/\sigma)^\gamma$ one gets
$T\,c^\gamma(4\eta/\varepsilon)^{\gamma/2}\asymp c^\gamma d_0^2\,\eta^{\frac\gamma2-1}
\varepsilon^{-(1+\frac\gamma2)}$, whose exponent in $\eta$ is now negative, so one takes
the largest admissible step $\eta=\frac1{4\beta}$, available once
$\varepsilon\le4\beta d_0^2$ as in the deterministic case below, and $\beta^{1-\gamma/2}$
appears. With $c^2\sigma^{-2}\log^2\frac\sigma\delta$ one gets
$16\,c^2d_0^2\varepsilon^{-2}\log^2\frac\sigma\delta$ and
$\frac\sigma\delta=8d_0/\sqrt{\eta\varepsilon}$, so the step size survives only inside the
logarithm.

\paragraph{The deterministic oracle.} Running \eqref{eq:iter} with $A_{k_{\max}}$ instead
amounts to $\sigma=0$: the variance terms in \eqref{eq:master-app} vanish, so
Lemma~\ref{lem:radius} and the loss bound hold a fortiori, but $\eta$ is no longer free and
one takes the largest admissible step, $\eta=\frac1{4\beta}$ once
$\varepsilon\le4\beta d_0^2$, whence $T=\lceil16\beta d_0^2/\varepsilon\rceil$. Each step
costs the constant $(c/\delta)^\gamma=(16cd_0/\varepsilon)^\gamma$, so
\[
\Cost=T\Big(\frac{16cd_0}{\varepsilon}\Big)^\gamma
\le 16^{\gamma+1}\beta\,c^\gamma d_0^{\gamma+2}\,\varepsilon^{-(\gamma+1)} .
\]
Here no $\eta$ cancels, $\beta$ survives, and the exponent is $\gamma+1$. \qed

\subsubsection{Strongly convex case}

Weight \eqref{eq:master-app} by $w_t:=q^{-(t+1)}$ with $q:=1-\eta\mu$:
\[
q^{-(t+1)}a_{t+1}^2 \le\
q^{-t}a_t^2
-\eta\,w_t\,\E\big[\mathscr{L}(y_t)-\mathscr{L}^\ast\big]
+w_t\big(2\eta\delta a_t+\eta^2\tilde\sigma^2\big) .
\]
The telescoping is therefore exact, and summing over $t<T$ leaves $q^{-T}a_T^2\ge0$ on the
left, which we discard, against $q^{0}a_0^2=d_0^2$ on the right:
\[
\eta\sum_{t<T}w_t\,\E\big[\mathscr{L}(y_t)-\mathscr{L}^\ast\big]
\ \le\ d_0^2+\sum_{t<T}w_t\big(2\eta\delta a_t+\eta^2\tilde\sigma^2\big) .
\]
Dividing by $\eta W_T$ with $W_T=\sum_{t<T}w_t=\frac{q^{-T}-1}{\eta\mu}$, so that the
weights $w_t/W_T$ sum to one, and applying Jensen to
$\hat y_T=W_T^{-1}\sum_{t<T}w_ty_t$,
\begin{equation}\label{eq:sc-master}
\E\big[\mathscr{L}(\hat y_T)-\mathscr{L}^\ast\big]\ \le\
\underbrace{2\mu d_0^2q^{T}}_{\text{contraction}}
+\underbrace{2\delta\,\bar a_w}_{\text{bias}}
+\underbrace{\eta\tilde\sigma^2}_{\text{noise}},
\qquad
\bar a_w:=\frac1{W_T}\sum_{t<T}w_ta_t ,
\end{equation}
where we used $\frac{d_0^2}{\eta W_T}=\mu d_0^2\frac{q^T}{1-q^T}\le2\mu d_0^2q^T$, valid
once $q^T\le\frac12$. The three terms are the initial distance forgotten at the contraction
rate, the bias times the typical distance to the optimum, and the noise injected per unit
of flow time. Everything hinges on the fact that $\bar a_w$ is \emph{not} of order $d_0$.

\begin{lemma}[Noise floor]\label{lem:floor}
Set $\rho:=1-\frac{\eta\mu}{2}$. For any $V$ satisfying
\begin{equation}\label{eq:floor}
V^2\ \ge\ \frac{4\delta^2}{\mu^2}+\frac{2\eta\tilde\sigma^2}{\mu} ,
\end{equation}
one has $a_t^2\le\rho^td_0^2+V^2$, and in particular $a_t\le\rho^{t/2}d_0+V$, for every
$t\le T$.
\end{lemma}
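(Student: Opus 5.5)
The plan is to drop the loss term from the master recursion \eqref{eq:master-app}, absorb the bias term into the contraction with Young's inequality, and run an induction on the resulting affine recursion, whose fixed point is exactly the right-hand side of \eqref{eq:floor}.

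Since $\mathscr{L}(y_t)-\mathscr{L}^\ast\ge0$, the recursion \eqref{eq:master-app} gives
\[
a_{t+1}^2\le(1-\eta\mu)a_t^2+2\eta\delta a_t+\eta^2\tilde\sigma^2 .
\]
The cross term $2\eta\delta a_t$ is the only obstacle to a linear recursion. I split it with Young's inequality, using the weight that gives up exactly half of the contraction:
\[
2\eta\delta a_t\le\frac{\eta\mu}{2}a_t^2+\frac{2\eta\delta^2}{\mu}.
\]
This yields the affine recursion
\[
a_{t+1}^2\le\rho\,a_t^2+\eta K,
\qquad
K:=\frac{2\delta^2}{\mu}+\eta\tilde\sigma^2,
\qquad
\rho=1-\frac{\eta\mu}{2}.
\]
Because $\eta\le\frac1{4\beta}\le\frac1{4\mu}$, the factor $\rho$ lies in $(0,1)$. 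The fixed point of $x\mapsto\rho x+\eta K$ is
\[
x^\ast=\frac{\eta K}{1-\rho}=\frac{2K}{\mu}=\frac{4\delta^2}{\mu^2}+\frac{2\eta\tilde\sigma^2}{\mu},
\]
which is precisely the lower bound on $V^2$ in \eqref{eq:floor}. This identification is what fixes the choice of the Young weight.

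The induction then goes as follows. At $t=0$ the bound holds because $a_0=d_0$. Assume $a_t^2\le\rho^td_0^2+V^2$. Then
\[
a_{t+1}^2\le\rho^{t+1}d_0^2+\rho V^2+\eta K .
\]
It remains to check $\rho V^2+\eta K\le V^2$. This is equivalent to $\eta K\le\frac{\eta\mu}{2}V^2$, that is $V^2\ge 2K/\mu$, which is assumption \eqref{eq:floor}.

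The final claim $a_t\le\rho^{t/2}d_0+V$ follows from $\sqrt{u+v}\le\sqrt u+\sqrt v$. No step is genuinely hard; the only point needing care is the Young weight $\eta\mu/2$. A larger weight would destroy the contraction, and a smaller one would inflate the floor beyond \eqref{eq:floor}.
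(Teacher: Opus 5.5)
Your proof is correct and follows essentially the same route as the paper: drop the loss term, absorb the bias by Young's inequality with weight $\frac{\eta\mu}{2}$, and use \eqref{eq:floor} to get $a_{t+1}^2\le\rho\,a_t^2+(1-\rho)V^2$. The only difference is that the paper unrolls this as a convex combination, obtaining the slightly sharper $a_t^2\le\rho^t d_0^2+(1-\rho^t)V^2$, whereas your induction proves the stated bound directly; this is immaterial for the lemma.
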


\begin{proof}
Drop the negative loss term from \eqref{eq:master-app} and absorb the bias into the contraction
by Young's inequality $2\delta a\le\frac\mu2a^2+\frac{2\delta^2}{\mu}$:
\[
a_{t+1}^2\ \le\ \Big(1-\frac{\eta\mu}2\Big)a_t^2
+\frac{2\eta\delta^2}{\mu}+\eta^2\tilde\sigma^2
\ =\ \rho\,a_t^2+(1-\rho)\Big(\frac{4\delta^2}{\mu^2}+\frac{2\eta\tilde\sigma^2}{\mu}\Big)
\ \le\ \rho\,a_t^2+(1-\rho)V^2 ,
\]
since $1-\rho=\frac{\eta\mu}2$. The right-hand side is a convex combination of $a_t^2$ and
$V^2$, so $a_t^2\le\rho^td_0^2+(1-\rho^t)V^2$.
\end{proof}

Any admissible $V$ is a radius within which the iterates settle once the contraction has
run its course, a \emph{noise floor}. Its role is to replace $d_0$ in the bias term of
\eqref{eq:sc-master}: in the convex case $\bar a\asymp d_0$ forces
$\delta\asymp\varepsilon/d_0$, whereas here $\bar a_w\asymp V$ allows
$\delta\asymp\varepsilon/V$. The whole gain of part~(b) is this substitution, so one wants
the smallest admissible $V$.

\begin{remark}
Young's inequality is what makes $V$ explicit. Bounding $2\delta a_t$ by
$2\delta\max_sa_s$ instead would define $V$ implicitly through $\max_sa_s$, and the
resulting fixed point does not close while $d_0\gg V$. Absorbing the bias into the
contraction costs only a factor two in the rate.
\end{remark}

\paragraph{Hyperparameters.} Set
\begin{equation}\label{eq:sc-params}
V:=\sqrt{\frac\varepsilon\mu},
\qquad
\delta:=\frac{\sqrt{\mu\varepsilon}}8,
\qquad
\sigma^2:=\frac{\varepsilon}{8\eta},
\qquad
T:=\Big\lceil\frac8{\eta\mu}\log\frac{16d_0}{V}\Big\rceil ,
\end{equation}
and note that $V\le d_0$ is exactly the hypothesis $\varepsilon\le\mu d_0^2$ of
Theorem~\ref{thm:main}(b). Three verifications remain.

\smallskip
\emph{(i) $V$ is admissible in \eqref{eq:floor}.} Using
$\eta\le\frac1{4\beta}\le\frac1{4\mu}$ and $\tilde\sigma^2=\sigma^2+2\delta^2$,
\[
\frac{4\delta^2}{\mu^2}=\frac{V^2}{16},
\qquad
\frac{2\eta\sigma^2}{\mu}=\frac{V^2}4,
\qquad
\frac{4\eta\delta^2}{\mu}\le\frac{\delta^2}{\mu^2}=\frac{V^2}{64},
\]
and $\frac1{16}+\frac14+\frac1{64}<1$.

\smallskip
\emph{(ii) The weighted average sits at the floor, $\bar a_w<2V$.} Split the sum at
$T_0=\lceil T/2\rceil$. The early iterates carry almost no weight,
\[
\frac1{W_T}\sum_{t<T_0}w_t=\frac{q^{-T_0}-1}{q^{-T}-1}\ \le\ 2q^{T-T_0}\ \le\ 4q^{T/2},
\]
while $a_t\le d_0+V\le2d_0$ throughout; the late ones are already at the floor,
$a_t\le\rho^{T_0/2}d_0+V\le\rho^{T/4}d_0+V$ for $t\ge T_0$. Hence
\[
\bar a_w\ \le\ 8q^{T/2}d_0+\rho^{T/4}d_0+V .
\]
Both $q^{T/2}\le e^{-\eta\mu T/2}$ and $\rho^{T/4}\le e^{-\eta\mu T/8}$ are at most
$e^{-\eta\mu T/8}\le\frac{V}{16d_0}$ by the choice of $T$, so
$\bar a_w\le\frac V2+\frac V{16}+V<2V$. The same estimate gives
$q^T\le(\frac{V}{16d_0})^8\le\frac12$, as \eqref{eq:sc-master} requires.

\smallskip
\emph{(iii) The three terms of \eqref{eq:sc-master} sum to less than $\varepsilon$.}
\[
\underbrace{2\mu d_0^2q^T\le\frac{\mu V^2}8=\frac\varepsilon8}_{\text{contraction}},
\qquad
\underbrace{2\delta\bar a_w\le4\delta V=\frac\varepsilon2}_{\text{bias}},
\qquad
\underbrace{\eta\sigma^2+2\eta\delta^2\le\frac\varepsilon8+\frac{\delta^2}{2\mu}
=\frac\varepsilon8+\frac\varepsilon{128}}_{\text{noise}} .
\]

\paragraph{Cost.} From \eqref{eq:sc-params}, $\sigma^{-2}=8\eta/\varepsilon$ and
$\log\frac{16d_0}{V}=\frac12\log\frac{256\,\mu d_0^2}{\varepsilon}$, so that
\[
T\sigma^{-2}\ \le\ \Big(\frac4{\eta\mu}\log\frac{256\,\mu d_0^2}{\varepsilon}+1\Big)
\frac{8\eta}{\varepsilon}
\ \lesssim\ \frac1{\mu\varepsilon}\Big(1+\log\frac{\mu d_0^2}{\varepsilon}\Big),
\]
the step size cancelling once more, while
$\delta^{2-\gamma}=8^{\gamma-2}(\mu\varepsilon)^{-\frac{\gamma-2}2}$. \\
Multiplying the two,
\[
\E[\Cost]=T\cdot C_\gamma c^\gamma\delta^{2-\gamma}\sigma^{-2}
\ \lesssim\ 8^{\gamma-2}C_\gamma\,c^\gamma\,(\mu\varepsilon)^{-\gamma/2}
\Big(1+\log\frac{\mu d_0^2}{\varepsilon}\Big),
\]
which is the claim with $\Lambda'_\gamma=256\log2\cdot8^{\gamma-2}C_\gamma$.
. 
Below the threshold the same substitution applies: $T\sigma^{-2}$ becomes
$T\sigma^{-\gamma}$, which carries $\eta^{\frac\gamma2-1}$ since
$\sigma^2=\varepsilon/(8\eta)$, so $\eta=\frac1{4\beta}$ is taken and the conditioning
returns as $\kappa^{1-\gamma/2}$. At $\gamma=2$ it returns instead as $\log^2\kappa$, the
ratio $\sigma/\delta$ being $\sqrt{8/(\eta\mu)}$ there.
\qed

\begin{remark}
The logarithm produced here is exactly the factor removed by
Proposition~\ref{prop:schedule}: it enters as the number of steps multiplying a constant
per-step cost, through $T\sigma^{-2}$, and disappears as soon as $\delta$ and $\sigma$ are
allowed to vary along the trajectory.
\end{remark}

\begin{remark}
    For the strongly convex case, everything hinges on $\bar a_w$. A bias $\delta$ perturbs the loss by $\delta$ times
the typical distance to the optimum, and in the convex case that distance is $d_0$,
which forces $\delta\asymp\varepsilon/d_0$. Here it is not: \eqref{eq:master-app} shows
that the iterates contract geometrically until they settle within a radius
$V\asymp\delta/\mu+\sqrt{\eta\tilde\sigma^2/\mu}$ of the optimum, a \emph{noise
floor} at which the restoring force $\mu V$ ceases to dominate the bias $\delta$,
and the geometric weights place their mass on precisely that phase. Hence $\bar a_w\asymp V$ rather than $d_0$, and taking $V=\sqrt{\varepsilon/\mu}$ relaxes
the bias budget to $\delta\asymp\sqrt{\mu\varepsilon}$. Since the per-call compute
varies as $\delta^{2-\gamma}$ with a negative exponent, a coarser oracle is a
cheaper one, and this substitution is the whole of the exponent $\gamma/2$. The
hypothesis $\varepsilon\le\mu d_0^2$ says exactly $V\le d_0$. \qed

\end{remark}

\medskip

\begin{remark}[\textbf{Every accuracy is admissible}]\label{rem:acc}
    The second branch of \eqref{eq:eta} has no
effect on the bounds, which are $\eta$-free~. It only keeps $\eta T=4d_0^2/\varepsilon$
exact when $\varepsilon$ is large, so that the theorem holds for every
$\varepsilon>0$ rather than for $\varepsilon$ small enough. That range is of no
practical interest, since $\varepsilon\ge\beta d_0^2/2$ already makes $y_0$ itself an
$\varepsilon$-minimizer, but it costs nothing to cover. Part~(b) is restricted to
$\varepsilon\le\mu d_0^2$ for a different reason: that inequality reads $V\le d_0$
for the noise floor $V=\sqrt{\varepsilon/\mu}$ of the proof, and it is also exactly
the range in which the bound of~(b) beats that of~(a). The floor lies below the
starting point precisely when it is worth exploiting.
\end{remark}

\medskip
\begin{remark}[\textbf{What randomization buys, and what it does not.}]\label{rem:gain}
    Two effects should not be
confused. The noise floor relaxes the bias budget from $\varepsilon/d_0$ to
$\sqrt{\mu\varepsilon}$ and, since bias costs $\delta^{2-\gamma}$ with a negative
exponent, halves the exponent from $\gamma$ to $\gamma/2$~. This is a property of the
trajectory and benefits the deterministic method just as much. Randomization buys
something else. In the convex case it buys a full power of $\varepsilon$, from
$\gamma+1$ down to $\gamma$. In the strongly convex case it buys the conditioning:
the same method driven by $A_{k_{\max}}$ costs
$\kappa\log\frac{\mu d_0^2}{\varepsilon}\big(c/\sqrt{\mu\varepsilon}\big)^{\gamma}$,
a factor $\kappa$ more, so that the bound of part~(b) carries no condition number at
all.
\end{remark}

\subsection{Proof of Proposition~\ref{prop:schedule}}\label{app:schedule}

\paragraph{The small-step limit.} Per unit of flow time the iteration \eqref{eq:iter} makes
$1/\eta$ calls, each contributing $\eta b$ to the drift and $\eta^2\sigma^2$ to the
variance. These accumulate to a drift $b$ and a variance $\eta\sigma^2$ per unit time.
Writing $\nu^2(t):=\eta\sigma^2(t)$ for this \emph{diffusion rate}, which stays finite as
$\eta\to0$ while $\sigma^2\to\infty$, the limit of \eqref{eq:iter} is
\begin{equation}\label{eq:sde}
dy_t=-\big(f(y_t)+b_t\big)\,dt+dM_t,
\qquad
\norm{b_t}\le\delta(t),
\qquad
d\langle M\rangle_t=\nu^2(t)\,dt ,
\end{equation}
for a continuous martingale $M$. The compute rate is likewise
\[
\frac1\eta\cdot C_\gamma c^\gamma\delta(t)^{2-\gamma}\sigma(t)^{-2}
=C_\gamma c^\gamma\,\delta(t)^{2-\gamma}\nu(t)^{-2},
\]
again free of $\eta$ and with the schedule of Proposition~\ref{prop:schedule},
$\nu^2(t)=\frac\varepsilon4e^{\mu(T_c-t)/4}$.

\paragraph{Step 1: the master inequality.} Write $a_t^2=\E\norm{y_t-y^\ast}^2$. Itô's
formula applied to \eqref{eq:sde} kills the martingale term and contributes the quadratic
variation, so that
\[
\frac{d}{dt}a_t^2
=-2\,\E\ip{y_t-y^\ast}{f(y_t)}-2\,\E\ip{y_t-y^\ast}{b_t}+\nu^2(t)
\ \le\ -2\,\E\big[\mathscr{L}(y_t)-\mathscr{L}^\ast\big]-\mu a_t^2+2\delta(t)a_t+\nu^2(t),
\]
using $\mu$-strong convexity for the first term and Cauchy--Schwarz together with
$\E\norm{y_t-y^\ast}\le a_t$ for the second. This is the continuous-time form of
\eqref{eq:master-app}. Absorbing the bias into the contraction by the same Young inequality
$2\delta a\le\frac\mu2a^2+\frac{2\delta^2}\mu$ used in Lemma~\ref{lem:floor},
\begin{equation}\label{eq:ct-master}
\frac{d}{dt}a_t^2\ \le\ -2\,\E\big[\mathscr{L}(y_t)-\mathscr{L}^\ast\big]-\frac\mu2a_t^2+g(t),
\qquad
g(t):=\frac{2\delta(t)^2}{\mu}+\nu^2(t) .
\end{equation}
Only two quantities remain: the horizon and the injection rate $g$.

\paragraph{Step 2: weighting and integrating.} The contraction rate left by
\eqref{eq:ct-master} is $\frac\mu2$, so weight by $e^{\mu t/2}$, which makes
$\frac{d}{dt}\big(e^{\mu t/2}a_t^2\big)\le e^{\mu t/2}\big(-2\E[\mathscr{L}(y_t)-\mathscr{L}^\ast]+g(t)\big)$.
Integrating over $[0,T_c]$, discarding $e^{\mu T_c/2}a_{T_c}^2\ge0$ and dividing by
$W:=\int_0^{T_c}e^{\mu t/2}dt=\frac2\mu\big(e^{\mu T_c/2}-1\big)$, Jensen's inequality
applied to $\hat y=W^{-1}\int_0^{T_c}e^{\mu t/2}y_t\,dt$ gives
\begin{equation}\label{eq:ct-two-terms}
2\,\E\big[\mathscr{L}(\hat y)-\mathscr{L}^\ast\big]\ \le\
\underbrace{\frac{d_0^2}{W}}_{\text{horizon}}
+\underbrace{\frac1W\int_0^{T_c}e^{\mu t/2}g(t)\,dt}_{\text{injection}} .
\end{equation}

\paragraph{Step 3: the two terms.} With the schedule of Proposition~\ref{prop:schedule},
$g(t)=\frac\varepsilon2e^{\mu(T_c-t)/4}$, so that $e^{\mu t/2}g(t)$ still grows, at the
reduced rate $\frac\mu4$. Using $W\ge\frac1\mu e^{\mu T_c/2}$ once $e^{\mu T_c/2}\ge2$,
\[
\frac{d_0^2}{W}\ \le\ \mu d_0^2e^{-\mu T_c/2}\ \le\ \varepsilon,
\qquad
\frac1W\int_0^{T_c}e^{\mu t/2}g(t)\,dt
\ \le\ \frac\varepsilon2\cdot\frac{\mu e^{-\mu T_c/2}\cdot\frac4\mu e^{\mu T_c/2}}{1}
\ =\ 2\cdot\frac\varepsilon2\ =\ \varepsilon,
\]
the first by the assumption on $T_c$ and the second because
$\int_0^{T_c}e^{\mu t/2}e^{\mu(T_c-t)/4}dt=e^{\mu T_c/4}\int_0^{T_c}e^{\mu t/4}dt
\le\frac4\mu e^{\mu T_c/2}$. By \eqref{eq:ct-two-terms},
$\E[\mathscr{L}(\hat y)-\mathscr{L}^\ast]\le\varepsilon$.

\paragraph{Step 4: the compute.} Substituting the schedule,
\[
\delta(t)^{2-\gamma}\nu(t)^{-2}
=\Big(\frac{\mu\varepsilon}8\Big)^{\frac{2-\gamma}2}e^{\frac{\mu(T_c-t)(2-\gamma)}8}
\cdot\frac4\varepsilon\,e^{-\frac{\mu(T_c-t)}4}
=\Big(\frac{\mu\varepsilon}8\Big)^{\frac{2-\gamma}2}\frac4\varepsilon\,
e^{-\frac{\mu\gamma(T_c-t)}8},
\]
the exponent being negative precisely because $\frac{2-\gamma}8-\frac28=-\frac\gamma8$.
Since $\int_0^{T_c}e^{-\mu\gamma(T_c-t)/8}dt\le\frac8{\mu\gamma}$,
\[
\E[\Cost]=\int_0^{T_c}C_\gamma c^\gamma\delta(t)^{2-\gamma}\nu(t)^{-2}\,dt
\ \le\ \frac{32}{\gamma}\,8^{\frac{\gamma-2}2}\,C_\gamma\,c^\gamma\,
(\mu\varepsilon)^{\frac{2-\gamma}2-1}
\ =\ \Lambda''_\gamma\Big(\frac{c}{\sqrt{\mu\varepsilon}}\Big)^{\gamma},
\]
independently of $T_c$, with $\Lambda''_\gamma=\frac{32}\gamma8^{\frac{\gamma-2}2}C_\gamma$.
\qed

\begin{remark}[The schedule is not unique]
Write $g(t)=g_\infty e^{\theta\mu(T_c-t)/2}$ for a decay parameter $\theta\ge0$; the
proposition takes $\theta=\frac12$. Step~3 then yields
$\frac1W\int e^{\mu t/2}g\le\frac{g_\infty}{1-\theta}$, which is finite exactly for
$\theta<1$, while the exponent of Step~4 is $\frac{\theta(2-\gamma)}2-\theta=-\frac{\theta\gamma}2$,
negative for every $\theta>0$. Any $\theta\in(0,1)$ therefore works, and the two endpoints
are the two ways of reintroducing a logarithm: at $\theta=0$ the accuracy is constant and
the compute integral becomes $T_c$, which is the logarithm of
Theorem~\ref{thm:main}(b); at $\theta=1$ the injection integral becomes $T_c$ and the error
bound degrades instead.
\end{remark}

\section{The exponent is optimal}
\label{app:lower}

Appendix~\ref{app:lower}.1 and~\ref{app:lower}.2 prove Theorem~\ref{thm:lower} by
exhibiting a pair of instances on which every algorithm pays.
Appendix~\ref{app:lower}.3 proves the statement of Remark~\ref{rem:legendre}, which
exhibits nothing: under strong convexity the exponent is forced by the upper bound
itself, through the stability of the class under Legendre conjugation.

\subsection{The model}

\begin{definition}[Cost-weighted hierarchy oracle]\label{def:model}
An \emph{instance} is a pair $(\mathscr{L},\mathcal{A})$ with $\mathscr{L}$ convex
and $\beta$-smooth and $\mathcal{A}=(A_k)_k$ a family satisfying
Assumption~\ref{ass:dyadic} for $\mathscr{L}$, with the \emph{rate} of the instance being
the $\gamma$ of that assumption. An algorithm interacts with an instance in rounds.
At round $t$ it selects, as a measurable function of the answers received so far
and of its own internal randomness, a point $y_t$ and a level $k_t$. It receives
$A_{k_t}(y_t)$ and pays $c^\gamma2^{\gamma k_t}$. After a possibly random number of
rounds it outputs $\hat y$, and its \emph{cost} is $\sum_tc^\gamma2^{\gamma k_t}$.
\end{definition}

Computation between queries is free, since only the compute spent inside the oracle
is counted on either side. The estimator of Proposition~\ref{prop:oracle} is an
algorithm in this model: drawing the increment $A_k-A_{k-1}$ is two queries, at
levels $k$ and $k-1$.

\subsection{Two instances that no coarse query separates}

Let $(\mathscr{L}_0,\mathcal{A}^0)$ be any instance of rate $\gamma$ and prefactor $c$ on
$\R^d$, with minimiser $z^\ast$, and fix $\lambda>0$ and $\delta>0$. Work on $\R\times\R^d$, one dimension more than the base, writing $y=(v,z)$, and set

\begin{equation}\label{eq:lb-family}
\mathscr{L}_\pm(v,z)=\mathscr{L}_0(z)+\tfrac\lambda2v^2\pm\delta v,
\qquad
\nabla\mathscr{L}_\pm(v,z)=\big(\lambda v\pm\delta,\ \nabla\mathscr{L}_0(z)\big),
\end{equation}
both started at $y_0=(0,z^\ast)$. Each is convex and $\max(\beta,\lambda)$-smooth\footnote{Recall that $\mathscr{L}_0$ is taken $\beta$-smooth according to Theorem~\ref{thm:main}}, and
$\mu$-strongly convex whenever $\mathscr{L}_0$ is and $\lambda\ge\mu$. Their minimisers are
$y^\ast_\pm=(\mp\frac\delta\lambda,z^\ast)$, so $\norm{y_0-y^\ast_\pm}=\delta/\lambda$ for
both, and their gradients differ by $(2\delta,0)$ at every point. Flattening one direction
is what converts a gradient perturbation into a loss: the same $\delta$ on a direction
already curved at $\beta$ would buy $\delta^2/\beta$ instead of $\delta^2/\lambda$.

\begin{lemma}[No common approximate minimizer]\label{lem:sep}
For every $(v,z)$,
\[
\big(\mathscr{L}_+-\mathscr{L}_+^\ast\big)+\big(\mathscr{L}_--\mathscr{L}_-^\ast\big)
=2\big(\mathscr{L}_0(z)-\mathscr{L}_0^\ast\big)+\lambda v^2+\frac{\delta^2}\lambda
\ \ge\ \frac{\delta^2}\lambda .
\]
\end{lemma}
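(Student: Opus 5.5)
The identity in Lemma~\ref{lem:sep} is a direct computation, and I would carry it out by computing each optimal value separately and then adding. The two losses share the term $\mathscr{L}_0(z)$ and are separable in $(v,z)$. Each minimum is therefore attained at $z=z^\ast$ for the $z$-part and at $v=\mp\delta/\lambda$ for the $v$-part. At $v=\mp\delta/\lambda$ the quadratic $\frac\lambda2v^2\pm\delta v$ takes the value $\frac{\delta^2}{2\lambda}-\frac{\delta^2}{\lambda}=-\frac{\delta^2}{2\lambda}$, so
\[
\mathscr{L}_\pm^\ast=\mathscr{L}_0^\ast-\frac{\delta^2}{2\lambda}.
\]

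Next I would write each optimality gap at an arbitrary point $(v,z)$ by completing the square in $v$:
\[
\mathscr{L}_\pm(v,z)-\mathscr{L}_\pm^\ast=\big(\mathscr{L}_0(z)-\mathscr{L}_0^\ast\big)+\frac\lambda2\Big(v\pm\frac\delta\lambda\Big)^2 .
\]
Adding the two gaps, the cross terms $\pm\delta v$ cancel. The squares then sum to $\frac\lambda2\big(2v^2+2\delta^2/\lambda^2\big)=\lambda v^2+\delta^2/\lambda$, which is the claimed identity.

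The inequality follows because $\mathscr{L}_0(z)\ge\mathscr{L}_0^\ast$ and $\lambda v^2\ge0$. There is no real obstacle here. The only point needing care is the sign bookkeeping in the completed square, and checking that the minimiser of $\mathscr{L}_0$ exists, which holds since the instance is assumed to have minimiser $z^\ast$.
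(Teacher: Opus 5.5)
Your proposal is correct and follows the paper's proof essentially line for line. Both complete the square in $v$ to write each gap as $(\mathscr{L}_0(z)-\mathscr{L}_0^\ast)+\frac\lambda2(v\pm\frac\delta\lambda)^2$, sum the two squares, and discard the nonnegative base term. The only difference is that you also derive $\mathscr{L}_\pm^\ast=\mathscr{L}_0^\ast-\frac{\delta^2}{2\lambda}$ explicitly, which the paper leaves implicit.
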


\begin{proof}
$\mathscr{L}_\pm(v,z)-\mathscr{L}_\pm^\ast
=(\mathscr{L}_0(z)-\mathscr{L}_0^\ast)+\frac\lambda2(v\pm\frac\delta\lambda)^2$, and
$(v+\frac\delta\lambda)^2+(v-\frac\delta\lambda)^2=2v^2+2\frac{\delta^2}{\lambda^2}$. The
base contributes a nonnegative term and is discarded.
\end{proof}

\begin{lemma}[Every level coarser than $\delta$ is blind]\label{lem:blind}
Let $k_\ast$ be the least integer with $2^{-k_\ast}<2\delta$. There are hierarchies
$\mathcal{A}^\pm$ for $\mathscr{L}_\pm$, both satisfying Assumption~\ref{ass:dyadic} with
rate $\gamma$ and prefactor $2c$, such that $A_k^+=A_k^-$ for every $k<k_\ast$.
\end{lemma}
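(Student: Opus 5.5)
We build the two hierarchies explicitly by starting from the base hierarchy $\mathcal{A}^0$ and handling the extra coordinate $v$ separately. For the $v$-component the exact gradients are $\lambda v\pm\delta$, and they differ only by the constant $\pm\delta$. At every level $k<k_\ast$ we have $2^{-k}\ge2\delta$, which is more than $\delta$. So a single common approximation works at those levels: the tilt-free function $\lambda v$ is within $\delta\le 2^{-k}$ of both $\lambda v+\delta$ and $\lambda v-\delta$. We therefore set
\[
A_k^\pm(v,z)=\big(\lambda v,\ A^0_k(z)\big)\quad (k<k_\ast),
\qquad
A_k^\pm(v,z)=\big(\lambda v\pm\delta,\ A^0_k(z)\big)\quad (k\ge k_\ast).
\]
These coincide for $k<k_\ast$ by construction. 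The fine levels carry the exact tilt.

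Next we check the accuracy half of Assumption~\ref{ass:dyadic}. We measure the error of the product approximation in the Euclidean norm on $\R\times\R^d$, coordinatewise. For $k<k_\ast$ the error is at most $\sqrt{\delta^2+4^{-k}}\le\sqrt2\,2^{-k}$. For $k\ge k_\ast$ the $v$-part is exact, so the error is at most $2^{-k}$. The factor $\sqrt2$ is absorbed by a shift of one level: we define the new family as $\tilde A_k:=A_{k+1}$ built as above, with the cut moved to match. Then $\|\tilde A_k-\nabla\mathscr{L}_\pm\|_\infty\le\sqrt2\,2^{-k-1}<2^{-k}$, and the cost becomes $c^\gamma2^{\gamma(k+1)}=(2^{1}c)^\gamma2^{\gamma k}$. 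This matches the prefactor $2c$ in the statement. After the shift the coarse levels must still agree, and we check this using $2^{-(k+1)}\ge\delta$ for $k+1<k_\ast$, i.e.\ we use $k_\ast$'s definition with $2\delta$ precisely to leave this slack.

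Then we check the cost half. Evaluating the $v$-component is one multiplication and possibly one addition, an $O(1)$ circuit overhead, while the $z$-component costs what $A^0_{k+1}$ costs. The one delicate point is absorbing that additive constant into the prefactor. Because $k\ge k_{\min}$ with $c^\gamma2^{\gamma k_{\min}}\gtrsim1$, as in the convention of Appendix~\ref{app:oracle}, any $O(1)$ overhead is dominated by $c^\gamma2^{\gamma k}$. If needed, this absorption uses the remaining slack in passing from $c$ to $2c$.

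I expect the main obstacle to be this bookkeeping of constants: making the $\sqrt2$ from combining the two coordinates and the additive $v$-cost both fit inside the single factor $2$ on $c$, while keeping the agreement threshold exactly at $k_\ast$. The fallback is to combine the coordinate errors with a max norm rather than the Euclidean norm. That removes the $\sqrt2$ entirely, so no level shift is needed and the whole factor $2$ is left for the overhead.
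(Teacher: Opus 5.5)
Once the cut is placed so that the shared map $(\lambda v,\,A^0_{k+1}(z))$ is used at every level $k<k_\ast$, your construction is the paper's proof. The one-level shift puts the $z$-error at $2^{-k-1}$, and the slack in $2^{-k}\ge2\delta$ gives $\delta\le2^{-k-1}$ for every $k<k_\ast$; this, not $k+1<k_\ast$, is the condition to check. The cost then becomes $c^\gamma2^{\gamma(k+1)}=(2c)^\gamma2^{\gamma k}$, with the scalar operation absorbed just as informally as in the paper, since the shift already uses the whole factor $2$ and leaves no ``remaining slack''. Drop the max-norm fallback: Assumption~\ref{ass:dyadic} is read in the Euclidean norm the upper bound relies on (the bias enters through Cauchy--Schwarz), so a hierarchy that is $2^{-k}$-accurate only blockwise is not a valid instance. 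Your main argument never needs it.
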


\begin{proof}
Put $A^\pm_k(v,z)=(\lambda v,\,A^0_{k+1}(z))$ for $k<k_\ast$ and
$A^\pm_k(v,z)=(\lambda v\pm\delta,\,A^0_k(z))$ for $k\ge k_\ast$. The first is the same map
for both instances. For $k<k_\ast$ we have $2^{-k}\ge2\delta$, hence
$\delta\le2^{-k-1}$ and
\[
\norm{A^\pm_k-\nabla\mathscr{L}_\pm}^2=\delta^2+\norm{A^0_{k+1}-\nabla\mathscr{L}_0}^2
\le2^{-2k-2}+2^{-2k-2}\le2^{-2k},
\]
at cost $\Cost(A^0_{k+1})\le c^\gamma2^{\gamma(k+1)}=(2c)^\gamma2^{\gamma k}$, the extra
scalar operation being absorbed into the prefactor. For $k\ge k_\ast$ the error is that of
$A^0_k$, at most $2^{-k}$, at cost at most $(2c)^\gamma2^{\gamma k}$.
\end{proof}

The two instances are therefore not merely close below level $k_\ast$: they return the
identical vector at every point and every such level, so no algorithm separates them there,
whatever it does with the answers. Only the adjoined coordinate is left to optimise: the base is started at its own minimiser
and plays no part in the separation. Its role is not to be hard but to make the price
legitimate. On a bare quadratic the gradient is computable exactly at bounded cost, so no
hierarchy of rate $\gamma$ exists for it and Definition~\ref{def:model} would be charging
for something free. The model sells levels rather than components, here as in
Theorem~\ref{thm:main}, whose oracle also buys each $A_k$ whole.
 In particular
$\gamma_{\min}(\nabla\mathscr{L}_\pm)=\gamma_{\min}(\nabla\mathscr{L}_0)$, since the two
gradients determine one another at equal accuracy, so whenever the base is intrinsically of
rate $\gamma$ the hard pair is too, and Remark~\ref{rem:gammamin} does not apply to it.

\begin{proof}[Proof of Theorem~\ref{thm:lower}]
Take \eqref{eq:lb-family} with $\lambda=8\varepsilon/d_0^2$ and $\delta=\lambda d_0$ in the
convex case, and with $\lambda=\mu$ and $\delta=\sqrt{8\mu\varepsilon}$ in the strongly
convex case. Either way $\norm{y_0-y^\ast}=\delta/\lambda\le d_0$, the smoothness constant
is $\max(\beta,\lambda)=\beta$ under the stated bound on $\varepsilon$, and
\begin{equation}\label{eq:lb-budget}
\delta^2/\lambda=8\varepsilon .
\end{equation}

Run the algorithm on both instances with the same internal randomness and the hierarchies
of Lemma~\ref{lem:blind}. Let $\tau=\inf\{t:k_t\ge k_\ast\}$ and $E=\{\tau=\infty\}$.

\emph{The two runs agree up to and including round $\tau$.} By induction the round-$0$
choice depends only on the internal randomness and if the runs agree through round $t-1$
with every level queried there below $k_\ast$, then by Lemma~\ref{lem:blind} every answer
received was the same in both, so the round-$t$ choice, a function of those answers and of
the shared randomness, is the same as well. On $E$ the two runs therefore coincide for all
time and return the same $\hat y$, while on $E^c$ they select the same round $\tau$ and the
same level $k_\tau\ge k_\ast$, so \emph{both} pay for the fine query.

\emph{Bounding $\Pr(E)$.} The algorithm succeeds on both instances, so
$\E[\mathscr{L}_\pm(\hat y_\pm)-\mathscr{L}_\pm^\ast]\le\varepsilon$. The two gaps are
nonnegative, so restricting the expectation to $E$ only lowers it, and the two outputs are
equal on $E$, so Lemma~\ref{lem:sep} with \eqref{eq:lb-budget} bounds their sum below by
$8\varepsilon$ there. Hence
\[
2\varepsilon\ \ge\
\E\big[\mathscr{L}_+(\hat y_+)-\mathscr{L}_+^\ast\big]
+\E\big[\mathscr{L}_-(\hat y_-)-\mathscr{L}_-^\ast\big]
\ \ge\ 8\varepsilon\,\Pr(E),
\]
so $\Pr(E)\le\frac14$ and $\Pr(E^c)\ge\frac34$. The algorithm is not required to succeed on
every realization, only on average, so a positive $\Pr(E)$ is no contradiction; what the
display forbids is that $E$ be likely.

\emph{Charging for the fine query.} On $E^c$ both runs pay at least
$(2c)^\gamma2^{\gamma k_\ast}$, and $2^{k_\ast}>1/(2\delta)$ by minimality, so on either
instance
\[
\E[\Cost]\ \ge\ \tfrac34\,(2c)^\gamma(2\delta)^{-\gamma}\ =\ \tfrac34\,(c/\delta)^\gamma :
\]
the coarser threshold and the doubled prefactor cancel. Substituting the two calibrations
of $\delta$ gives the two displays of Theorem~\ref{thm:lower}.
\end{proof}

\subsection{Duality forces the exponent, without any construction}

Let $\mathscr{L}^\ast(z)=\sup_y\{\ip zy-\mathscr{L}(y)\}$. If $\mathscr{L}$ is
$\mu$-strongly convex and $\beta$-smooth then $\mathscr{L}^\ast$ is
$1/\beta$-strongly convex and $1/\mu$-smooth, and
$\mathscr{L}^{\ast\ast}=\mathscr{L}$: the class is stable and conjugation is an
involution of it. Everything rests on one identity,
\begin{equation}\label{eq:fenchel}
\nabla\mathscr{L}^\ast(z)\ =\ \arg\min_y\big\{\mathscr{L}(y)-\ip zy\big\},
\end{equation}
which says that \emph{evaluating the gradient of the conjugate is solving a tilted
copy of the original problem}. A solver for $\mathscr{L}$ is therefore an
approximation scheme for $\nabla\mathscr{L}^\ast$, and its accuracy--cost profile
becomes that scheme's scaling law.

\begin{lemma}[Tilts are free]\label{lem:tilt}
If $(\mathscr{L},\mathcal{A})$ is an instance of rate $\gamma$ and prefactor $c$,
then for every $z$ the tilted objective
$\mathscr{L}_z:=\mathscr{L}-\ip{z}{\cdot}$ is an instance with the same $\mu$,
$\beta$, $\gamma$ and $c$, through $A_k-z$.
\end{lemma}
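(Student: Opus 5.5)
The argument is a direct verification of the three properties of an instance in Definition~\ref{def:model}: the tilted objective $\mathscr{L}_z$ must keep the curvature constants, the family $(A_k-z)_k$ must approximate its gradient at the same dyadic accuracies, and each $A_k-z$ must cost no more than $A_k$.

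\emph{Curvature.} Since $\mathscr{L}_z=\mathscr{L}-\ip z\cdot$ differs from $\mathscr{L}$ by a linear function, its Hessian, or more generally its Bregman divergence $D_{\mathscr{L}_z}(y',y)=D_{\mathscr{L}}(y',y)$, is unchanged. So $\mathscr{L}_z$ is convex, $\beta$-smooth, and $\mu$-strongly convex whenever $\mathscr{L}$ is. In the strongly convex case strong convexity also gives it a unique minimizer, namely $\nabla\mathscr{L}^\ast(z)$ by \eqref{eq:fenchel}. In the merely convex case one only asks for a minimizer where it exists; for $z$ in the range of $\nabla\mathscr{L}$ this is automatic.

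\emph{Accuracy.} We have $\nabla\mathscr{L}_z=\nabla\mathscr{L}-z$. Hence $(A_k-z)-\nabla\mathscr{L}_z=A_k-\nabla\mathscr{L}$ pointwise, and $\norm{(A_k-z)-\nabla\mathscr{L}_z}_\infty\le2^{-k}$ at every level. The constant $z$ cancels exactly, so no accuracy is lost at any $k$.

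\emph{Cost.} This is the only step that is not an identity. Subtracting a fixed vector adds $d$ elementary operations, or one vector subtraction, to the circuit of $A_k$. The plan is to absorb this additive overhead into the prefactor, in the same way that Lemma~\ref{lem:blind} absorbs ``the extra scalar operation.'' There are two ways to do it without changing $c$.

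The first treats the subtraction as free, in keeping with the convention of Definition~\ref{def:model} that computation between queries is free. An algorithm for $\mathscr{L}_z$ then queries $A_k(y)$ and subtracts $z$ itself, so the oracle charge $c^\gamma2^{\gamma k}$ is unchanged.

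The second folds $z$ into the circuit's output constants, and then the node count of the circuit does not grow at all.

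I would adopt the first reading, since it matches how the lower-bound model counts compute. I would also remark that under a strict circuit-size count the prefactor changes at most by a factor $(1+d/c^\gamma)^{1/\gamma}$, which is harmless for the exponents used in the sequel. This cost accounting is the one place where care is needed. The remaining steps are identities.
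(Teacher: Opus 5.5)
Your proposal is correct and follows the paper's own argument. The paper likewise notes that $\nabla\mathscr{L}_z=\nabla\mathscr{L}-z$, so the error of $A_k-z$ equals that of $A_k$, that a linear term changes neither $\mu$ nor $\beta$, and that subtracting a fixed vector costs nothing; this last point is your first reading of the cost, and your remarks on the existence of a minimizer and on a strict circuit count are extras the paper does not need, since the lemma is only invoked for strongly convex instances in Lemma~\ref{lem:transfer}.
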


\begin{proof}
$\nabla\mathscr{L}_z=\nabla\mathscr{L}-z$, so
$\norm{(A_k-z)-\nabla\mathscr{L}_z}=\norm{A_k-\nabla\mathscr{L}}\le2^{-k}$, and
subtracting a fixed vector costs nothing. A linear term changes neither $\mu$ nor
$\beta$.
\end{proof}

\begin{lemma}[Transfer]\label{lem:transfer}
Let $\mathscr{L}$ be $\mu$-strongly convex with a hierarchy of rate $\gamma$ and
prefactor $c$, and suppose an algorithm solves every $\mu$-strongly convex
instance to loss $\varepsilon$ at cost $O(\varepsilon^{-t})$, the implied constant
depending only on $\gamma$, $c$, $\mu$ and $\beta$. Then
$\nabla\mathscr{L}^\ast$ admits a hierarchy of rate $2t$.
\end{lemma}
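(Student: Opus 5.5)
The plan is to build the level-$k$ approximation of $\nabla\mathscr{L}^\ast(z)$ by running the assumed solver on a tilted objective and then turning the loss guarantee into a distance guarantee. By \eqref{eq:fenchel}, $\nabla\mathscr{L}^\ast(z)$ is the minimizer $y_z$ of $\mathscr{L}_z=\mathscr{L}-\ip z\cdot$. By Lemma~\ref{lem:tilt}, $\mathscr{L}_z$ is an instance with the same $\mu,\beta,\gamma,c$, accessed through $A_k-z$. So the hypothesized algorithm applies to it uniformly in $z$, with the same implied constant.

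Given a target accuracy $2^{-j}$ for $\nabla\mathscr{L}^\ast$, I would:
\begin{itemize}
\item set $\varepsilon_j=\mu 2^{-2j}/2$;
\item run the solver on $\mathscr{L}_z$ to loss $\varepsilon_j$;
\item let the output $\hat y(z)$ be the level-$j$ approximation $B_j(z)$.
\end{itemize}
Strong convexity gives $\frac\mu2\norm{\hat y-y_z}^2\le\mathscr{L}_z(\hat y)-\mathscr{L}_z^\ast$, so a loss $\le\varepsilon_j$ forces $\norm{\hat y-y_z}\le 2^{-j}$. The cost is $O(\varepsilon_j^{-t})=O\big((2/\mu)^t 2^{2tj}\big)$, which is of the form $c'^{2t}2^{2tj}$ with $c'$ depending on $\gamma,c,\mu,\beta$. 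This is Assumption~\ref{ass:dyadic} at rate $2t$.

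\textbf{Main obstacle.} The solver only guarantees the loss \emph{in expectation}, and it may be randomized, whereas Assumption~\ref{ass:dyadic} asks for a deterministic map with a sup-norm bound. I would address this in three steps.
\begin{enumerate}
\item Pass from expected loss to a high-probability guarantee by Markov: with $\varepsilon'=\varepsilon_j/4$, each run fails with probability at most $1/4$.
\item Boost the success probability by running $r$ independent copies and taking a geometric median of the outputs. Median-of-means style concentration makes the failure probability exponentially small in $r$, at the price of a constant factor in distance.
\item Derandomize by fixing the seed. This works if the randomness can be chosen uniformly good over $z$, for instance through a finite net of $z$ in a bounded domain and the $1/\mu$-Lipschitz property of $\nabla\mathscr{L}^\ast$.
\end{enumerate}
If that is too delicate, the fallback is to read ``hierarchy'' for $\nabla\mathscr{L}^\ast$ in the randomized-circuit sense, so that the expectation bound suffices directly. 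The $\log$ factors from the net and from the $r$ repeats would then be absorbed only up to rate $2t+o(1)$, so I expect this step to determine whether the exact rate $2t$ survives.

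The expected cost must also become a worst-case cost. For this I would truncate each run at a constant multiple of its expected cost, again by Markov, and count a truncated run as a failure, absorbing it into the $1/4$ failure budget.

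Finally, I would check that the queries the solver makes to $A_k-z$ are legitimate circuit calls. They are: subtracting $z$ adds $O(d)$ nodes, which is absorbed into the prefactor. The composed object is then a hierarchy for $\nabla\mathscr{L}^\ast$ with prefactor depending only on $\gamma,c,\mu,\beta$. This gives the lemma, and, via $\mathscr{L}^{\ast\ast}=\mathscr{L}$, the contradiction used in Remark~\ref{rem:legendre}.
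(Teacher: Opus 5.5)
Your proposal is correct and follows the paper's proof. You tilt by Lemma~\ref{lem:tilt} so that the solver applies uniformly in $z$, identify the tilted minimizer with $\nabla\mathscr{L}^\ast(z)$ through \eqref{eq:fenchel}, turn loss $\varepsilon$ into distance $\sqrt{2\varepsilon/\mu}$ by strong convexity, and read the cost $O(\varepsilon^{-t})$ as $O(2^{2tk})$. The paper's proof skips the expectation-versus-sure-guarantee issue you work through, and only afterwards concedes it as a relaxation that costs logarithmically many repetitions and a union bound. Your Markov, boosting and derandomization treatment is therefore more careful than the paper's, and your worry that it only gives rate $2t+o(1)$ matches that concession; it is harmless downstream because $\gamma_{\min}$ is an infimum.
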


\begin{proof}
For each $z$, the tilt $\mathscr{L}_z$ is by Lemma~\ref{lem:tilt} an instance with
the same $\gamma$, $c$, $\mu$ and $\beta$, so the algorithm applies to it at a cost
independent of $z$. Strong convexity gives
$\mathscr{L}_z(y)-\mathscr{L}_z^\ast\ge\frac\mu2\norm{y-y_z^\ast}^2$, so a loss
$\varepsilon$ certifies a distance $\Delta=\sqrt{2\varepsilon/\mu}$ and the cost
$O(\varepsilon^{-t})$ reads $O(\Delta^{-2t})$: this is where the factor two comes
from. Let $A'_k(z)$ be the algorithm's output on $\mathscr{L}_z$ run to distance
$2^{-k}$. By \eqref{eq:fenchel} the minimizer $y_z^\ast$ is
$\nabla\mathscr{L}^\ast(z)$, so
$\norm{A'_k(z)-\nabla\mathscr{L}^\ast(z)}\le2^{-k}$ for every $z$, at cost
$O(2^{2tk})$ which is the announced hierarchy of rate $2t$.
\end{proof}

Let
\[
\gamma_{\min}(f)\ =\ \inf\big\{\gamma>0:\ \norm{f}_{M^\gamma}<\infty\big\}
\]
be the critical exponent of the HTMC norm~\eqref{eq:htmc}, the best rate any hierarchy
for $f$ can have. It is a property of the function, where the $\gamma$ of
Assumption~\ref{ass:dyadic} is a property of a supplied family and exhibiting a
hierarchy bounds $\gamma_{\min}$ from above and never from below.

\begin{proposition}[No speedup at the intrinsic rate]\label{prop:legendre}
Let $\mathscr{L}$ be $\mu$-strongly convex and $\beta$-smooth, and let
$g=\gamma_{\min}(\nabla\mathscr{L})$. Suppose an algorithm solves every
$\mu$-strongly convex instance to loss $\varepsilon$ at cost $O(\varepsilon^{-t})$,
the implied constant depending only on the parameters of the instance. Then
$t\ge g/2$.
\end{proposition}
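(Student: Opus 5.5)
Argue by contradiction and conjugate twice, combining Lemma~\ref{lem:transfer} with the involution $\mathscr{L}^{\ast\ast}=\mathscr{L}$. Suppose $t<g/2$. Fix any hierarchy for $\nabla\mathscr{L}$ of some rate $\gamma'$; one exists with $\gamma'$ slightly above $g$ by definition of $\gamma_{\min}$. Lemma~\ref{lem:transfer}, applied to $\mathscr{L}$ with this hierarchy, then produces a hierarchy of rate $2t$ for $\nabla\mathscr{L}^\ast$. Here $\mathscr{L}^\ast$ is $(1/\beta)$-strongly convex and $(1/\mu)$-smooth, so $(\mathscr{L}^\ast,\mathcal{A}')$ is again an instance in the sense of Definition~\ref{def:model}, with strong convexity parameter $\mu'=1/\beta$.

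Apply the solver a second time, now to $\mathscr{L}^\ast$ and its tilts; by Lemma~\ref{lem:tilt} these carry the same hierarchy shifted by $z$. The hypothesis must be read as holding for every strongly convex instance, with the constant depending only on that instance's parameters. I will state explicitly that the quantifier ``every $\mu$-strongly convex instance'' ranges over the class parametrised by $(\mu,\beta,\gamma,c)$ and is then used at $(1/\beta,1/\mu,2t,c')$. Under this reading, Lemma~\ref{lem:transfer} yields a hierarchy for $\nabla\mathscr{L}^{\ast\ast}=\nabla\mathscr{L}$ of rate $2t$, whence $\gamma_{\min}(\nabla\mathscr{L})\le 2t<g$, a contradiction.

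The rate computation needs more care than this, because Lemma~\ref{lem:transfer} outputs rate $2t$ whatever the input rate was. One must therefore make sure the $O(\varepsilon^{-t})$ exponent does not itself depend on the input rate. If $t$ were allowed to depend on $\gamma$, the second pass would give rate $2t(2t)$ rather than $2t$. I will take the hypothesis as a single exponent $t$, uniform over the instances considered, which is how the statement is phrased. The second application then gives rate $2t$ directly, and the bound $\gamma_{\min}\le 2t$ follows as long as the resulting family has a finite $M^{2t}$ norm. That in turn requires the implied constant to be uniform in $z$, which Lemma~\ref{lem:tilt} supplies. It also requires the hierarchy to be realisable by circuits on the bounded hyper-rectangle of Remark~\ref{rem:norms}, which I will assume, as in the definition of the norm.

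The main obstacle is the circularity in the exponent rather than any estimate. One must check that the first transfer is not even needed: one can apply the hypothesis directly to $\mathscr{L}^\ast$ provided $\mathscr{L}^\ast$ is known to be an instance of some finite rate. To avoid this, I will run the argument in the cleaner order. First transfer from $\mathscr{L}$ to show that $\mathscr{L}^\ast$ is an instance at all, with rate $2t$. Then transfer back to obtain rate $2t$ for $\nabla\mathscr{L}$. I will also remark that a loss guarantee in expectation certifies only $\E\norm{\hat y-y_z^\ast}^2\le 2\varepsilon/\mu$, whereas the hierarchy needs a deterministic $L^\infty$ bound. I would handle this by fixing the algorithm's random seed to a realisation achieving the bound pointwise, which is possible via Markov's inequality at the cost of a constant factor in $\varepsilon$ and therefore without changing the rate.
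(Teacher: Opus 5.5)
Your skeleton is the paper's: transfer from $\mathscr{L}$ to $\mathscr{L}^\ast$ with Lemma~\ref{lem:transfer}, then back through $\mathscr{L}^{\ast\ast}=\mathscr{L}$. The return trip, however, has a genuine gap. You run the \emph{hypothetical} solver a second time, on $\mathscr{L}^\ast$ and its tilts, and the hypothesis does not license this. It speaks of $\mu$-strongly convex instances like $\mathscr{L}$. By contrast, $\mathscr{L}^\ast$ is only $1/\beta$-strongly convex and $1/\mu$-smooth, and it carries a hierarchy of the new rate $2t$. Nothing says the solver works there, let alone at the same exponent $t$. You notice this yourself (your ``$2t(2t)$'' worry) and settle it by decree, declaring the exponent uniform over both classes. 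That is an extra assumption, so as written you prove the proposition under a stronger hypothesis than the one stated.

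The missing idea is that the second pass does not need the hypothetical solver at all. The paper instead runs Theorem~\ref{thm:main}(b), which is proved for every $(\mu,\beta,\gamma,c)$, on the rate-$2t$ hierarchy of $\nabla\mathscr{L}^\ast$. Its loss exponent is $\frac12(2t)=t$, up to the factor $1+\log(\mu d_0^2/\varepsilon)$. That factor costs only an arbitrarily small increment of rate, which is harmless since $\gamma_{\min}$ is an infimum; its $z$-dependence through $d_0$ is removed by warm starts (Remark~\ref{rem:transfer-cond}). Lemma~\ref{lem:tilt} makes this uniform over the tilts of $\mathscr{L}^\ast$, and Lemma~\ref{lem:transfer} applied to $\mathscr{L}^\ast$ then returns a hierarchy of rate $2t$ for $\nabla\mathscr{L}$. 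Because ``Theorem~\ref{thm:main}(b), then transfer'' maps every rate to itself (Remark~\ref{rem:invariance}), the hypothesis is used only once, on the original class. The circularity you single out as the main obstacle therefore never arises.

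Separately, your derandomization does not work as stated. Markov's inequality gives, for each fixed $z$, a constant probability of an accurate output at bounded cost. It does not give a single seed that is good for every $z$ at once, which the $L^\infty$ bound of Assumption~\ref{ass:dyadic} requires, and a $z$-dependent seed cannot be found without knowing the answer. The paper instead proves the proposition under the relaxation that levels are accurate with high probability. It notes that a sure guarantee costs a logarithmic number of repetitions and a union bound over the horizon, which moves no exponent.
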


\begin{proof}
Lemma~\ref{lem:transfer} turns that algorithm into a hierarchy for
$\nabla\mathscr{L}^\ast$ of rate $2t$. Now $\mathscr{L}^\ast$ is
$1/\beta$-strongly convex and $1/\mu$-smooth, so Theorem~\ref{thm:main}(b) runs on
that hierarchy at loss exponent $\frac12(2t)=t$, and by Lemma~\ref{lem:tilt} it
does so on every tilt of $\mathscr{L}^\ast$ at a uniform cost. A second application
of Lemma~\ref{lem:transfer}, to $\mathscr{L}^\ast$, produces a hierarchy for
$\nabla\mathscr{L}^{\ast\ast}=\nabla\mathscr{L}$ of rate $2t$. Hence
$g=\gamma_{\min}(\nabla\mathscr{L})\le2t$.
\end{proof}

Theorem~\ref{thm:main}(b) attains the exponent $(g+\eta)/2$ for every $\eta>0$, so
$g/2$ is optimal: at the intrinsic rate, optimizing costs exactly what evaluating
the gradient once costs.

Proposition~\ref{prop:legendre} is proved under a mild relaxation of
Assumption~\ref{ass:dyadic}: Lemma~\ref{lem:transfer} builds its levels out of an
algorithm whose guarantee holds in expectation, so they are accurate with high
probability rather than surely. Restoring a sure guarantee costs a logarithmic
number of repetitions and a union bound over the horizon, which changes no
exponent. Theorem~\ref{thm:lower}, by contrast, needs no such relaxation.

\begin{remark}[The intrinsic rate is conjugation-invariant]\label{rem:invariance}
Applying Lemma~\ref{lem:transfer} with Theorem~\ref{thm:main}(b) itself turns a
hierarchy of rate $\gamma$ for $\nabla\mathscr{L}$ into one of rate
$2\cdot\frac\gamma2=\gamma$ for $\nabla\mathscr{L}^\ast$. Hence
$\gamma_{\min}(\nabla\mathscr{L}^\ast)\le\gamma_{\min}(\nabla\mathscr{L})$, and
symmetrically: the critical exponent of ~\eqref{eq:htmc} is the same for a
function and its conjugate.
\end{remark}
\begin{remark}[Two bookkeeping points in the transfer]\label{rem:transfer-cond}
Neither moves an exponent. Theorem~\ref{thm:main}(b) carries a factor
$1+\log(\mu d_0^2/\varepsilon)$ with $d_0$ the distance from the start to the
minimizer, so the constant of Lemma~\ref{lem:transfer} would depend on $z$ through
$\nabla\mathscr{L}^\ast(z)$. Computing $A'_k(z)$ from $A'_{k-1}(z)$ rather than
from scratch fixes $d_0$ at $2^{-(k-1)}$ against a target $2^{-k}$, which makes
that factor a constant, and the cost of $A'_k(z)$ is then a geometric sum over the
refinements, dominated by the last. Only the coarsest level is computed cold, from
a fixed start, at a cost carrying $\log\norm{z}$ and reading the sup-norm of
Assumption~\ref{ass:dyadic} over a bounded region absorbs it into the prefactor.
\end{remark}

\begin{remark}[Scope, and what is open]\label{rem:legendre-scope}
Proposition~\ref{prop:legendre} needs strong convexity twice over, to convert a
loss into a distance and to keep the conjugate inside the class, so only
Theorem~\ref{thm:lower} covers the merely convex case. It also lives at the
intrinsic rate, and exhibiting a hierarchy bounds $\gamma_{\min}$ from above only:
discharging its hypothesis on a concrete family is open, and
Remark~\ref{rem:gammamin} shows that on the instance of Section~\ref{sec:lsq} the
proposition is true and empty. Metric entropy settles the matter at the level of a
function class rather than of a single function, the $\varepsilon$-entropy of a
$C^\alpha$ ball being $\asymp\varepsilon^{-d/\alpha}$, at the cost of machinery we
do not develop here. Theorem~\ref{thm:lower} needs none of this, which is why it is
the statement in the body.
\end{remark}

\subsection{Tightness, and where it stops}

Nothing in the proof of Theorem~\ref{thm:lower} uses $\gamma>2$, so it holds at
every rate; but it is tight only for $\gamma\ge2$. Below the threshold the upper
bound of Corollary~\ref{cor:eta} is
$c^\gamma d_0^2\beta^{1-\gamma/2}\varepsilon^{-(1+\gamma/2)}$, and the ratio to
Theorem~\ref{thm:lower} is $(\beta d_0^2/\varepsilon)^{(2-\gamma)/2}$, equal to $1$
at $\gamma=2$ and diverging as $\gamma$ decreases. This is a statement about where
the cost sits rather than a defect of the construction. Above the threshold the
compute is dominated by the price of a single query at the accuracy the problem
demands, which is what the argument charges for. Below it the compute is dominated
by the \emph{number} of iterations, which a single-query argument cannot see. A
matching lower bound there would have to show that a constant fraction of the
iterations must be run at a fine level.

\section{Acceleration}
\label{app:acc}

We drive the standard accelerated scheme with the same oracle,
\begin{equation}\label{eq:acc-iter}
y_{t+1}=z_t-\eta\,\tilde f_{\delta,\sigma}(z_t),
\qquad
z_{t+1}=y_{t+1}+\theta_t\,(y_{t+1}-y_t),
\end{equation}
where the momentum coefficient $\theta_t$ equals $\frac{t-1}{t+2}$ in the convex case
and the constant $\frac{\sqrt\kappa-1}{\sqrt\kappa+1}$ under strong convexity.

The error of such a scheme under a jointly biased and noisy oracle splits into three
channels, and we take that split from \citet{dvurechensky2016}, whose Theorem~3.4 gives
$\Theta\big(LR^2/T^{p}+\sigma R/\sqrt T+T^{p-1}\delta\big)$ for a family indexed by
$p\in[1,2]$, accelerated at $p=2$. What has not been done is to balance those three
channels against a cost model, which is what Assumption~\ref{ass:dyadic} supplies.

Both propositions rest on one inequality, which we take from the literature, and one
lemma, which we prove because it is the bridge to Assumption~\ref{ass:dyadic}. Everything
else is substitution.

\paragraph{The inequality.} Following \citet{devolder2014}, a pair
$(f_{\delta_0},g_{\delta_0})$ is a $(\delta_0,L)$-oracle for $\mathscr{L}$ on a convex set
$Q$ when
\begin{equation}\label{eq:dgn}
0\ \le\ \mathscr{L}(y)-f_{\delta_0}(x)-\ip{g_{\delta_0}(x)}{y-x}\ \le\
\tfrac L2\norm{y-x}^2+\delta_0
\qquad\text{for all }x,y\in Q ,
\end{equation}
convexity with a slack on the left, smoothness with a slack on the right, the slack
$\delta_0$ being measured in function values. Driven by such an oracle whose calls carry
an independent zero-mean noise of variance $\sigma^2$, the accelerated method run for $T$
steps from a point at distance $R$ from the minimizer satisfies
\begin{equation}\label{eq:dg34}
\E\big[\mathscr{L}(y_T)-\mathscr{L}^\ast\big]\ \lesssim\
\frac{LR^2}{T^{2}}\ +\ \frac{\sigma R}{\sqrt T}\ +\ T\delta_0
\end{equation}
in the Euclidean setup. This is Theorem~3.4 of \citet{dvurechensky2016} at $p=2$, where
the constants are explicit. The three terms are the three channels of
Section~\ref{sec:nesterov}, the last growing with the horizon as \citet{devolder2014}
showed it must. Nothing below uses anything else about the method.

\paragraph{The bridge.} Assumption~\ref{ass:dyadic} bounds a \emph{gradient}, not a
function value, so \eqref{eq:dg34} does not apply to it as it stands.

\begin{lemma}[Conversion]\label{lem:convert}
Let $\mathscr{L}$ be convex and $\beta$-smooth and let $\bar g=\E[\tilde f_{\delta,\sigma}]$
be the mean of the oracle of Proposition~\ref{prop:oracle}, so that
$\norm{\bar g(x)-\nabla\mathscr{L}(x)}\le\delta$ for every $x$. Then
\begin{enumerate}[label=(\alph*),leftmargin=2.2em,itemsep=3pt,topsep=3pt]
\item on a convex set of diameter $D$, the pair $\big(\mathscr{L}-\delta D,\ \bar g\big)$
satisfies \eqref{eq:dgn} with $\delta_0=2\delta D$ and $L=\beta$;
\item if $\mathscr{L}$ is moreover $\mu$-strongly convex, the pair
$\big(\mathscr{L}-\delta^2/\mu,\ \bar g\big)$ satisfies \eqref{eq:dgn} on all of $\R^d$,
with $\delta_0\le2\delta^2/\mu$ and $L=2\beta$.
\end{enumerate}
\end{lemma}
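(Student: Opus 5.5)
The proof is a direct computation around the Bregman remainder
\[
r(x,y):=\mathscr{L}(y)-\mathscr{L}(x)-\ip{\nabla\mathscr{L}(x)}{y-x},
\]
which convexity and $\beta$-smoothness confine to $0\le r(x,y)\le\frac\beta2\norm{y-x}^2$. Under $\mu$-strong convexity the lower bound improves to $r(x,y)\ge\frac\mu2\norm{y-x}^2$. Write $e(x,y):=\ip{\nabla\mathscr{L}(x)-\bar g(x)}{y-x}$. Cauchy--Schwarz and the bias bound of Proposition~\ref{prop:oracle} give $|e|\le\delta s$ with $s=\norm{y-x}$. For a candidate $f_{\delta_0}=\mathscr{L}-\kappa_0$, with $\kappa_0$ a constant shift, the quantity in \eqref{eq:dgn} is then exactly
\[
\mathscr{L}(y)-f_{\delta_0}(x)-\ip{\bar g(x)}{y-x}=r(x,y)+e(x,y)+\kappa_0 .
\]
So the whole lemma reduces to choosing $\kappa_0$ so that this scalar expression is nonnegative and bounded above by $\frac L2s^2+\delta_0$.

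\emph{Part (a).} Take $\kappa_0=\delta D$. On a set of diameter $D$ we have $s\le D$, so $|e|\le\delta D$. The lower bound follows from $r\ge0$ and $e+\delta D\ge0$. The upper bound is $r+e+\delta D\le\frac\beta2s^2+2\delta D$. This gives $L=\beta$ and $\delta_0=2\delta D$.

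\emph{Part (b).} No diameter is available, so the linear error $\delta s$ must be absorbed by the quadratic terms on both sides. Take $\kappa_0=\delta^2/\mu$.
\begin{itemize}
\item For the lower bound, use strong convexity:
\[
r+e+\frac{\delta^2}\mu\ \ge\ \frac\mu2s^2-\delta s+\frac{\delta^2}\mu\ \ge\ \frac{\delta^2}{2\mu}\ \ge\ 0,
\]
since $\frac\mu2s^2-\delta s$ is minimized at $s=\delta/\mu$ with value $-\frac{\delta^2}{2\mu}$.
\item For the upper bound, apply Young's inequality $\delta s\le\frac\beta2s^2+\frac{\delta^2}{2\beta}$. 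Together with $r\le\frac\beta2s^2$ this gives
\[
r+e+\frac{\delta^2}\mu\ \le\ \beta s^2+\frac{\delta^2}{2\beta}+\frac{\delta^2}\mu\ \le\ \frac{2\beta}2s^2+\frac{3\delta^2}{2\mu},
\]
using $\mu\le\beta$. This is \eqref{eq:dgn} with $L=2\beta$ and $\delta_0=\frac{3\delta^2}{2\mu}\le\frac{2\delta^2}\mu$.
\end{itemize}

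The only real choice is the shift $\kappa_0$. In (a) it must cover the worst-case inner-product error over the whole domain. In (b) it must be large enough to keep the lower side nonnegative once the bias is traded against the strong-convexity curvature; this lower side is the step I expect to need care. Doubling the smoothness constant is the price of absorbing the bias on the upper side. Note also that $\bar g$ is the \emph{mean} of the oracle. The zero-mean noise of Proposition~\ref{prop:oracle} is left aside here and enters \eqref{eq:dg34} separately through $\sigma$.
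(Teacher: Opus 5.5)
Your proof is correct and follows the paper's argument essentially line by line. You use the same split of the bracketed quantity into the Bregman remainder, plus the bias inner product bounded by $\delta\norm{y-x}$, plus the constant shift. You take the same shifts $\delta D$ and $\delta^2/\mu$, and the same Young inequality $\delta s\le\frac\beta2s^2+\frac{\delta^2}{2\beta}$ on the upper side of (b). The differences are cosmetic: you complete the square where the paper checks a discriminant, and you make explicit (via $\mu\le\beta$) the final step $\delta_0\le\frac{3\delta^2}{2\mu}\le\frac{2\delta^2}{\mu}$ that the paper leaves implicit.
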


\begin{proof}
Write $\Delta=\nabla\mathscr{L}(x)-\bar g(x)$, so $\norm\Delta\le\delta$, put
$a=\norm{y-x}$, and let $c$ be the constant subtracted from $\mathscr{L}$. The quantity to
bracket in \eqref{eq:dgn} is
\[
\underbrace{\mathscr{L}(y)-\mathscr{L}(x)-\ip{\nabla\mathscr{L}(x)}{y-x}}_{=:B}
\ +\ \ip{\Delta}{y-x}\ +\ c ,
\qquad |\ip{\Delta}{y-x}|\le\delta a .
\]
In case (a), $c=\delta D$ and $a\le D$: convexity gives $B\ge0$, so the quantity is at
least $\delta D-\delta a\ge0$, and smoothness gives $B\le\frac\beta2a^2$, so it is at most
$\frac\beta2a^2+2\delta D$. In case (b), $c=\delta^2/\mu$ and $a$ is free: strong
convexity gives $B\ge\frac\mu2a^2$, so the quantity is at least
$\frac\mu2a^2-\delta a+\frac{\delta^2}\mu$, a quadratic of discriminant
$\delta^2-2\delta^2<0$ and hence non-negative. Smoothness and
$\delta a\le\frac\beta2a^2+\frac{\delta^2}{2\beta}$ bound it by
$\beta a^2+\frac{\delta^2}\mu+\frac{\delta^2}{2\beta}$.
\end{proof}

Conversion (a) is linear in $\delta$ and charges a diameter, (b) is quadratic in $\delta$
and charges nothing. Since the per-call compute carries $\delta^{2-\gamma}$ with a
negative exponent, the one admitting the larger $\delta$ is the cheaper, and that is what
separates the two propositions.

\subsection{The convex case}

Run the method on the ball of radius $d_0$ about $y^\ast$, of diameter $2d_0$. By
Lemma~\ref{lem:convert}(a) the oracle is a $(4\delta d_0,\beta)$-oracle there, and
\eqref{eq:dg34} with $L=\beta$ and $R=d_0$ reads
\begin{equation}\label{eq:acc-cvx}
\E\big[\mathscr{L}(y_T)-\mathscr{L}^\ast\big]\ \lesssim\
\underbrace{\frac{\beta d_0^2}{T^2}}_{\text{accelerated}}
\ +\ \underbrace{T\,\delta d_0}_{\text{accumulated bias}}
\ +\ \underbrace{\frac{\sigma d_0}{\sqrt T}}_{\text{not accelerable}} ,
\end{equation}
where the middle term is the $O(T\delta)$ accumulation of \citet{devolder2014} and the
last is the statistical term of the optimal stochastic rate, attained by AC-SA
\citep{lan2012} and unimprovable by any first-order method \citep{nemirovski2009}.

\begin{proof}[Proof of Proposition~\ref{prop:acc-cvx}, \textbf{above the threshold}]
Setting each term of \eqref{eq:acc-cvx} to $\varepsilon$ in turn,
\[
\frac{\beta d_0^2}{T^2}=\varepsilon
\ \Rightarrow\ T\asymp d_0\sqrt{\frac\beta\varepsilon},
\qquad
T\delta d_0=\varepsilon
\ \Rightarrow\ \delta\asymp\frac{\varepsilon^{3/2}}{d_0^2\sqrt\beta},
\qquad
\frac{\sigma d_0}{\sqrt T}=\varepsilon
\ \Rightarrow\ \sigma^2\asymp\frac{\varepsilon^{3/2}\sqrt\beta}{d_0}.
\]
In the HTMC regime, the compute will be $T$ times the cost per call $C_\gamma c^\gamma\delta^{2-\gamma}\sigma^{-2}$
of Proposition~\ref{prop:oracle}. Collecting the exponent of each variable,
\[
\varepsilon:\ -\tfrac12+\tfrac32(2-\gamma)-\tfrac32=1-\tfrac{3\gamma}2,
\qquad
d_0:\ 1-2(2-\gamma)+1=2\gamma-2,
\qquad
\beta:\ \tfrac12-\tfrac{2-\gamma}2-\tfrac12=\tfrac{\gamma-2}2 ,
\]
so the compute is of order
$c^\gamma d_0^{2\gamma-2}\beta^{(\gamma-2)/2}\varepsilon^{-(3\gamma-2)/2}$. Dividing by
the bound $\Lambda_\gamma(cd_0/\varepsilon)^\gamma$ of Theorem~\ref{thm:main}(a) leaves
$d_0^{\gamma-2}\beta^{(\gamma-2)/2}\varepsilon^{1-\gamma/2}$, which is $T^{\gamma-2}$.
The other two regimes are treated in Appendix~\ref{app:acc-below}.
\end{proof}

With momentum the bias accumulates over the $T$ steps instead of being paid once.
\textbf{Any possible improvement through acceleration therefore vanishes as
$\gamma\nearrow2$:} the exponent of $\varepsilon$ rises by $\frac{\gamma-2}2$, that of
$d_0$ by $\gamma-2$, and the smoothness constant, absent from
Theorem~\ref{thm:main}(a), returns as $\beta^{(\gamma-2)/2}$. In the convex case,
acceleration degrades the bound exactly on the HTMC regime.

\subsection{The strongly convex case}

What meets the cost model is the number of oracle draws and not the horizon. Under strong
convexity we restart, rerunning the method from its own output whenever the expected
squared distance to the optimum has halved, as \citet{ghadimilan2012} and
\citet{dvurechensky2016} do. Conversion (b) needs no domain, so \eqref{eq:dg34} may be
applied again and again, each time from where the previous application stopped. Strong
convexity is what makes that pay: it turns a small loss into a small distance, so each
application starts closer than the last.

Let a stage be $N$ steps started from a point at expected squared distance $R_j^2$. With
$L=2\beta$ and $\delta_0\le2\delta^2/\mu$, and taking expectations over the start,
\eqref{eq:dg34} reads
\begin{equation}\label{eq:acc-sc}
\E\big[\mathscr{L}(y_{j+1})-\mathscr{L}^\ast\big]\ \lesssim\
\underbrace{\frac{\beta R_j^2}{N^2}}_{\text{accelerated}}
\ +\ \underbrace{\frac{N\delta^2}{\mu}}_{\text{accumulated bias}}
\ +\ \underbrace{\frac{\sigma R_j}{\sqrt N}}_{\text{not accelerable}} ,
\end{equation}
the middle term carrying $\delta^2$ and not $\delta$, strong convexity turning a gradient
bias into a loss quadratically. Its three terms are at most $\mu R_j^2/12$ each as soon as
\begin{equation}\label{eq:sc-cond}
N^2\ \gtrsim\ \kappa,
\qquad
\delta^2\ \lesssim\ \frac{\mu^2R_j^2}{N},
\qquad
\sigma^2\ \lesssim\ \mu^2R_j^2N .
\end{equation}
Their sum is then at most $\mu R_j^2/4$, and since strong convexity gives
$\norm{y-y^\ast}^2\le\frac2\mu(\mathscr{L}(y)-\mathscr{L}^\ast)$, the stage ends at
expected squared distance at most $R_j^2/2$. Chaining from $R_0=d_0$, the loss after the
$k$-th stage is at most $\mu R_{k-1}^2/4=\mu d_0^22^{-(k+1)}$, which is $\varepsilon$
after
\begin{equation}\label{eq:sc-horizon}
k\ \asymp\ \log\frac{\mu d_0^2}\varepsilon
\quad\text{stages, hence}\quad
T=kN\ \asymp\ \sqrt\kappa\,\log\frac{\mu d_0^2}\varepsilon \quad\text{steps.}
\end{equation}
Two features of \eqref{eq:sc-cond} fix the cost. The stage length $N\asymp\sqrt\kappa$
does not depend on $j$. And $\delta$ and $\sigma$ are bought once for the whole run while
$R_j$ shrinks, so both budgets bind at the last stage, where $R_j^2\asymp\varepsilon/\mu$:
\begin{equation}\label{eq:sc-budget}
\delta^2\ \asymp\ \frac{\mu\varepsilon}{\sqrt\kappa},
\qquad
\sigma^2\ \asymp\ \mu\varepsilon\sqrt\kappa .
\end{equation}

\begin{proof}[Proof of Proposition~\ref{prop:acc-sc}]
The compute is the number of calls, $T$ from \eqref{eq:sc-horizon}, times
$C_\gamma c^\gamma\delta^{2-\gamma}\sigma^{-2}$. With \eqref{eq:sc-budget},
\[
\E[\Cost]\ \asymp\
\sqrt\kappa\,\log\frac{\mu d_0^2}\varepsilon\cdot C_\gamma c^\gamma
\Big(\frac{\mu\varepsilon}{\sqrt\kappa}\Big)^{\frac{2-\gamma}2}
\frac1{\mu\varepsilon\sqrt\kappa}
\ =\ C_\gamma\Big(\frac c{\sqrt{\mu\varepsilon}}\Big)^{\!\gamma}
\kappa^{\frac{\gamma-2}4}\log\frac{\mu d_0^2}\varepsilon ,
\]
the exponent of $\kappa$ being $\frac12-\frac{2-\gamma}4-\frac12$ and that of
$\mu\varepsilon$ being $\frac{2-\gamma}2-1$. Dividing by the bound of
Theorem~\ref{thm:main}(b) leaves the announced multiplier, whose three factors are the
three budgets,
\[
\underbrace{\kappa^{-1/2}}_{\text{fewer steps}}\cdot
\underbrace{\kappa^{(\gamma-2)/4}}_{\text{sharper bias}}\cdot
\underbrace{\kappa^{1/2}}_{\text{tighter variance}}
\ =\ \kappa^{\frac{\gamma-2}4} :
\]
the horizon shrinks by $\kappa^{-1/2}$, the bias budget tightens by $\kappa^{1/4}$ and so
multiplies $\delta^{2-\gamma}$ by $\kappa^{(\gamma-2)/4}$, and the variance budget tightens
by $\kappa^{-1/2}$ and so multiplies $\sigma^{-2}$ by $\kappa^{1/2}$. That the same
multiplier holds in all three regimes is checked in Appendix~\ref{app:acc-below}.
\end{proof}

The exponent in $\varepsilon$ is $\gamma/2$ either way: acceleration does not change what
accuracy costs, and above $\gamma=2$ it costs a power of the conditioning. Unlike the
convex case the amplification is bounded here, a power of the conditioning rather than of
the horizon, so acceleration is not lost outright; what it could still win is the
logarithm, which Proposition~\ref{prop:schedule} already wins by scheduling the accuracies
instead.

\subsection{Below and at the threshold}
\label{app:acc-below}

The two proofs above balanced the channels against the HTMC branch of
Proposition~\ref{prop:oracle}. The other two branches are balanced identically, only the
per-call compute changes, and this completes Proposition~\ref{prop:acc-cvx}.

\paragraph{Convex.} With $T\asymp d_0\sqrt{\beta/\varepsilon}$,
$\delta\asymp\varepsilon^{3/2}/(d_0^2\sqrt\beta)$ and
$\sigma^2\asymp\varepsilon^{3/2}\sqrt\beta/d_0$ as before, the per-call compute
$(c/\sigma)^\gamma$ of the branch $\gamma<2$ gives
\[
\E[\Cost]\ \asymp\ c^\gamma d_0^{1+\gamma/2}\,\beta^{(2-\gamma)/4}\,
\varepsilon^{-(2+3\gamma)/4},
\]
which is $T^{(\gamma-2)/2}$ times the unaccelerated bound of the same regime, the
ETMC row of the table following Corollary~\ref{cor:eta}; that multiplier is smaller
than one below the threshold, so acceleration helps exactly where the compute is not
$\eta$-invariant. At $\gamma=0$ it reduces to $d_0\sqrt{\beta/\varepsilon}$, the
iteration count of the exact accelerated method, each of whose steps is free. At
$\gamma=2$ the branch $c^2\sigma^{-2}\log^2(\sigma/\delta)$ gives
$c^2d_0^2\varepsilon^{-2}\log^2(\beta d_0^2/\varepsilon)$, again the unaccelerated
bound up to a constant, the multiplier being $1$. The three multipliers
$T^{\gamma-2}$, $T^{(\gamma-2)/2}$ and $1$ do cross at $\gamma=2$; the bounds they
multiply do not, the $\log^2$ standing in, as in Appendix~\ref{app:etmc}, for a
constant that diverges on either side of the threshold.

\paragraph{Strongly convex.} With $T\asymp\sqrt\kappa\log$ and
$\sigma^2\asymp\mu\varepsilon\sqrt\kappa$, the branch $\gamma<2$ gives
\[
\E[\Cost]\ \asymp\ c^\gamma(\mu\varepsilon)^{-\gamma/2}\,\kappa^{(2-\gamma)/4}\log,
\]
against $c^\gamma(\mu\varepsilon)^{-\gamma/2}\kappa^{1-\gamma/2}\log$ unaccelerated,
a multiplier $\kappa^{(\gamma-2)/4}$: the same as above the threshold. The reason is
that $\sigma/\delta=\sqrt\kappa$ in both methods and in every regime, so the factor of
Proposition~\ref{prop:oracle} that depends on that ratio cancels in the comparison and
only the homogeneity of degree $-\gamma$ survives.

\section{The numerical experiment in detail}
\label{app:protocol}

Every choice below was made for a reason, and where the reason is a measurement the
measurement is reported. We split the explanations as follows, in three parts: what the instance is and how
its rate is read, how the two methods are run and compared, and how far the
measurement reaches.

Throughout, $A\in\R^{m\times n}$ has columns ordered by decreasing norm,
$\norm{A_i}=i^{-1/\varphi}$, $A_{\le k}$ is $A$ with every column past the $k$-th zeroed and
$R_k=A-A_{\le k}$ holds the discarded tail. The residual is $r(X)=AX-b$, the objective
$L(X)=\tfrac12\norm{r(X)}^2$ and $L_0=L(0)$. Level $k$ has the two truncations of Remark~\ref{rem:reading}:
$A_k(X)=A_{\le k}^\top(A_{\le k}X-b)$, which the runs use, at $4mk$ flops, and
$A_{\le k}^\top r(X)$, on which the rate is read, at $2mk$. Bias means
$\norm{\,\cdot\,-\nabla L}$ in both cases, and $\delta_k$ is that of $A_{\le k}^\top r$.
 The rate the design predicts is $\gamma=2\varphi/(2-\varphi)$, and
$\gamma_{\mathrm{env}}$ denotes the one measured on a given seed.

\begin{proof}[Proof of Proposition~\ref{prop:lsq}]

Write $R_k=A-A_{\le k}$. Adding and subtracting $A_{\le k}^\top r(X)$ splits the error in
two,
\[
\nabla L(X)-A_k(X)=R_k^\top r(X)+A_{\le k}^\top R_kX ,
\]
the discarded tail of the true gradient, and the cross term the restriction creates. Hence
\[
\norm{\nabla L-A_k}\ \le\ \norm{R_k}\big(\norm{r}+\norm{A_{\le k}}\norm X\big)\ \le\
\norm{R_k}\,M
\]
on the ball, and dropping the second term gives the statement for $A_{\le k}^\top r$.
Since the columns are ordered,
$\norm{R_k}\le(\sum_{i>k}\norm{A_i}^2)^{1/2}\asymp k^{1/2-1/\varphi}$, the sum converging
exactly for $\varphi<2$, and it is the only factor depending on $k$. An accuracy $\delta$
therefore needs $k\asymp(M/\delta)^{2\varphi/(2-\varphi)}$ features at cost $\Theta(mk)$,
and $\tfrac{2\varphi}{2-\varphi}>2$ iff $\varphi>1$.
\end{proof}

\subsection{The instance, and the rate it is designed to have}
\label{app:part-instance}

The rate is a property of the design and not of the optimizer, so everything in this
part is measured at fixed points, before any descent is run.

\subsubsection{Configuration, and the constraints that fix it}
\label{app:instance}

Section~\ref{sec:lsq} fixes $\varphi$, $m$, the ladder and $n$. The full configuration is

\begin{center}
\setlength{\tabcolsep}{4pt}
\begin{tabular}{llll}
\hline
$\varphi=1.2$, so $\gamma=3$ & $m=2048$ & $k_{\max}=512=m/4$ & ladder $\{1,8,64,512\}$\\
$\rho=0.6$ & $n=131072=256\,k_{\max}$ & seeds $0,\dots,9$ & $C\in\{1,2,4,8,16,32\}$\\
\hline
\end{tabular}
\end{center}

The experiment is run over 10 seeds, with $\gamma=3$. We now get more into detail as for why the other parameters are what they are.

\paragraph{Level ratio of $8$.} The ratio on columns is $2^\gamma$, which is exactly the
ratio for which the bias halves from one level to the next, that is, the $2^{-k}$
normalization of Assumption~\ref{ass:dyadic}. A finer ladder is legal but pays for it: the
variance of the telescope carries a factor $(1-r^{\gamma/2-1})^{-2}$ in the bias ratio $r$,
worth $12$ at a column ratio of $8$, $24$ at $4$ and $84$ at $2$ when $\gamma=3$. The number of levels of the multilevel oracle is then fixed by what a feasible $m$ allows,
and what makes an $m$ infeasible is compute: at $k_{\max}=m/4$, a single call to the finest level already costs $m^2/2$ flops, before any sweep and before any seed.

\paragraph{$k_{\max}$ well below $m$.} This protects the \emph{loss floor} of a level, the
squared distance from $b$ to the span of the first $k$ columns, which behaves as
$k^{-2/\gamma}(m-k)/m$. The first $k$ columns span a $k$-dimensional subspace of $\R^m$, so
at $k=m$ they span everything: level $k$ then fits $b$ exactly, its floor is zero, and
there is nothing left for a finer level to improve on. The floor leaves the power law well
before that, so $k_{\max}=m/4$ keeps the distortion under a quarter at the finest level.
The same formula returns in Appendix~\ref{app:limits}, where it sets the tightest target
the experiment may ask for.

\paragraph{$k_{\max}$ well below $n$.} This protects the \emph{bias} of a level, which is
the size of the tail it discards, $(\sum_{i>k}\norm{A_i}^2)^{1/2}$. That sum runs over
every feature past $k$, so a design stopping at a finite $n$ is missing its far end: the finest
level suffers most, its bias comes out understated, and $\gamma$ with it. Here
$n=256\,k_{\max}$, and Appendix~\ref{app:limits} measures what the truncation still costs
at that ratio.

\paragraph{The sweep over $C$.} The theory fixes $C$ in closed form once $\delta$ and
$\sigma$ are chosen (Appendix~\ref{app:oracle}), but that value carries the conservative
constants of the proof: the triangle bound it rests on is measured at $0.88$ against the
$2$ it assumes. $C$ is therefore swept rather than computed, over a range that carries the
method from one always-evaluated level to three (Appendix~\ref{app:methods}).

That leaves $\rho$, the weight of a direction shared by every column. It is the one entry
of the table that is not a size, and without it the instance would not be in the HTMC
regime at all: Appendix~\ref{app:correlated} is why.

\subsubsection{Correlated columns, and why they are necessary}
\label{app:correlated}

 Assumption~\ref{ass:dyadic} constrains the envelope of the bias, the largest value each
level reaches over the domain. For $A_{\le k}^\top r$, the truncation the rate is read off, that envelope is worth measuring only if it is a single power law over the whole ladder. With independent columns it is not: the gradient at a point has two parts, and only one of them carries $\gamma$:
\[
\underbrace{X_j\norm{A_j}^2}_{\text{signal},\ \sim\,j^{-2/\varphi}}
\qquad\text{and}\qquad
\underbrace{\norm{A_j}\norm{b}/\sqrt m}_{\text{leakage},\ \sim\,j^{-1/\varphi}} .
\]
The slow term is the one that carries $\gamma$, and with independent Gaussian columns it is
damped by $\sqrt m$, because near-orthogonal columns barely see one another. The signal term
then dominates up to $j_\ast=(\sqrt m/\norm b)^{\varphi}$, and below that crossing the
measured exponent is $2\varphi/(4-\varphi)$, which never exceeds $2$ for any admissible
$\varphi$: $2\varphi/(4-\varphi)<2$ is exactly $\varphi<2$, and $\varphi<2$ is already
forced by $\sum_i\norm{A_i}^2<\infty$. The coarse end of the ladder would not be HTMC at
all. The usable range above $j_\ast$ grows only as
$m^{0.4}$,\footnote{$\norm b=\Theta(1)$ as $m$ grows, since
$\norm b^2=\sum_i\norm{A_i}^2X^{\ast\,2}_i$ converges for $\varphi<2$, so
$j_\ast\propto m^{\varphi/2}$ while the top of the ladder is $k_{\max}\propto m$. The usable
range is their ratio, $m^{1-\varphi/2}$, which is $m^{0.4}$ at $\varphi=1.2$: multiplying
$m$ by a thousand buys a factor $16$.} so no feasible $m$ repairs it.

A direction shared by every column removes the damping and drops the crossing to $O(1)$.
Measured at $m=2048$, fitting over the whole ladder:
\begin{center}
\setlength{\tabcolsep}{9pt}
\begin{tabular}{lcccc}
\hline
$\rho$ & $0.0$ & $0.2$ & $0.4$ & $0.6$\\
$\gamma$ measured & $1.45$ & $1.93$ & $2.77$ & $3.00$\\
\hline
\end{tabular}
\end{center}
The transition is smooth rather than a knife edge, and correlated features are more
realistic than an isotropic design. We take $\rho=0.6$, where the fit has $R^2=0.995$ over
the whole ladder. To be precise, $\rho$ does not change $\gamma$, which is fixed by
$\varphi$ alone: the columns are rescaled so that $\norm{A_i}=i^{-1/\varphi}$ exactly
whatever $\rho$ is. What $\rho$ moves is the crossing $j_\ast$ below which the signal term
dominates and the profile follows $2\varphi/(4-\varphi)$ instead. A fit over the whole
ladder then averages two regimes, which is what the low entries report. $\rho=0.6$ is not
the value that returns $3$, it is the value at which $j_\ast$ falls below the ladder and a single regime is left to measure.

\subsection{The two truncations, and which one measures the rate}

\subsubsection{What happens with an exact residual}
\label{app:residual}

One choice remains before the rate read in Appendix~\ref{app:reading} is pinned down: which of two
truncations the levels are. Both are available at level $k$, and the choice is not cosmetic:
\[
\text{(a)}\quad A_{\le k}^\top\big(A_{\le k}X_{:k}-b\big)=A_{\le k}^\top\big(A_{\le k}X-b\big),
\qquad\qquad
\text{(b)}\quad \big[A^\top(AX-b)\big]_{:k}=A_{\le k}^\top\big(AX-b\big) .
\]
Written this way the two differ in one symbol, $A_{\le k}$ against $A$ inside the residual,
and they agree while the iterate lives in the first $k$ coordinates. Once it carries mass
beyond $k$, coming from a finer draw at an earlier step, (a) rebuilds a residual that
pretends the columns between $k$ and $k_{\max}$ were never used. The two then differ by the
\emph{cross term} $A_{\le k}^\top R_kX$, where $R_k=A-A_{\le k}$ holds the discarded
columns (and this term indeed cancels out at $k=k_{\max}) $.

Both forms are covered by Proposition~\ref{prop:lsq}. Form (b) has for error the discarded
tail $R_k^\top r$ of the exact gradient and nothing else while form (a) carries that tail plus
the cross term, and the resulting bound
$\norm{R_k}\big(\norm{A_{\le k}}\norm{X}+\norm{r}\big)$ leaves $\norm{R_k}$ as the only
factor depending on $k$. The rate is therefore the same for both, and only the prefactor $M$ changes.

What differs is whether the bound is attained. The cross term is not a power law but a
window $(k,k_{\max}]$ that closes: it dominates at the coarse end of the ladder and
vanishes at the finest level, where the iterate carries no mass beyond $k$ and the two
forms agree identically. The slack of (a) therefore drifts along the ladder, from $1.0$ to
$4.9$, against $1.00$ to $1.03$ for (b), and a drift of that size is enough to move the
fitted exponent: $0.97$ for (a) against $2.91$ for (b) and a closed form of $3$
(Figure~\ref{fig:residual}). Form (a) satisfies Assumption~\ref{ass:dyadic} and does not
measure it. It is also the most computationally expensive of the two, by up to $23$ at equal
accuracy and never less.

\begin{figure}[H]
\begin{minipage}[t]{0.49\textwidth}
\vspace{0pt}
\centering
\includegraphics[width=\textwidth]{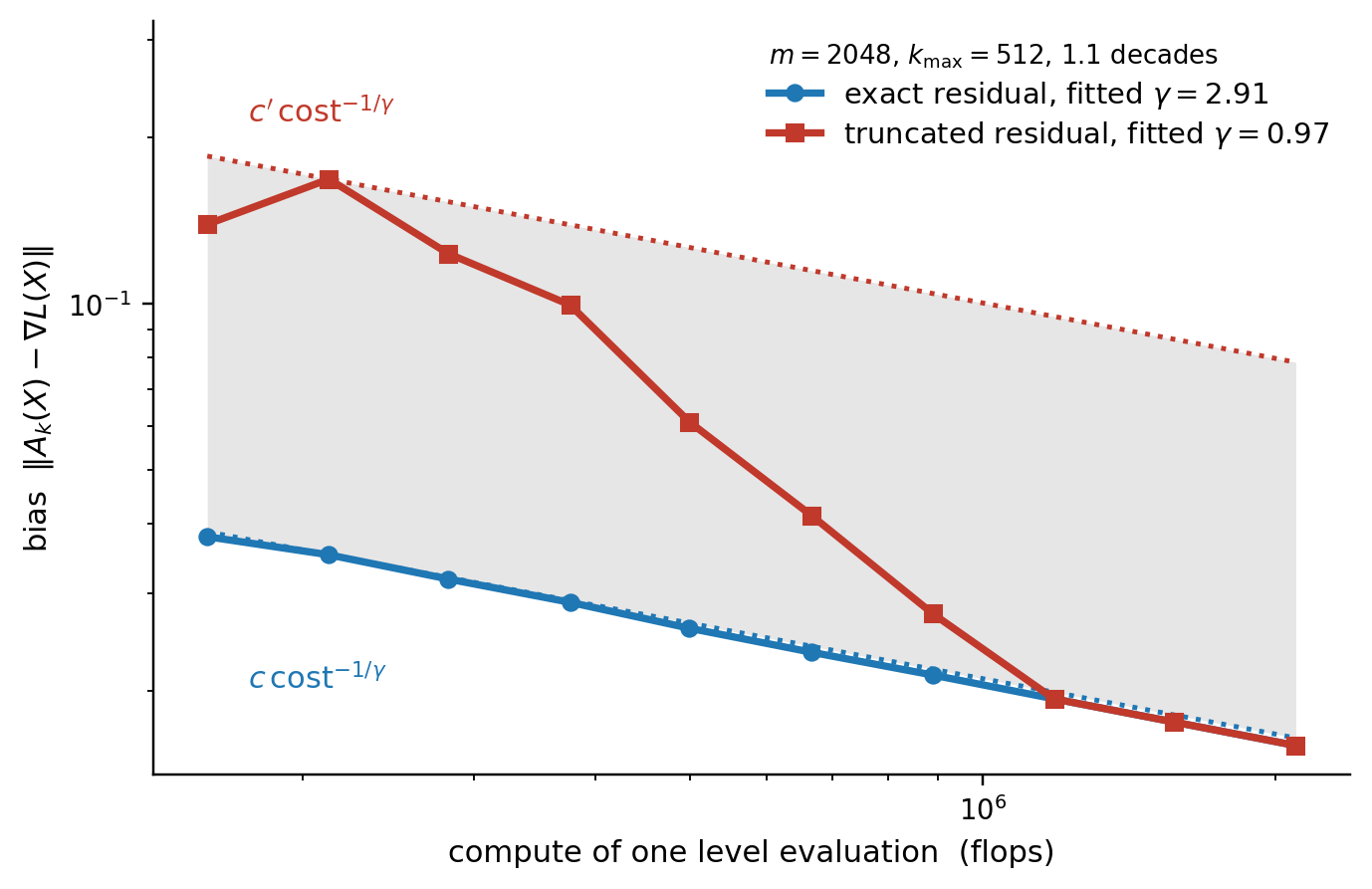}
\end{minipage}\hfill
\begin{minipage}[t]{0.49\textwidth}
\vspace{0pt}
\centering
\includegraphics[width=\textwidth]{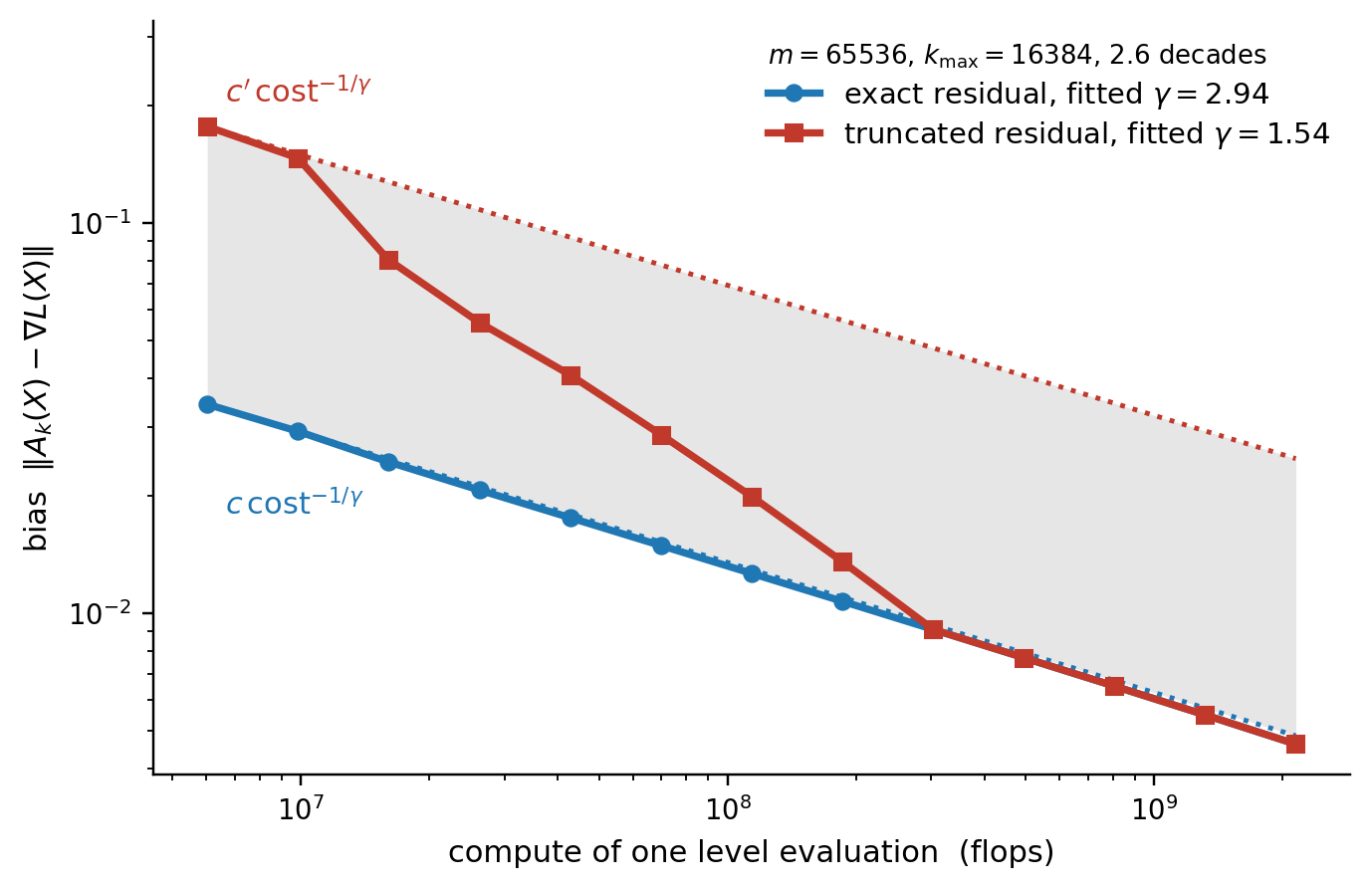}
\end{minipage}
\caption{The two truncations, bias against the compute of one level, at $m=2048$
(\textbf{left}) and $m=65536$ (\textbf{right}). The bias of a level is read as the largest
it reaches along a descent, which is the quantity Assumption~\ref{ass:dyadic} bounds. Each
dotted line is the envelope $c\,\mathrm{cost}^{-1/\gamma}$ the assumption grants its form and 
the rate is the same for both, so the two are parallel and the shaded band between them is
the factor by which $M$ differs, $4.8$ on the left and $5.1$ on the right.}
\label{fig:residual}
\end{figure}

Form (b) lies on the lower envelope throughout, to within $3\%$ and $5\%$, and a fit through it returns
$2.91$ and $2.94$ against a design value of $3$. Form (a) starts on the upper envelope and
falls across the band to meet (b) at the finest level, the cross term being a window that
closes rather than a power law. A line through that shape reports $0.97$ on the left and $1.54$ on the right. Neither
estimates $\gamma$, badly or otherwise: form (a) has the same rate as form (b) by
Proposition~\ref{prop:lsq}, and a line fitted to a shape that is not a power law returns
nothing in particular and evidence can be found in the fact that the two readings disagree, since a rate does
not move with the scale. What fails is the reading, not the rate.
Form (b) is thus a choice of measuring instrument rather than a correction, and it costs
one thing: it needs the residual $r=AX-b$, which is maintained rather than recomputed.

\begin{lemma}[The residual is amortized]\label{lem:amortized}
Write $O_t$ for the compute of the oracle call at step $t$ and $M_t$ for that of the
residual update it forces. Then $M_t\le O_t$ at every step of either method, so that
\[
\sum_t\big(O_t+M_t\big)\ \le\ 2\sum_tO_t ,
\]
and Assumption~\ref{ass:dyadic} holds along the run with $c$ replaced by $2^{1/\gamma}c$.
\end{lemma}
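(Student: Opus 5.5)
The plan is to bound the residual-update cost step by step against the oracle cost of the same step, using that the iterate only changes on the coordinates the oracle just read. At step $t$ the update is $X_{t+1}=X_t-\eta g_t$, and the residual is maintained by $r(X_{t+1})=r(X_t)-\eta A g_t$. The first thing I would establish is a support statement. For form (b) the level-$k$ output $A_{\le k}^\top r$ is supported on the first $k$ coordinates, so $A_{\le k}^\top r$ has the same support. For the multilevel estimator \eqref{eq:mlmc}, each increment $A_k-A_{k-1}$ is likewise supported on the first $k$ coordinates. Hence $g_t$ is supported on the first $K_t$ coordinates, where $K_t$ is the largest level whose Bernoulli fired (or the fixed level, for the baseline).

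Next I would compute the cost of the update. Computing $Ag_t=A_{\le K_t}g_t$ reads only $K_t$ columns, which costs $2mK_t$ flops, plus $O(m)$ for the axpy. That gives $M_t\le 2mK_t$ up to the lower-order $O(m)$ term.

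On the other side, the oracle call at step $t$ evaluated at least level $K_t$. In form (b) this costs $2mK_t$ flops, namely the product $A_{\le K_t}^\top r$, so $M_t\le O_t$, with the axpy absorbed. I would make the flop accounting match the paper's conventions ($2mk$ for (b), $4mk$ for (a)). For form (a) the oracle costs $4mK_t$ and the inequality has room to spare. If the increment at level $k$ also evaluates level $k-1$, that only increases $O_t$. For the fixed-level baseline $K_t=k$ deterministically and the same comparison applies.

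The main obstacle is the lower-order $O(m)$ term of the vector update, together with the case in which no Bernoulli fires. In that case $g_t=0$, so no update is needed and $M_t=0$. Otherwise $O_t\ge 2m\ge$ the axpy cost when $K_t\ge1$, so I would fold the axpy into the count, for instance by noting that the product and the axpy fuse into $2mK_t$ multiply-adds. Summing then gives $\sum_t(O_t+M_t)\le2\sum_tO_t$.

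Finally, doubling the cost of every level turns $\Cost(A_k)\le c^\gamma2^{\gamma k}$ into $2c^\gamma2^{\gamma k}=(2^{1/\gamma}c)^\gamma2^{\gamma k}$. This is Assumption~\ref{ass:dyadic} along the run with prefactor $2^{1/\gamma}c$, so every bound of Theorem~\ref{thm:main} applies with that constant.
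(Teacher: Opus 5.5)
Your proposal is correct and follows the paper's proof. The returned vector is supported on the first $K_t$ coordinates, with $K_t$ the finest level drawn, so the residual update costs $2mK_t$. The oracle has already paid at least $2mK_t$ to evaluate that level; the paper writes this as $k_t=\max_{B_k=1}k\le\sum_kB_k(k+k^-)=O_t/2m$, under the same convention that a drawn increment pays for the level it reaches. Doubling $c^\gamma$ then gives the prefactor $2^{1/\gamma}c$. Your extra care with the $O(m)$ axpy term and with form (a) goes beyond what the paper checks, since it restricts the lemma to the exact truncation, the only one carrying a residual, but it is harmless.
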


\begin{proof}
Throughout this part $A_k$ denotes the exact truncation $A_{\le k}^\top r$, the only one
that carries a residual and therefore the only one the lemma concerns.
 Write $\tilde f_t$ for the vector the oracle returns at step $t$ and $k_t$ for its support, $k_t$ thus being the finest level on which the oracle calls upon at step $t$. The step $X_{t+1}=X_t-\eta\tilde f_t$ moves the
residual by
\[
r(X_{t+1})\ =\ A\big(X_t-\eta\tilde f_t\big)-b\ =\ r(X_t)-\eta\,A\tilde f_t
=\ r(X_t)-\eta\,A_{\le k_t}\big[\tilde f_t\big]_{:k_t},
\]
the columns past $k_t$ multiplying zeros. This is a product of shape $(m,k_t)$, so
$M_t=2mk_t$ and we now bound $k_t$ by $O_t/2m$.

\emph{Fixed level $k$.} Here $\tilde f_t=A_{\le k}^\top r(X_t)$, supported on the first
$k$ coordinates and computed at cost $O_t=2mk$. Hence $k_t=k$ and $M_t=O_t$.

\emph{Telescope.} Let $k^-$ be the predecessor of $k$ in the ladder, $k^-=0$ at the
coarsest level where $A_0=0$, and let $B_k\in\{0,1\}$ record whether the increment reaching
$k$ was drawn.

Then \eqref{eq:mlmc} reads
\[
\tilde f_t\ =\ \sum_k\frac{B_k}{p_k}\Big(A_k-A_{k^-}\Big)(X_t),
\qquad
O_t\ =\ 2m\sum_kB_k\,(k+k^-),
\]
each drawn increment paying for the two levels it spans. Since $A_k-A_{k^-}$ is supported
on the first $k$ coordinates, $k_t=\max\{k:B_k=1\}$, and the widths being nonnegative,
\[
k_t\ =\ \max_{k\,:\,B_k=1}k\ \le\ \sum_kB_k\,k\ \le\ \sum_kB_k\,(k+k^-)\ =\ \frac{O_t}{2m},
\]
for every realization of the $B_k$, and not merely in expectation.

Summing over $t$ gives the display. Each call obeys $\Cost(A_k)\le c^\gamma2^{\gamma k}$,
so the run obeys the same bound with $c^\gamma$ doubled, since
$2c^\gamma=\big(2^{1/\gamma}c\big)^\gamma$, and only $\sum_t\Cost$ enters
Theorem~\ref{thm:main} so the theorem applies verbatim to the run.
\end{proof}

Measured over both sweeps, the ratio $M_t/O_t$ never exceeds one: it is exactly one at
every fixed level, so $2^{1/\gamma}$ is attained and not merely admissible, and below one
under the telescope.

The lemma rests on both operations reading the same $m$ rows: a level-$k$ call costs $2mk$
and the update $2ms$, so the comparison reduces to $s\le k$. It fails as soon as the
oracle becomes cheaper than a full pass over the design. Suppose it estimated the gradient
from $m'\ll m$ of the rows instead of all of them: its call would then cost $2m'k$, but
the residual it moves still has $m$ entries and all of them change, so the update still
costs $2ms$. The ratio $M_t/O_t$ grows like $m/m'$, and the maintenance takes over. Form
(a) is then the only one available: it rebuilds its residual from the columns it looks at,
and is stateless where (b) presupposes a current one.

\subsubsection{Reading $\gamma$: the envelope, and where the fit starts}
\label{app:reading}

This discussion refers back to the right panel of Figure~\ref{fig:forms}, and what follows is the protocol behind
it. \\

The bias of a level is not a single number: $\norm{A_{\le k}^\top r(X)-\nabla L(X)}$ depends on the
point $X$ at which the level is called, and Assumption~\ref{ass:dyadic} bounds it over the
whole domain. We therefore read $\gamma$ off the largest bias each level reaches over six
probes: the origin, where every run starts, and five iterates of a diagnostic descent of
$400$ steps driven throughout by the finest level oracle, at steps $1$, $4$, $20$, $89$ and
$400$. They are iterates rather than random points so that a probe resembles what the
oracle meets.

Six probes suffice, which we checked rather than assumed: 16 probes move
$\gamma_{\mathrm{env}}$ by at most $0.0026$, against a standard error of $0.007$ across
seeds, while 2 move one seed by $0.13$. No single probe would do: the last one alone
returns between $1.38$ and $1.94$ across the ten seeds, where the envelope returns between
$2.92$ and $2.99$.

A level that keeps only $k$ of the $n$ columns discards almost the whole gradient when $k$
is small, so its bias is just $\norm{\nabla L}$ and barely moves as $k$ grows. The coarse
end of the ladder is therefore a plateau rather than a power law. Fitting through it mixes
the two regimes, and the damage differs from seed to seed: over the ten seeds of the run,
\begin{center}
\setlength{\tabcolsep}{9pt}
\begin{tabular}{lcccc}
\hline
& mean & spread & range & lowest $R^2$\\
\hline
fit from $k=1$ & $2.809$ & $\pm0.256$ & $[2.28,\,3.10]$ & $0.954$\\
fit from $k=32$ & $2.963$ & $\pm0.023$ & $[2.92,\,2.99]$ & $0.999$\\
\hline
\end{tabular}
\end{center}
Excluding the plateau divides the spread by eleven and brings every seed within one
percent of the closed form, where including it sends one seed as low as $2.28$. What had
looked like variation between instances was the plateau.

The fit therefore starts at $k=32$, which drops nine of the eighteen levels probed. Two
things keep that from being a tuned choice. The plateau ends at an absolute $k$ and not at
a fraction of $k_{\max}$, since what ends it is the number of columns kept and not the
size of the design, so a larger design widens the usable range instead of moving the cut.
And the cut is drawn: it is the vertical dashed line marked \emph{fit starts here} on the right panel of Figure~\ref{fig:forms}, with the excluded levels still plotted to its left.

\subsection{What is measured, and how}
\label{app:part-measure}

Only now does an optimizer appear. The two methods we compare share everything but the oracle, and
the frontier is the single object the comparison is read from.

\subsubsection{The two methods}
\label{app:methods}

Both run plain gradient descent with the same accounting, the same guards and the same
reporting. The only difference is the oracle, which is the comparison Theorem~\ref{thm:main} is about.

\paragraph{Flops.} Both methods are charged the same way, $2mk$ for a product of shape
$(m,k)$, whether it evaluates a level or moves the residual (Lemma~\ref{lem:amortized}),
so the convention cancels in any comparison between them. No diagnostic flop is charged to
either, and the loss is read off the maintained residual, so it costs nothing at all.

\paragraph{Sampling probabilities, and the clamp.} We take
$p_k=\min\{C(\delta_k/\delta_0)^{1+\gamma/2},1\}$, written against the \emph{measured}
biases rather than against the level index. On a dyadic ladder this is exactly
$\min\{C2^{-(1+\gamma/2)k},1\}$, and written this way it stays correct on any geometric
one. The clamp creates a deterministic prefix of always-evaluated levels, and sweeping $C$
sweeps its depth: one level at $C=1$ and $C=2$ and two from $C=4$ on. The finer levels never join it: the finest is drawn with probability $0.005$ at $C=1$ and
$0.15$ at $C=32$, so the method stays genuinely randomised across the whole sweep.

\paragraph{Step size.} Both methods take $\eta=1/(4\beta)$, the constraint of
Theorem~\ref{thm:main}, but not with the same $\beta$. The baseline at level $k$ never
leaves the first $k$ coordinates, so its smoothness is $\beta_k$, the top eigenvalue of
$A_{\le k}^\top A_{\le k}$, found by power iteration and recomputed for every level it is
run at. The multilevel iterate can move anywhere in the first $k_{\max}$ coordinates, the
telescope drawing a different level at every step, so it takes $\beta_{k_{\max}}$
throughout.

The asymmetry runs against us: $\beta_k$ grows with $k$, so the baseline takes the larger
step at every level and the multilevel method the smallest on the ladder, though only from
$1.09$ at $k=4$ to $1.19$ at $k=512$. Neither step is fixed at that value: each method is
run at both $\eta$ and $\eta/2$, and its frontier keeps whichever reached a target for
less.

\paragraph{Safeguards, and checks.} Both the averaged and the last iterate are recorded and
the frontier takes the better of the two, for both methods alike: the theorem bounds the
average, but averaging helps a noisy method and hurts a deterministic one, and the
comparison should not turn on that. Three guards run throughout, a flop budget, a cap on
the number of iterations, and a divergence check, since the telescope can draw a fine level
with a weight $1/p_k$ of order a thousand (a diverged trajectory is truncated and the
frontier keeps its best pass). Three properties the accounting takes for granted are then
measured rather than assumed.
\begin{center}
\small
\setlength{\tabcolsep}{10pt}
\begin{tabular}{lcc}
\hline
quantity & measured & against\\
\hline
bias of the oracle, relative & $1.6\cdot10^{-3}$ & $7\cdot10^{-3}$\\
increment over predecessor bias, $V_k/\delta_{k-1}$ & $0.88\pm0.02$ & $2$\\
drift of the maintained residual, relative & $6.9\cdot10^{-6}$ & ---\\
\hline
\end{tabular}
\end{center}

The first is the distance from the mean of $20\,000$ draws at a fixed point to the exact
gradient. Its reference is the Monte Carlo error those draws carry on their own: an
unbiased estimator would still miss by that much, so landing a factor four inside it
leaves no bias to detect. The second is the increment against the triangle bound
$\norm{g_k-g_{k-1}}\le\delta_{k-1}+\delta_k\le2\delta_{k-1}$ that the proof uses, and what
matters is that the ratio does not drift with $k$: the shape of $p_k$ then follows the
biases as assumed, and only its scale is off, which the sweep over $C$ absorbs. The third
compares the residual carried through $5000$ steps in single precision with one recomputed
from scratch and it has no reference because nothing should accumulate at all.

\subsubsection{The frontier, and the baseline it is compared to}
\label{app:frontier}

For each target loss, the cost reported is the smallest cost, over every trajectory of the
sweep, at which that loss was ever reached: a running minimum along each trajectory, then a
minimum across trajectories. This is the only honest way to compare two families without
favouring the one that happened to be better tuned. Across seeds we report the median over
the seeds that reached the target, when at least half of them did.

The baseline is swept over a finer ladder than the oracle, of ratio $2$ rather than
$2^\gamma$ and of 10 levels rather than 4. The oracle is confined to its levels because
its probabilities presuppose them, but nothing confines a practitioner choosing a
truncation level, which is why the comparison does not favour the multilevel method by confining its rival: the oracle's four
levels are among the baseline's ten, so the six extra ones can only lower what the
baseline costs at any target, never raise it.

\begin{table}[H]
\centering\small
\setlength{\tabcolsep}{6pt}
\begin{tabular}{ccccccc}
\hline
 &  &  & \multicolumn{2}{c}{exact residual} & \multicolumn{2}{c}{truncated residual}\\
\cline{4-5}\cline{6-7}
seed & $\gamma_{\mathrm{env}}$ & fixed & multilevel & separation & multilevel & separation\\
\hline
$0$ & $2.923$ & $3.357$ & $2.400$ & $0.956$ & $3.172$ & $0.185$\\
$1$ & $2.977$ & $3.275$ & $2.133$ & $1.141$ & $2.484$ & $0.791$\\
$2$ & $2.969$ & $3.972$ & $2.462$ & $1.511$ & $3.077$ & $0.896$\\
$3$ & $2.969$ & $3.522$ & $2.349$ & $1.173$ & $2.965$ & $0.557$\\
$4$ & $2.994$ & $3.663$ & $2.298$ & $1.365$ & $2.796$ & $0.866$\\
$5$ & $2.977$ & $3.441$ & $2.185$ & $1.256$ & $2.858$ & $0.583$\\
$6$ & $2.927$ & $3.366$ & $2.084$ & $1.282$ & $2.807$ & $0.559$\\
$7$ & $2.950$ & $2.886$ & $1.614$ & $1.272$ & $2.258$ & $0.628$\\
$8$ & $2.979$ & $3.232$ & $1.746$ & $1.486$ & $2.074$ & $1.158$\\
$9$ & $2.961$ & $3.663$ & $2.566$ & $1.097$ & $3.065$ & $0.598$\\
\hline
mean & $2.963$ & $3.438$ & $2.184$ & $1.254$ & $2.756$ & $0.682$\\
standard error & $0.007$ & $0.093$ & $0.097$ & $0.054$ & $0.116$ & $0.083$\\
\hline
\end{tabular}
\caption{Per-seed exponents at $m=2048$ with the ten-level baseline ladder, fitted over the
same grid of relative targets on every seed. The fixed level is one column and not two: it
is the same object in both forms. Every fit has $R^2$ between $0.82$ and $1.00$. The
multilevel exponent is the smaller on all ten seeds in both forms and the gap exceeds 1 on
nine seeds with the exact residual and on one with the truncated.}
\label{tab:seeds}
\end{table}

\begin{table}[H]
\centering\small
\renewcommand{\arraystretch}{1.2}\setlength{\tabcolsep}{6pt}
\begin{tabular}{cccccc}
\hline
 & & \multicolumn{2}{c}{exact residual} & \multicolumn{2}{c}{truncated residual}\\
\cline{3-4}\cline{5-6}
$L/L_0$ & fixed & multilevel & ratio & multilevel & ratio\\
\hline
$5.01\cdot10^{-1}$ & $8.19\cdot10^{5}$  & $2.83\cdot10^{6}$ & $0.3\times$ & $3.06\cdot10^{6}$ & $0.3\times$\\
$2.70\cdot10^{-1}$ & $4.92\cdot10^{6}$  & $3.02\cdot10^{6}$ & $1.6\times$ & $4.43\cdot10^{6}$ & $1.1\times$\\
$1.46\cdot10^{-1}$ & $3.28\cdot10^{7}$  & $9.47\cdot10^{6}$ & $3.5\times$ & $2.14\cdot10^{7}$ & $1.5\times$\\
$7.85\cdot10^{-2}$ & $3.60\cdot10^{8}$  & $3.75\cdot10^{7}$ & $9.6\times$ & $1.61\cdot10^{8}$ & $2.2\times$\\
$4.23\cdot10^{-2}$ & $3.28\cdot10^{9}$  & $1.67\cdot10^{8}$ & $19.6\times$ & $8.07\cdot10^{8}$ & $4.1\times$\\
$2.28\cdot10^{-2}$ & $3.53\cdot10^{10}$ & $7.69\cdot10^{8}$ & $46.0\times$ & $3.25\cdot10^{9}$ & $10.9\times$\\
\hline
\end{tabular}
\caption{Compute in flops to reach a target loss, median over ten seeds: the numbers behind Figure~\ref{fig:forms}. At the loosest target
the telescope pays its overhead for nothing in both forms, and both cross near
$L/L_0=0.3$.}
\label{tab:costs}
\end{table}

\begin{remark}[The measured rate is not the intrinsic one]\label{rem:gammamin}
The truncation hierarchy obeys a scaling law of rate $\gamma=2\varphi/(2-\varphi)$ over the
range $k\le n$, and that is what Section~\ref{sec:lsq} measures. It is not the intrinsic
rate: $\nabla L(X)=A^\top(AX-b)$ is affine and computable exactly at a cost independent of
the accuracy, so $C(\nabla L,\varepsilon)$ is bounded and $\norm{\nabla L}_{M^\gamma}<\infty$
for every $\gamma>0$, whence $\gamma_{\min}(\nabla L)=0$. The instance is a measuring device
for the scaling law of a given family, not an example of an intrinsically expensive gradient.
\end{remark}

\subsection{The reach of the measurement}
\label{app:part-reach}

\subsubsection{Where the measurement stops}
\label{app:limits}

Four things bound the range over which the exponents above may be read, and each is
quantified rather than asserted.

\paragraph{The loss floor.} The iterates never leave the first $k_{\max}$ coordinates, where
the problem is overdetermined, so its minimum $L^\ast$ is not zero even though the full
problem interpolates. The formula of Appendix~\ref{app:instance} gives
$k_{\max}^{-2/\gamma}(m-k_{\max})/m=1.2\%$ of $L_0$, and solving the restricted problem
exactly returns $0.9\%$ on average over the ten seeds, ranging from $0.3\%$ to $2.0\%$. Since the loss reported is $L$ and not $L-L^\ast$, the targets must stay above that floor:
the dashed line on the left panel of Figure~\ref{fig:forms} is the largest of the ten, and the sixteen targets
above it are within reach of every seed where the four below are not.

\paragraph{The column crossing.} Part (b) of Theorem~\ref{thm:main} improves on part (a)
only for $\varepsilon\le\mu d_0^2$, and below that the predicted slope is bounded by $\gamma/2$ for
both methods rather than $\gamma+1$ and $\gamma$, so a single line fitted across it would
mix two regimes. This is why the targets stop above $\mu d_0^2$ (the dashed line on
Figure~\ref{fig:forms}, left), which the loss floor coincides with on this design: what is
read off there is governed by part (a) of Theorem~\ref{thm:main} alone.

\paragraph{The finite design.} Truncating the design at a finite $n$ understates the bias of
the finest level by a relative $(n/k_{\max})^{1-2/\varphi}$, which steepens the fitted line
and lowers $\gamma$. The prediction is testable, and it holds (three seeds):
\begin{center}
\setlength{\tabcolsep}{9pt}
\begin{tabular}{lccc}
\hline
$n/k_{\max}$ & $64$ & $256$ & $1024$\\
$\gamma$ measured & $2.888$ & $2.945$ & $2.968$\\
gap to $3.00$ & $-3.7\%$ & $-1.8\%$ & $-1.1\%$\\
\hline
\end{tabular}
\end{center}
The measured exponent is therefore not merely close to the closed form: the residual
deviation follows the predicted law and shrinks as its cause is removed. The run uses
$n=256\,k_{\max}$, with this table as the check.

\paragraph{The budget.} A fixed-level exponent measured far below $\gamma+1$ can also mean
the frontier ran out of budget. At the tightest target the analysis asks for some
$4\cdot10^5$ steps where the run takes $2.5\cdot10^5$. That count is an upper bound and
every seed still reaches at least sixteen of the twenty targets, so it is loose here, but
the margin is thin and the check at one fifth of the budget, which returns $3.31$, shows
the exponent still moving with it. Both causes lower the exponent, so a run must be
budgeted before its baseline ladder can be trusted.

\subsubsection{What is established, and what is not}
\label{app:verdict}

\paragraph{What is established.} The exponent of the hierarchy is measured at
$\gamma_{\mathrm{env}}=2.963\pm0.007$ against a closed-form $3$, with $R^2\ge0.999$ on every
seed. Moreover, the multilevel method has a strictly smaller cost exponent than the
fixed level on all ten seeds and in both truncations, by $0.682\pm0.083$ with the one the
runs use and $1.254\pm0.054$ with the one the rate is read off. That ordering, and not
either exponent on its own, is what the comparison is for.

\paragraph{What is not.} The absolute value of either exponent cannot be measured exactly. We measure
$3.438\pm0.093$ where the theorem allows $3.963$, and $2.184\pm0.097$ where it allows
$2.963$: deficits of $0.52$ and $0.78$, or $5.6$ and $8.1$ standard errors.

Remark~\ref{rem:local} accounts for them. The bias of a level factorises as
$\psi(X)\,k^{-1/\gamma}$, the power belonging to the design and the prefactor
$\psi(X)=\norm{r(X)}=\sqrt{2L(X)}$ to the point at which the level is called.
Assumption~\ref{ass:dyadic} needs that prefactor constant, so it takes the largest value
on the ball. A descent does not stay where it is largest, and what a run pays is an
average of $\psi$ along its trajectory.

Being a square root of the loss, $\psi$ turns that saving into a power of the target. A
power moves a slope without bending the line, which is why the frontiers stay straight and
only their exponents fall, and the exponent $1/2$ caps how far they can fall, at
$\gamma/2=1.48$. Both methods call the same oracle, so both collect the saving. What
separates them is the iteration count, which belongs to the optimizer and not to the
oracle.

\begin{remark}[The prefactor is local]\label{rem:local}
The residual is affine, $r(X)=A(X-X^\ast)+r^\ast$ with $r^\ast=AX^\ast-b$, so the bound
$\norm{\nabla L-A_{\le k}^\top r}\le\norm{R_k}\norm{r(X)}$ holds at every $X$, with a
prefactor $\norm{A}\norm{X-X^\ast}+\norm{r^\ast}$ affine in the distance to the optimum.
The ball of Proposition~\ref{prop:lsq} is only where that prefactor is frozen into a
constant, and freezing it costs nothing: Theorem~\ref{thm:main} keeps the iterates within
$O(d_0)$ of $X^\ast$, so $R$ may be taken there and $M\asymp\norm{A}d_0+\norm{r^\ast}$.
Carried through the proof, the local form replaces $c^\gamma$ by its average along the
trajectory rather than its supremum.
\end{remark}

\paragraph{And what the bound itself would look like.} No curve in Figure~\ref{fig:forms} is a
theoretical one, because none can be drawn on those axes: with $\Lambda_\gamma=2.6\cdot
10^{4}$ and $M$ frozen at the radius of the ball, Theorem~\ref{thm:main} sits
orders of magnitude above the compute actually spent. While the experiment tests an exponent and
a separation, it does not test a constant and it is not offered as doing so.

\end{document}

%% file: math_commands.tex
\usepackage{amsmath,amsfonts,bm}

\def\eqref#1{equation~\ref{#1}}
\def\1{\bm{1}}

\DeclareMathAlphabet{\mathsfit}{\encodingdefault}{\sfdefault}{m}{sl}
\SetMathAlphabet{\mathsfit}{bold}{\encodingdefault}{\sfdefault}{bx}{n}

\newcommand{\E}{\mathbb{E}}

\newcommand{\R}{\mathbb{R}}

\newcommand{\Var}{\mathrm{Var}}